\documentclass[11pt]{amsart}
\usepackage{amssymb}
\usepackage{tikz,tikz-cd}
\usetikzlibrary{intersections}
\usepackage{ytableau}
\usepackage[T1]{fontenc}\usepackage{palatino} 
\usepackage[mathscr]{euscript}
\usepackage{stmaryrd} 
\usepackage[pagebackref]{hyperref} 
\hypersetup{colorlinks,linkcolor=blue,urlcolor=blue,citecolor=blue,%
  linktocpage}  
\usepackage[abbrev,nobysame]{amsrefs} 


\newcommand{\Part}[1]{
 \foreach \x [count=\s from 1] in {#1}{
 	{\ifnum\s=1
		\draw (0,\s-1)--(\x,\s-1); 
		\fi}
   \draw (0,\s) to (\x,\s);
   \foreach \y in {0, ..., \x} {\draw (\y,\s)--(\y,\s-1);}
 }}

\def\UNIT{.18} \newcommand{\PART}[1]{
\begin{tikzpicture}[xscale=\UNIT, yscale=-\UNIT] 
	\Part{#1}
\end{tikzpicture}
}
\newcommand{\epic}{
\begin{tikzpicture}[scale = 0.35,thick, baseline={(0,-1ex/2)}]
\tikzstyle{vertex} = [shape = circle, minimum size = 4pt, inner sep = 1pt] 
\node[vertex] (G--2) at (1.5, -1) [shape = circle, draw,fill=black] {}; 
\node[vertex] (G--1) at (0.0, -1) [shape = circle, draw,fill=black] {}; 
\node[vertex] (G-1) at (0.0, 1) [shape = circle, draw,fill=black] {}; 
\node[vertex] (G-2) at (1.5, 1) [shape = circle, draw,fill=black] {}; 
\draw (G--2) .. controls +(-0.5, 0.5) and +(0.5, 0.5) .. (G--1); 
\draw (G-1) .. controls +(0.5, -0.5) and +(-0.5, -0.5) .. (G-2); 
\end{tikzpicture} 
}

\newcommand{\onepic}{
\begin{tikzpicture}[scale = 0.35,thick, baseline={(0,-1ex/2)}] 
\tikzstyle{vertex} = [shape = circle, minimum size = 4pt, inner sep = 1pt] 
\node[vertex] (G--1) at (0.0, -1) [shape = circle, draw,fill=black] {}; 
\node[vertex] (G-1) at (0.0, 1) [shape = circle, draw,fill=black] {}; 
\draw (G-1) .. controls +(0, -1) and +(0, 1) .. (G--1); 
\end{tikzpicture}
}

\swapnumbers 
\newtheorem{thm}{Theorem}[section]
\newtheorem*{thm*}{Theorem}
\newtheorem{lem}[thm]{Lemma}
\newtheorem*{lem*}{Lemma}
\newtheorem{prop}[thm]{Proposition}
\newtheorem*{prop*}{Proposition}
\newtheorem{cor}[thm]{Corollary}
\newtheorem*{cor*}{Corollary}

\newtheorem*{conj*}{Conjecture}

\theoremstyle{definition}
\newtheorem{defn}[thm]{Definition}
\newtheorem*{defn*}{Definition}
\newtheorem{example}[thm]{Example}
\newtheorem*{example*}{Example}
\newtheorem{rmk}[thm]{Remark}
\newtheorem*{rmk*}{Remark}

\newtheorem*{que*}{Question}
\newtheorem{algo}[thm]{Algorithm}
\newtheorem*{algo*}{Algorithm}

\newcommand{\C}{\mathbb{C}} 
\newcommand{\R}{\mathbb{R}} 
\newcommand{\Q}{\mathbb{Q}} 
\newcommand{\Z}{\mathbb{Z}} 
\newcommand{\N}{\mathbb{N}} 
\newcommand{\K}{\mathbb{K}} 
\newcommand{\UU}{\mathbf{U}} 
\newcommand{\A}{\mathbb{A}} 
\newcommand{\TL}{\operatorname{TL}} 
\newcommand{\End}{\operatorname{End}} 
\newcommand{\Hom}{\operatorname{Hom}} 
\newcommand{\gl}{\mathfrak{gl}} 
\newcommand{\fsl}{\mathfrak{sl}} 
\newcommand{\bil}[2]{\langle #1, #2 \rangle}

\newcommand{\D}{\mathscr{D}} 
\newcommand{\Cat}{\operatorname{LW}} 
\newcommand{\HH}{\mathbf{H}} 
\newcommand{\Sym}{\mathfrak{S}} 

\newcommand{\dbracket}[1]{\llbracket #1 \rrbracket} 

\newcommand{\tr}{\operatorname{tr}} 
\newcommand{\Tr}{\operatorname{Tr}} 
\newcommand{\E}{\mathcal{E}} 
\newcommand{\ZZ}{\mathcal{A}} 

\renewcommand{\labelenumi}{(\alph{enumi})}
\parskip=2pt

\allowdisplaybreaks

\title[Origins of the Temperley--Lieb algebra]%
      {Origins of the Temperley--Lieb algebra: early history}
\author{Stephen Doty}
\email{doty@math.luc.edu}
\address{\parbox{\linewidth}{Department of Mathematics and Statistics,
  Loyola University Chicago,\\ Chicago, IL 60660 USA}}
\author{Anthony Giaquinto}
\email{tonyg@math.luc.edu}
\address{\parbox{\linewidth}{Department of Mathematics and Statistics,
  Loyola University Chicago,\\ Chicago, IL 60660 USA}}
\subjclass{Primary 16T30, 16G99, 17B37}
\keywords{Diagram algebras, Schur--Weyl duality,
  Temperley--Lieb algebras, quantized enveloping algebras}
\begin{document}
\begin{abstract}\noindent
We give an historical survey of some of the original basic algebraic
and combinatorial results on Temperley--Lieb algebras, with a focus on
certain results that have become folklore.
\end{abstract}
\maketitle
\tableofcontents

\section{Introduction}\noindent
The Temperley--Lieb algebra $\TL_n(\delta)$ was was introduced in
\cite{TL} in connection with certain problems in mathematical physics.
It reappeared in the 1980s as a certain von Neumann algebra in the
spectacular work of Vaughan Jones
\cites{Jones:83,Jones:85,Jones:86,Jones,Jones:91} on subfactors and
knots.  Kauffman \cites{K:87,K:88,K:90} (see also \cite{Kauffman})
realized it as a diagram algebra. Birman and Wenzl \cite{Birman-Wenzl}
showed that it is isomorphic to a subalgebra of the Brauer algebra
\cite{Brauer}; this also follows from Kauffman's results.

This paper is an historical survey of the most fundamental algebraic
and combinatorial results on these algebras and their
representations. When writing \cites{DG:PTL,DG:orthog}, we found it
challenging to track down original references for various basic
results that have become folklore. The purpose of this paper is to
document what we found, hopefully aiding subsequent researchers
working in this area. Our focus is somewhat different from that of the
excellent survey article \cite{Ridout-StAubin}. Furthermore, we wish
to draw attention to the book \cite{GHJ}, a reference which seems to
be almost universally ignored by authors writing papers in this area.
Since our focus is on early history, we do not attempt to survey more
recent important developments such as categorification
\cites{BFK,Stroppel,FKS} or the many generalizations of
Temperley--Lieb algebras that exist in abundance in the literature.

In the early papers the ground field was always the complex field
$\C$. The book \cite{GHJ} replaces $\C$ by an arbitrary field
$\K$, which is sometimes assumed to be of characteristic
zero. Many basic properties of Temperley--Lieb algebras are valid even
more generally, where the field $\K$ is replaced by an
arbitrary (unital) commutative ring, and we work in that context
whenever possible.

Most of the results in this survey appeared prior to the turn of the
millennium. A notable exception is the new algorithm (due to Chris
Bowman, but formulated somewhat differently here) discussed in
\S\ref{s:skew}; this may be our only new result. We also include a
short proof of Schur--Weyl duality for the non-semisimple case in
\S\ref{s:SWD}; this has always been implicit in the literature but to
our knowledge has not been spelled out anywhere.

\section{The Temperley--Lieb algebra}\label{s:1}\noindent
In this section $\Bbbk$ is a commutative ring with $1$. The
Temperley--Lieb algebra appeared originally in \cite{TL} in connection
with the Potts model in mathematical physics. For any positive integer
$n$ and any element $\delta$ in the ground ring $\Bbbk$,
$\TL_n(\delta)$ is the unital $\Bbbk$-algebra defined by generators
$e_1, \dots, e_{n-1}$ subject to the relations
\begin{equation}\label{e:TL}
  \begin{aligned}
  e_i^2 = \delta e_i, \quad
  e_i e_j e_i = e_i \text{ if } |i-j|=1, \quad
  e_i e_j = e_j e_i \text{ if } |i-j|>1 .
\end{aligned}
\end{equation}
The unit element $1$ of the algebra is identified with the empty
product of generators.  There is an algebra isomorphism $\TL_n(\delta)
\cong \TL_n(-\delta)$ defined on generators by $e_i \mapsto -e_i$.

\begin{rmk}
The above description by generators and relations is not explicitly
given in \cite{TL}, but can be found, for instance, in the book
\cite{Baxter}.
\end{rmk}

Our first task is to show that $\TL_n = \TL_n(\delta)$ has a finite
spanning set over $\Bbbk$ (hence is finite-dimensional if $\Bbbk$ is a
field). We follow an argument sketched in Jones \cite{Jones:83}.
Words $w$, $w'$ in the generators $e_1, \dots, e_{n-1}$ are
\emph{equivalent} (written as $w_1 \sim w_2$) if they are equal up to
a factor which is a power of $\delta$.  Say that a word $w = e_{i_1}
\cdots e_{i_l}$ in $\TL_n$ is \emph{reduced} if it has minimal
possible length in its equivalence class.


\begin{lem}[Jones' Lemma]
  If $w = e_{i_1} \cdots e_{i_l}$ is a reduced word in $\TL_n$ then $m
  := \max\{i_1, \dots, i_l\}$ occurs only once in the sequence $(i_1,
  \dots, i_l)$.
\end{lem}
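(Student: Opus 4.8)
The plan is to argue by contradiction: if the maximum $m$ occurred at least twice in a reduced word $w$, I would produce a strictly shorter equivalent word. Writing $|u|$ for the length of a word $u$, first I would locate two \emph{consecutive} occurrences of $e_m$, say at positions $p<q$ with no $e_m$ strictly between them, and isolate the corresponding factor $e_m M e_m$, where $M = e_{i_{p+1}}\cdots e_{i_{q-1}}$ is the intervening subword. Because these occurrences are consecutive and $m$ is the maximum, every index appearing in $M$ is at most $m-1$; this is the structural fact that makes the whole reduction possible.

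The heart of the argument is the following claim, which I would isolate as a sublemma: \emph{if $M$ is any word in $e_1,\dots,e_{m-1}$, then $e_m M e_m \sim V$ for some word $V$ with $|V|\le |M|+1$.} Granting this, the factor $e_m M e_m$ (of length $|M|+2$) may be replaced by the strictly shorter equivalent $V$, yielding a word $\sim w$ of smaller length and contradicting the fact that $w$ is reduced. Thus the entire Lemma reduces to the sublemma.

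I would prove the sublemma by strong induction on $|M|$, uniformly in $m$. If $M$ contains no $e_{m-1}$ then $M$ commutes with $e_m$, so $e_m M e_m = \delta\, M e_m \sim M e_m$, of length $|M|+1$. If $M$ does contain $e_{m-1}$, write $M = A\,e_{m-1}\,B$ where $A$ is the part before the first $e_{m-1}$ (hence has indices $\le m-2$ and commutes with $e_m$), so $e_m M e_m = A\,e_m e_{m-1} B\, e_m$. When $B$ has no further $e_{m-1}$, it too commutes with $e_m$, and $e_m e_{m-1} B e_m = e_m e_{m-1} e_m B = e_m B$ by the relation $e_m e_{m-1} e_m = e_m$; this gives a word of length $|M|$, well within the bound.

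The one delicate case—and the main obstacle—is when $e_{m-1}$ occurs in $M$ at least twice. Then I would split off the first two occurrences, $M = A\,e_{m-1}\,B'\,e_{m-1}\,B''$ with $A,B'$ having indices $\le m-2$, and attack the inner factor $e_{m-1} B' e_{m-1}$: since $B'$ uses only $e_1,\dots,e_{m-2}$ and is strictly shorter than $M$, the inductive hypothesis (applied at level $m-1$) replaces it by some $V'$ with $|V'|\le |B'|+1$. Substituting back leaves $A\, e_m (V'B'')\, e_m$, in which $V'B''$ is again strictly shorter than $M$, so a second application of the hypothesis (now at level $m$) completes the reduction; a brief length count gives final length at most $|A|+|B'|+|B''|+2 = |M|$. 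The point requiring care is exactly this bookkeeping: one must verify that \emph{both} invocations are on words strictly shorter than $M$ (so that strong induction genuinely applies) and that the leftover prefix $A$ does not inflate the final length beyond $|M|+1$.
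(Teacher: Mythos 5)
Your proof is correct in substance but follows a genuinely different route from the paper's. The paper inducts on the length of the reduced word $w$ itself: after locating consecutive occurrences $w = w_1 e_m w_2 e_m w_3$, it observes that $w_2$ is reduced (a subword of a reduced word is reduced, else substitution would shorten $w$), so the lemma itself---as inductive hypothesis applied to $w_2$, whose maximal index is $m-1$---forces $w_2 = w_4 e_{m-1} w_5$ with $w_4,w_5$ words on $e_1,\dots,e_{m-2}$; commuting these past $e_m$ and using $e_m e_{m-1} e_m = e_m$ then shortens $w$. You instead prove a standalone quantitative sublemma valid for \emph{arbitrary} (not necessarily reduced) words $M$ in $e_1,\dots,e_{m-1}$, namely $e_m M e_m \sim V$ with $|V|\le |M|+1$, by strong induction on $|M|$ uniformly in $m$, and then deduce the lemma with no induction on $w$ at all. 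Your version avoids the ``subwords of reduced words are reduced'' observation and yields an explicit length bound applying beyond reduced words; the price is a heavier case analysis (two nested invocations of the hypothesis), where the paper gets the key structure of $w_2$ for free from reducedness plus the inductive hypothesis.

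One point needs tightening. In your case where $e_{m-1}$ occurs at least twice, the second application of the sublemma (at level $m$, to $e_m (V'B'') e_m$) requires $V'B''$ to be a word in $e_1,\dots,e_{m-1}$; but the sublemma as you stated it only guarantees $V' \sim e_{m-1}B'e_{m-1}$ and $|V'|\le |B'|+1$, and equivalence alone does not control which generators appear in $V'$. The fix is to strengthen the sublemma's conclusion to ``$V$ may be taken to be a word in $e_1,\dots,e_m$'' (equivalently, with maximal index at most $m$), a property your construction visibly delivers in all three cases: $Me_m$ in the first, $Ae_mB$ in the second, and $AV''$ in the third, where $V''$ is supplied by the strengthened hypothesis. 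With that clause carried through the induction, the argument is complete.
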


\begin{proof}
This is proved by induction on the length. The base case is trivial.
Let $w$ be a reduced word. Suppose for contradiction that $e_m$
appears at least twice in $w$, where $m$ is the maximal index that
appears. Then $w = w_1e_mw_2e_mw_3$. We may assume that $w_2$ does not
contain $e_m$.

If $w_2$ does not contain $e_{m-1}$ then $e_m$ commutes with all the
$e_i$ appearing in $w_2$, so after commuting the rightmost $e_m$ to
the left of $w_2$, the length of $w$ can be shortened using the
equivalence $e_m^2 \sim e_m$. Contradiction.

The remaining case is that $w_2$ contains $e_{m-1}$. Now $w_2$ is
reduced since $w$ is. By the inductive hypothesis, $w_2 = w_4 e_{m-1}
w_5$ where $w_4$, $w_5$ are words on $e_1, \dots, e_{m-2}$. Thus $w_4$
can be commuted to the left and $w_5$ to the right, and the length of
$w$ can once again be shortened using the equivalence $e_m e_{m-1} e_m
\sim e_m$. Contradiction.
\end{proof}

It follows immediately from Jones' Lemma by induction on $n$ that
there are only finitely many reduced words in $\TL_n$, hence that
$\TL_n$ has finite spanning set over $\Bbbk$.

By Jones' Lemma, if $w$ is a reduced word in which $e_m$ is the
generator of maximum index, then by commuting $e_m$ as far to the
right as possible, and commuting subsequent generators of smaller
index as far to the left as possible, we have
\[
w = w' (e_m e_{m-1} \cdots e_{m-l}), \qquad l\ge 0
\]
where $w'$ is a reduced word in which the generator of maximum index
is strictly smaller than $m$. If necessary, reduce $e_{m-1} e_m
e_{m-1}$ to $e_{m-1}$. Induction then leads to the following.

\begin{thm}[Jones' normal form]\label{t:Jones}
  Any reduced word $w$ in $\TL_n$ may be written in the form
  \[
  w = (e_{j_1}e_{j_1-1} \cdots e_{k_1}) (e_{j_2}e_{j_2-1} \cdots
  e_{k_2}) \cdots (e_{j_r}e_{j_r-1} \cdots e_{k_r})
  \]
  where $0 < j_1 < \cdots < j_r < n$, $0 < k_1 < \cdots < k_r < n$,
  and $j_i \ge k_i$ for all $i$. The index $j_r$ is the maximum index
  appearing in $w$.
\end{thm}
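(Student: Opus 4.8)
The plan is to prove Jones' normal form by induction on $n$, the number of strands, using the decomposition that the paper has already set up immediately before the statement. The key structural fact, provided by Jones' Lemma, is that in any reduced word the maximum index $m$ appears exactly once. The paper has shown that by commuting that unique $e_m$ rightward and pulling the smaller-index generators to its right back leftward, any reduced word factors as $w = w'(e_m e_{m-1} \cdots e_{m-l})$ for some $l \ge 0$, where $w'$ is a reduced word whose maximum index is strictly less than $m$.

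First I would set up the induction carefully. The base case $n=2$ is immediate, since $\TL_2$ has only the single generator $e_1$ and every reduced word is either empty or $e_1$, both trivially in the asserted form with $r \le 1$. For the inductive step, I would assume the theorem holds for all Temperley--Lieb algebras on fewer than $n$ strands, and let $w$ be an arbitrary reduced word in $\TL_n$ with maximum index $m < n$. Applying the displayed decomposition, write $w = w'(e_m e_{m-1} \cdots e_{k_r})$, where I rename $m-l =: k_r$ and note $j_r := m$, so the rightmost block already has the required descending-run shape with $j_r \ge k_r$. The crucial observation is that $w'$ is a reduced word all of whose generators have index at most $m-1 \le n-2$, so $w'$ lives (as a reduced word) in the subalgebra generated by $e_1, \dots, e_{m-1}$, which is a copy of $\TL_m \subseteq \TL_{n-1}$; the inductive hypothesis therefore applies to $w'$.

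By the inductive hypothesis, $w'$ factors as a product of descending runs
\[
w' = (e_{j_1}e_{j_1-1}\cdots e_{k_1})\cdots (e_{j_{r-1}}e_{j_{r-1}-1}\cdots e_{k_{r-1}})
\]
with $0 < j_1 < \cdots < j_{r-1}$, $0 < k_1 < \cdots < k_{r-1}$, and $j_i \ge k_i$; moreover $j_{r-1}$ is the maximum index of $w'$, hence $j_{r-1} < m = j_r$. Appending the final block recovers the full factorization of $w$, and it remains only to verify the inequalities linking the last block to the rest. The strict inequality $j_{r-1} < j_r$ is exactly the statement that $w'$ has maximum index strictly smaller than $m$, which is built into the decomposition, so the chain $0 < j_1 < \cdots < j_r < n$ holds. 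Likewise $j_r \ge k_r$ holds by construction of the last block.

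The one step that genuinely requires care — and which I expect to be the main obstacle — is verifying the strict inequality $k_{r-1} < k_r$ for the bottoms of the runs, since this is the only inequality not handed to us for free by either the inductive hypothesis or the construction of the final block. I would argue that if instead $k_r \le k_{r-1}$, then the starting index of the last run would reach down to $k_r \le k_{r-1} \le j_{r-1} = m-1$ or lower, meaning the last block contains a generator $e_{k_r}, e_{k_r+1}, \dots$ overlapping the range of the penultimate block; one then uses the braid-type relation $e_i e_{i\pm 1} e_i = e_i$ together with the commutation relations to find a length-reducing rewrite of $w$, contradicting reducedness. Making this rewriting argument precise is the delicate point, and I would want to isolate it as the combinatorial heart of the induction rather than treat it as routine.
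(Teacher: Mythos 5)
Your proposal is correct and follows essentially the same route as the paper, which likewise uses Jones' Lemma to peel off the top descending run $w = w'(e_m e_{m-1}\cdots e_{m-l})$ and then simply says ``Induction then leads to the following,'' suppressing all further detail. The one step you flag as delicate, namely $k_{r-1} < k_r$, is precisely what the paper glosses over, and it closes exactly as you sketch: if $k_r \le k_{r-1}$, then the last run contains the consecutive pair $e_{k_{r-1}+1}e_{k_{r-1}}$, the trailing $e_{k_{r-1}}$ of the penultimate run commutes rightward past $e_{j_r},\dots,e_{k_{r-1}+2}$ (all of index at least $k_{r-1}+2$), and the relation $e_{k_{r-1}}e_{k_{r-1}+1}e_{k_{r-1}} = e_{k_{r-1}}$ then shortens $w$ by two, contradicting reducedness.
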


\begin{rmk}\label{r:dual-JNF}
By interchanging right and left in the above argument, we obtain a
``dual'' version of the Jones normal form in which the inequalities
are reversed. Details are left to the reader.
\end{rmk}

To each word in Jones normal form as above we may associate a
piecewise linear increasing (planar) path from $(0,0)$ to $(n,n)$:
\begin{multline*}
(0,0) \to (j_1,0) \to (j_1,k_1) \to (j_2,k_1) \to (j_2,k_2) \to \cdots
\\ \to (j_r,k_{r-1}) \to (j_r,k_r) \to (n,k_r) \to (n,n)
\end{multline*}
on the integer lattice $\Z \times \Z$ which does not cross the
diagonal.  Such paths are known as \emph{Dyck paths}.  For instance,
the word $(e_3e_2e_1)(e_4e_3)(e_5e_4)$ in $\TL_6$ corresponds to the
Dyck path
\[
\begin{tikzpicture}[scale = 0.5,thick, baseline={(0,-1ex/2)}] 
  \draw[very thick]
  (0,0)--(3,0)--(3,1)--(4,1)--(4,3)--(5,3)--(5,4)--(6,4)--(6,6);
  \draw[step=1,dashed,gray,very thin] (0,0) grid (6,6);
\end{tikzpicture}
\]
from $(0,0)$ in the lower left corner to $(6,6)$ in the upper right
corner. This map from words to Dyck paths is a bijection because
each such walk is determined by its corner points, and the normal form
of the word can be reconstructed from the coordinates of those points.

To count the number of Dyck paths to $(n,n)$ it is useful to consider
a slightly more general question. For any integer $p$ satisfying $0
\le 2p \le n$, define a \emph{lattice walk} to $(n-p,p)$ to be a
piecewise linear increasing path from $(0,0)$ to $(n-p,p)$ on the
integer lattice $\Z \times \Z$ which does not cross the diagonal. In
particular, Dyck paths are lattice walks to $(n,n)$. Let
\[
\Cat_{n,\,p} =
\begin{minipage}{3.2in}
number of lattice walks from $(0,0)$ to $(n-p,p)$.
\end{minipage}
\]
In this notation, $\Cat_{2n,n}$ gives the number of Dyck paths to
$(n,n)$. Any lattice walk to $(n-p,p)$ must pass through either
$(n-p-1,p)$ or $(n-p,p-1)$, so
\begin{equation}\label{e:recurrence}
  \Cat_{n,\,p} = \Cat_{n-1,\,p} + \Cat_{n-1,\,p-1}
\end{equation}
where $\Cat_{n,-1} = 0$.  We clearly have $\Cat_{n,\,0} = 1$. Also,
$\Cat_{2p-1,\,p} = 0$ as walks are not allowed to cross the
diagonal. With these boundary conditions the recurrence
\eqref{e:recurrence} is easily solved, giving the formula
\begin{equation}\label{e:Cnp}
  \Cat_{n,\,p} = \binom{n}{p}-\binom{n}{p-1}
\end{equation}
where we interpret $\binom{n}{-1}$ as zero, as usual. The set of words
in normal form spans $\TL_n$, and the $n$th Catalan number
\begin{equation}\label{e:upper-bound}
  \Cat_{2n,n} = \binom{2n}{n}-\binom{2n}{n-1} =
  \frac{1}{n+1}\binom{2n}{n}
\end{equation}
gives its cardinality. Linear independence of words in normal form
will be proved in the next section, thus showing that the set of such
words is in fact a basis of $\TL_n$ and thus $\TL_n$ is free as a
$\Bbbk$-module, of rank $\Cat_{2n,n}$.


\section{Diagrammatics}\label{s:dia}\noindent
We continue to work over an arbitrary commutative ring $\Bbbk$, where
$\delta$ is a fixed element of $\Bbbk$.  Following Kauffman, we now
introduce a diagram algebra $\D_n(\delta)$, based on planar diagrams
called $n$-diagrams.  It will turn out that $\D_n(\delta)$ is
isomorphic to $\TL_n(\delta)$.

An $n$-\emph{diagram} is a planar graph, with $2n$ vertices consisting
of $n$ marked points on each of two parallel lines. Each point in the
graph is the endpoint of precisely one edge, and the edges can be
drawn by non-intersecting arcs which lie entirely between the
lines. If we label the vertices along one line by the set $\mathbf{n}
= \{1, \dots, n\}$ and by $\mathbf{n}' = \{1', \dots, n'\}$
correspondingly along the other line, where $\mathbf{n} \cap
\mathbf{n}' = \emptyset$, then we may identify an $n$-diagram $D$ with
a set partition $\{B_1, \dots, B_n\}$ of $\mathbf{n} \sqcup
\mathbf{n}'$ in which each subset (block) has cardinality two. For
example, the $8$-diagram
\[
D \;=\;
\begin{tikzpicture}[scale = 0.35,thick, baseline={(0,-1ex/2)}] 
  \tikzstyle{vertex} = [shape = circle, minimum size = 4pt, inner sep = 1pt,
    fill=black] 
\node[vertex] (G--8) at (10.5, -1) [shape = circle, draw] {}; 
\node[vertex] (G--7) at (9.0, -1) [shape = circle, draw] {}; 
\node[vertex] (G--6) at (7.5, -1) [shape = circle, draw] {}; 
\node[vertex] (G-8) at (10.5, 1) [shape = circle, draw] {}; 
\node[vertex] (G--5) at (6.0, -1) [shape = circle, draw] {}; 
\node[vertex] (G--2) at (1.5, -1) [shape = circle, draw] {}; 
\node[vertex] (G--4) at (4.5, -1) [shape = circle, draw] {}; 
\node[vertex] (G--3) at (3.0, -1) [shape = circle, draw] {}; 
\node[vertex] (G--1) at (0.0, -1) [shape = circle, draw] {}; 
\node[vertex] (G-1) at (0.0, 1) [shape = circle, draw] {}; 
\node[vertex] (G-2) at (1.5, 1) [shape = circle, draw] {}; 
\node[vertex] (G-7) at (9.0, 1) [shape = circle, draw] {}; 
\node[vertex] (G-3) at (3.0, 1) [shape = circle, draw] {}; 
\node[vertex] (G-4) at (4.5, 1) [shape = circle, draw] {}; 
\node[vertex] (G-5) at (6.0, 1) [shape = circle, draw] {}; 
\node[vertex] (G-6) at (7.5, 1) [shape = circle, draw] {}; 
\draw[] (G--8) .. controls +(-0.5, 0.5) and +(0.5, 0.5) .. (G--7); 
\draw[] (G-8) .. controls +(-1, -1) and +(1, 1) .. (G--6); 
\draw[] (G--5) .. controls +(-0.8, 0.8) and +(0.8, 0.8) .. (G--2); 
\draw[] (G--4) .. controls +(-0.5, 0.5) and +(0.5, 0.5) .. (G--3); 
\draw[] (G-1) .. controls +(0, -1) and +(0, 1) .. (G--1); 
\draw[] (G-2) .. controls +(1, -1) and +(-1, -1) .. (G-7); 
\draw[] (G-3) .. controls +(0.5, -0.5) and +(-0.5, -0.5) .. (G-4); 
\draw[] (G-5) .. controls +(0.5, -0.5) and +(-0.5, -0.5) .. (G-6); 
\end{tikzpicture}
\]
corresponds to the set partition
\[
\{ \{1,1'\}, \{2,7\}, \{3,4\},
\{5,6\}, \{8,6'\}, \{2',5'\}, \{3',4'\}, \{7',8'\} \}.
\]
In the literature, edges in $n$-diagrams are also called
\emph{strands} or \emph{links}. Some authors refer to edges connecting
two vertices in the top or bottom row as \emph{cups} or \emph{caps},
respectively, and to edges connecting a top vertex to a bottom one as
\emph{through strings} or \emph{propagating strands}.

We define a multiplication on the set of $n$-diagrams as follows. If
$D_1$, $D_2$ are given $n$-diagrams, we stack $D_1$ on top of $D_2$,
identifying the middle lines and their vertices. This results in a
graph with zero or more loops in the middle, and we define
\begin{equation}\label{e:mult-rule}
D_1 D_2 = \delta^L \, D_3
\end{equation}
where $L$ is the number of loops and $D_3$ is the $n$-diagram obtained
by removing the middle data (lines, loops, and vertices). Write
\[
\D_n(\delta) = \Bbbk\text{-linear span of the set of all $n$-diagrams}.
\]
With the multiplication rule given above, $\D_n(\delta)$ is an
associative algebra over $\Bbbk$. The set of $n$-diagrams is a
$\Bbbk$-basis of $\D_n(\delta)$.

Let $\hat{e}_i= \{\{i,i+1\}, \{i',(i+1)'\}\} \sqcup \{ \{j,j'\}\mid j
\ne i, i+1 \}$ ($i = 1, \dots, n-1$) and $\hat{1}= \{ \{i,i'\}\mid i =
1, \dots, n \}$.
Then
\[
\hat{e}_i \;=\;
\onepic \cdots \onepic\quad \epic \quad\onepic \cdots \onepic
\]
and one checks from the multiplication rule \eqref{e:mult-rule} that
the $\hat{e}_i$ ($1 \le i \le n-1$) satisfy the defining relations
\eqref{e:TL} for $\TL_n(\delta)$. Moreover, $\hat{1} d = d = d \hat{1}$
for all $n$-diagrams $d$. It follows that there is a unique algebra
morphism
\begin{equation}\label{e:morphism}
  \sigma: \TL_n(\delta) \to \D_n(\delta)
\end{equation}
such that $\sigma(1) = \hat{1}$ and $\sigma(e_i) = \hat{e}_i$ for all $i
= 1, \dots, n-1$.

In order to count the number of $n$-diagrams, it is again fruitful to
consider a more general problem: counting the number of half-diagrams.
Cutting a diagram by a line halfway between (and parallel to) its
defining parallel lines divides the diagram into two half-diagrams. We
conventionally reflect the bottom half-diagram across its line of
marked points, so that all half-diagrams are oriented with the links
lying below the line.  A half-diagram coming from an $n$-diagram has
$p$ links (arcs connecting two vertices) and $n-2p$ defects (arcs with
one vertex), where $0 \le 2p \le n$.

\begin{lem}\label{l:half-count}
  $\Cat_{n,p} = $ the number of half-diagrams on $n$ vertices with $p$
  links.
\end{lem}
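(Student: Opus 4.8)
The plan is to show that the half-diagram counts satisfy exactly the same recurrence \eqref{e:recurrance} and boundary conditions as the numbers $\Cat_{n,p}$, and then to invoke uniqueness of the solution. First I would record the combinatorial description of half-diagrams that makes the recurrence transparent. Labelling the $n$ vertices $1, \dots, n$ from left to right, a half-diagram is a non-crossing partial matching of $\{1, \dots, n\}$ (the links) together with the unmatched vertices (the defects). The one planar constraint that I must extract is that no link may strictly enclose a defect: if $\{j,k\}$ is a link with $j < i < k$ and $i$ is a defect, then the strand emanating from $i$ cannot reach the cutting line without crossing the arc $\{j,k\}$, contradicting planarity. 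Conversely, any non-crossing partial matching with no enclosed defect can be realized as a half-diagram. Thus half-diagrams on $n$ vertices with $p$ links are exactly the non-crossing partial matchings of $\{1,\dots,n\}$ with $p$ arcs and no arc enclosing an unmatched point.

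Writing $H_{n,p}$ for the number of such objects, I would establish $H_{n,p} = H_{n-1,p} + H_{n-1,p-1}$ by conditioning on the rightmost vertex $n$. Being rightmost, vertex $n$ is either a defect or the right endpoint of a link. If it is a defect, deleting it is a bijection onto half-diagrams on $n-1$ vertices with $p$ links (a defect in the last position is never enclosed, so re-adding it is always legal); this accounts for the term $H_{n-1,p}$. If vertex $n$ is the right endpoint of a link $\{j, n\}$, then by the no-enclosed-defect property there are no defects strictly between $j$ and $n$, so deleting $n$ and re-interpreting $j$ as a defect produces a half-diagram on $n-1$ vertices with $p-1$ links in which $j$ is the rightmost defect. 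The inverse operation adjoins a new rightmost vertex and joins it to the current rightmost defect; this accounts for $H_{n-1,p-1}$. The crux of the argument, and the step I expect to need the most care, is verifying that this second operation is a genuine bijection --- in particular that the partner of $n$ must be the rightmost defect and that every vertex to its right is the endpoint of a link nested inside $\{j,n\}$ --- which is precisely where the planarity constraint is used.

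Finally I would match the boundary data. We have $H_{n,0} = 1$ (the all-defect diagram) and $H_{n,p} = 0$ whenever $2p > n$, in agreement with $\Cat_{n,0} = 1$ and $\Cat_{2p-1,p} = 0$, together with the conventions $H_{n,-1} = \Cat_{n,-1} = 0$. Since $H_{n,p}$ and $\Cat_{n,p}$ obey the same recurrence \eqref{e:recurrance} and the same initial conditions, an induction on $n$ gives $H_{n,p} = \Cat_{n,p}$ for all admissible $n, p$, which is the assertion of the lemma. Alternatively one could seek a direct bijection with the lattice walks counted by $\Cat_{n,p}$; but because a defect sits at ``bracket depth zero'' while a link opening does not, a naive left-to-right step encoding fails to be injective, so the recurrence route seems cleaner.
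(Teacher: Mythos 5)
Your main argument is correct, and it takes a genuinely different route from the paper. The paper proves the lemma by a direct bijection with lattice walks: reading a half-diagram from left to right, the walker moves up at the $k$th step if vertex $k$ closes a link and moves right otherwise, and the half-diagram is then reconstructed from the walk. You instead show that your counts $H_{n,p}$ satisfy the recurrence \eqref{e:recurrance} with the same boundary data ($H_{n,0}=1$, $H_{n,p}=0$ for $2p>n$, $H_{n,-1}=0$) by conditioning on the last vertex. Your two deletion maps are indeed bijections, and for the reason you identify: the no-enclosed-defect constraint guarantees that the partner $j$ of vertex $n$ becomes the \emph{rightmost} defect after deletion, and conversely that attaching a new vertex $n$ to the rightmost defect creates no crossing, since a link $\{a,b\}$ with $a<j<b<n$ would already have enclosed the defect $j$. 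One small point you leave implicit but which does hold: the inverse in the matched case always has a defect to attach to, because a half-diagram on $n-1$ vertices with $p-1$ links has $n-2p+1\ge 1$ defects whenever $2p\le n$. What your route buys is that no reconstruction argument is needed, the conclusion following from uniqueness of the solution of the recurrence; what it costs is the extra bookkeeping just described, against the paper's one-line encoding.

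Your closing remark, however, is mistaken: the ``naive left-to-right step encoding'' is precisely the paper's proof, and it \emph{is} injective. The resolution of your worry is that defects and link-openers are encoded by the same step (a right move), so nothing at bracket depth zero needs to be distinguished at encoding time; the decoding is then forced by the very planarity property you isolated. Indeed, when vertex $k$ closes a link, its partner must be the nearest earlier vertex that is still unmatched, since any unmatched vertex strictly between them would be either an enclosed defect or the opener of a crossing link; the leftover unmatched vertices at the end are the defects. So the direct bijection not only works but is the shorter of the two arguments.
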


\begin{proof}
The set of all half-diagrams on $n$ vertices with $p$ links is in
bijection with the set of lattice walks (see \S\ref{s:1}) from $(0,0)$
to $(n-p,p)$. Reading a half-diagram from left to right, a walker
moves up at the $k$-th step if the $k$-th marked point closes a link,
and moves right otherwise. For example, the half-diagram
\[
\begin{tikzpicture}[scale = 0.35,thick, baseline={(0,-1ex/2)}] 
  \tikzstyle{vertex} = [shape = circle, minimum size = 4pt, inner sep = 1pt,
    fill=black] 
\node[vertex] (G-1) at (0.0, 1) [shape = circle, draw] {}; 
\node[vertex] (G-2) at (1.5, 1) [shape = circle, draw] {}; 
\node[vertex] (G-3) at (3.0, 1) [shape = circle, draw] {}; 
\node[vertex] (G-4) at (4.5, 1) [shape = circle, draw] {}; 
\node[vertex] (G-5) at (6.0, 1) [shape = circle, draw] {}; 
\node[vertex] (G-6) at (7.5, 1) [shape = circle, draw] {};
\node[vertex] (G-7) at (9.0, 1) [shape = circle, draw] {};
\node[vertex] (G-8) at (10.5, 1) [shape = circle, draw] {};
\draw[] (G-8) -- (10.5,0); 
\draw[] (G-1) -- (0,0); 
\draw[] (G-2) .. controls +(1, -1) and +(-1, -1) .. (G-7); 
\draw[] (G-3) .. controls +(0.5, -0.5) and +(-0.5, -0.5) .. (G-4); 
\draw[] (G-5) .. controls +(0.5, -0.5) and +(-0.5, -0.5) .. (G-6); 
\end{tikzpicture}
\]
with $8$ vertices and $3$ links corresponds to the lattice walk
\[
\begin{tikzpicture}[scale = 0.5,thick, baseline={(0,-1ex/2)}] 
\draw[very thick] (0,0)--(3,0)--(3,1)--(4,1)--(4,3)--(5,3);
\draw[step=1,dashed,gray,very thin] (0,0) grid (5,5);
\end{tikzpicture}
\]
from $(0,0)$ to $(5,3) = (n-p,p)$. The half-diagram may be
reconstructed from the lattice walk, so this is a bijection as claimed. 
\end{proof}

Kauffman observed that the set of $n$-diagrams is in a natural
bijection with the set of half-diagrams with $2n$ vertices and $n$
links. This bijection is visualized by the following picture:
\[
\begin{tikzpicture}[scale = 0.25,thick, baseline={(0,-1ex/2)}]
  \tikzstyle{vertex} = [shape = circle, minimum size = 4pt, inner sep = 1pt,
    fill=black]
  \path[use as bounding box] (0, -1) rectangle (5, 1);
  \draw[thick] (0,-1)--(5,-1)--(5,1)--(0,1)--(0,-1);
  \fill[lightgray] (0,-1) rectangle (5,1);
  \node[vertex] (B-1) at (1, -1) [shape = circle, draw] {};
  \node[vertex] (B-2) at (2, -1) [shape = circle, draw] {};
  \node[vertex] (B-3) at (3, -1) [shape = circle, draw] {};
  \node[vertex] (B-4) at (4, -1) [shape = circle, draw] {};
  \node[vertex] (T-1) at (1, 1) [shape = circle, draw] {};
  \node[vertex] (T-2) at (2, 1) [shape = circle, draw] {};
  \node[vertex] (T-3) at (3, 1) [shape = circle, draw] {};
  \node[vertex] (T-4) at (4, 1) [shape = circle, draw] {};
\end{tikzpicture}
\quad \mapsto \quad
\begin{tikzpicture}[scale = 0.25,thick, baseline={(0,-1ex/2)}]
  \tikzstyle{vertex} = [shape = circle, minimum size = 4pt, inner sep = 1pt,
    fill=black]
  \path[use as bounding box] (0, -4) rectangle (5, 1.5);
  \draw[thick] (0,-1)--(5,-1)--(5,1)--(0,1)--(0,-1);
  \fill[lightgray] (0,-1) rectangle (5,1);
  \node[vertex] (B-1) at (1, -1) [shape = circle, draw] {};
  \node[vertex] (B-2) at (2, -1) [shape = circle, draw] {};
  \node[vertex] (B-3) at (3, -1) [shape = circle, draw] {};
  \node[vertex] (B-4) at (4, -1) [shape = circle, draw] {};
  \node[vertex] (T-1) at (1, 1) [shape = circle, draw] {};
  \node[vertex] (T-2) at (2, 1) [shape = circle, draw] {};
  \node[vertex] (T-3) at (3, 1) [shape = circle, draw] {};
  \node[vertex] (T-4) at (4, 1) [shape = circle, draw] {};
  \node[vertex] (T-5) at (6, 1) [shape = circle, draw] {};
  \node[vertex] (T-6) at (7, 1) [shape = circle, draw] {};
  \node[vertex] (T-7) at (8, 1) [shape = circle, draw] {};
  \node[vertex] (T-8) at (9, 1) [shape = circle, draw] {};
  \draw[] (B-4) .. controls +(2, -2) and +(0,-0.5) .. (T-5);
  \draw[] (B-3) .. controls +(4, -4) and +(0,-0.5) .. (T-6);
  \draw[] (B-2) .. controls +(6, -6) and +(0,-0.5) .. (T-7);
  \draw[] (B-1) .. controls +(8, -8) and +(0,-0.5) .. (T-8);
\end{tikzpicture}
\]
In other words, draw an $n$-diagram in a rectangle, and rotate its
bottom edge through an angle of $180^\circ$, with its vertex at the
upper right corner of the rectangle. Edges are stretched accordingly
to maintain the planarity. For example,
\[
\begin{tikzpicture}[scale = 0.35,thick, baseline={(0,-1ex/2)}] 
  \tikzstyle{vertex} = [shape = circle, minimum size = 4pt, inner sep = 1pt,
    fill=black] 
\node[vertex] (G--4) at (4.5, -1) [shape = circle, draw] {}; 
\node[vertex] (G-4) at (4.5, 1) [shape = circle, draw] {}; 
\node[vertex] (G--3) at (3.0, -1) [shape = circle, draw] {}; 
\node[vertex] (G--2) at (1.5, -1) [shape = circle, draw] {}; 
\node[vertex] (G--1) at (0.0, -1) [shape = circle, draw] {}; 
\node[vertex] (G-3) at (3.0, 1) [shape = circle, draw] {}; 
\node[vertex] (G-1) at (0.0, 1) [shape = circle, draw] {}; 
\node[vertex] (G-2) at (1.5, 1) [shape = circle, draw] {}; 
\draw[] (G-4) .. controls +(0, -1) and +(0, 1) .. (G--4); 
\draw[] (G--3) .. controls +(-0.5, 0.5) and +(0.5, 0.5) .. (G--2); 
\draw[] (G-3) .. controls +(-1, -1) and +(1, 1) .. (G--1); 
\draw[] (G-1) .. controls +(0.5, -0.5) and +(-0.5, -0.5) .. (G-2); 
\end{tikzpicture}
\quad \mapsto \quad
\begin{tikzpicture}[scale = 0.35,thick, baseline={(0,-1ex/2)}] 
  \tikzstyle{vertex} = [shape = circle, minimum size = 4pt, inner sep = 1pt,
    fill=black] 
\node[vertex] (G-4) at (4.5, 1) [shape = circle, draw] {}; 
\node[vertex] (G-3) at (3.0, 1) [shape = circle, draw] {}; 
\node[vertex] (G-1) at (0.0, 1) [shape = circle, draw] {}; 
\node[vertex] (G-2) at (1.5, 1) [shape = circle, draw] {};
\node[vertex] (T-5) at (6,1) [shape = circle, draw] {};
\node[vertex] (T-6) at (7.5,1) [shape = circle, draw] {};
\node[vertex] (T-7) at (9,1) [shape = circle, draw] {};
\node[vertex] (T-8) at (10.5,1) [shape = circle, draw] {};
\draw[] (G-1) .. controls +(0.5, -0.5) and +(-0.5, -0.5) .. (G-2);
\draw[] (G-4) .. controls +(0.5, -0.5) and +(-0.5, -0.5) .. (T-5);
\draw[] (T-6) .. controls +(0.5, -0.5) and +(-0.5, -0.5) .. (T-7);
\draw[] (G-3) .. controls +(1.5, -1.5) and +(-1.5, -1.5) .. (T-8);
\end{tikzpicture}
\]
This process of mapping $n$-diagrams to half-diagrams (with $2n$
vertices and $n$ links) is clearly reversible, hence defines a
bijection.  Thus, $\Cat_{2n,n}$ counts this number, and 
\begin{equation}
  \text{rank}_{\Bbbk} \D_n(\delta) = \Cat_{2n,n}.
\end{equation}
This is again the $n$th Catalan number. It appeared already in
\eqref{e:upper-bound}, as the number of words in $\TL_n(\delta)$ in
Jones normal form.

Kauffman \cite{K:90} found an algorithm that expresses a given
$n$-diagram by a reduced expression as a product of the
$\hat{e}_i$.

Start by drawing the diagram inside its enclosing rectangle. As
strands do not cross, the diagram partitions the rectangle into a
disjoint union of open regions.  Number the regions according to the
natural ``reading'' order: left to right within top to bottom.  For
example, the following picture gives the ordering
\[
\begin{tikzpicture}[scale = 0.75,thick, baseline={(0,-1ex/2)}] 
\tikzstyle{vertex} = [shape=circle, minimum size=4pt, inner sep=1pt,fill=black] 
\node[vertex] (G--10) at (13.5, -2) [shape = circle, draw] {}; 
\node[vertex] (G--7) at (9.0, -2) [shape = circle, draw] {}; 
\node[vertex] (G--9) at (12.0, -2) [shape = circle, draw] {}; 
\node[vertex] (G--8) at (10.5, -2) [shape = circle, draw] {}; 
\node[vertex] (G--6) at (7.5, -2) [shape = circle, draw] {}; 
\node[vertex] (G-10) at (13.5, 2) [shape = circle, draw] {}; 
\node[vertex] (G--5) at (6.0, -2) [shape = circle, draw] {}; 
\node[vertex] (G-1) at (0.0, 2) [shape = circle, draw] {}; 
\node[vertex] (G--4) at (4.5, -2) [shape = circle, draw] {}; 
\node[vertex] (G--3) at (3.0, -2) [shape = circle, draw] {}; 
\node[vertex] (G--2) at (1.5, -2) [shape = circle, draw] {}; 
\node[vertex] (G--1) at (0.0, -2) [shape = circle, draw] {}; 
\node[vertex] (G-2) at (1.5, 2) [shape = circle, draw] {}; 
\node[vertex] (G-9) at (12.0, 2) [shape = circle, draw] {}; 
\node[vertex] (G-3) at (3.0, 2) [shape = circle, draw] {}; 
\node[vertex] (G-8) at (10.5, 2) [shape = circle, draw] {}; 
\node[vertex] (G-4) at (4.5, 2) [shape = circle, draw] {}; 
\node[vertex] (G-7) at (9.0, 2) [shape = circle, draw] {}; 
\node[vertex] (G-5) at (6.0, 2) [shape = circle, draw] {}; 
\node[vertex] (G-6) at (7.5, 2) [shape = circle, draw] {};
\draw[blue] (-0.5,2) -- (14,2);
\draw[blue] (-0.5,-2) -- (14,-2);
\draw[blue] (-0.5,2) -- (-0.5,-2);
\draw[blue] (14,2) -- (14,-2);
\draw[name path=i] (G--10) .. controls +(-1.2, 1.2) and +(1.2, 1.2) .. (G--7); 
\draw[name path=j] (G--9) .. controls +(-0.5, 0.5) and +(0.5, 0.5) .. (G--8); 
\draw[name path=h] (G-10) .. controls +(-2, -3) and +(3, 3) .. (G--6); 
\draw[name path=b] (G-1) .. controls +(2, -3) and +(-3, 3) .. (G--5); 
\draw[name path=d] (G--4) .. controls +(-0.5, 0.5) and +(0.5, 0.5) .. (G--3); 
\draw[name path=a] (G--2) .. controls +(-0.5, 0.5) and +(0.5, 0.5) .. (G--1); 
\draw[name path=c] (G-2) .. controls +(3, -3) and +(-3, -3) .. (G-9); 
\draw[name path=e] (G-3) .. controls +(2, -2) and +(-2, -2) .. (G-8); 
\draw[name path=f] (G-4) .. controls +(1.2, -1.2) and +(-1.2, -1.2) .. (G-7); 
\draw[name path=g] (G-5) .. controls +(0.5, -0.5) and +(-0.5, -0.5) .. (G-6);

\node at (5.5,1.7) {\footnotesize\bf1};
\node at (3.9,1.6) {\footnotesize\bf2};
\node at (2.6,1.5) {\footnotesize\bf3};
\node at (1.3,1.3) {\footnotesize\bf4};
\node at (0.5,-0.5) {\footnotesize\bf5};
\node at (11.5,-0.5) {\footnotesize\bf6};
\node at (10.3,-1.6) {\footnotesize\bf7};
\end{tikzpicture}
\]
for its enclosed $10$-diagram, except that four regions, one at the
top and three along the bottom, have not been numbered. The unnumbered
regions (for which the entire upper or lower boundary is a segment of
the rectangle) don't matter for Kauffman's algorithm.

Next, let $\ell_i$ be the vertical line bisecting the rectangle with
vertices at the nodes $i$, $i+1$, $i'$, $(i+1)'$. This line always
crosses an even number (possibly zero) of strands in the
diagram. Connect the intersection points in consecutive pairs by a
dashed line segment along $\ell_i$. Do this for each $i = 1, \dots,
n-1$. Here is the above diagram with its connections.
\[
\begin{tikzpicture}[scale = 0.75,thick, baseline={(0,-1ex/2)}] 
\tikzstyle{vertex} = [shape=circle, minimum size=4pt, inner sep=1pt,fill=black] 
\node[vertex] (G--10) at (13.5, -2) [shape = circle, draw] {}; 
\node[vertex] (G--7) at (9.0, -2) [shape = circle, draw] {}; 
\node[vertex] (G--9) at (12.0, -2) [shape = circle, draw] {}; 
\node[vertex] (G--8) at (10.5, -2) [shape = circle, draw] {}; 
\node[vertex] (G--6) at (7.5, -2) [shape = circle, draw] {}; 
\node[vertex] (G-10) at (13.5, 2) [shape = circle, draw] {}; 
\node[vertex] (G--5) at (6.0, -2) [shape = circle, draw] {}; 
\node[vertex] (G-1) at (0.0, 2) [shape = circle, draw] {}; 
\node[vertex] (G--4) at (4.5, -2) [shape = circle, draw] {}; 
\node[vertex] (G--3) at (3.0, -2) [shape = circle, draw] {}; 
\node[vertex] (G--2) at (1.5, -2) [shape = circle, draw] {}; 
\node[vertex] (G--1) at (0.0, -2) [shape = circle, draw] {}; 
\node[vertex] (G-2) at (1.5, 2) [shape = circle, draw] {}; 
\node[vertex] (G-9) at (12.0, 2) [shape = circle, draw] {}; 
\node[vertex] (G-3) at (3.0, 2) [shape = circle, draw] {}; 
\node[vertex] (G-8) at (10.5, 2) [shape = circle, draw] {}; 
\node[vertex] (G-4) at (4.5, 2) [shape = circle, draw] {}; 
\node[vertex] (G-7) at (9.0, 2) [shape = circle, draw] {}; 
\node[vertex] (G-5) at (6.0, 2) [shape = circle, draw] {}; 
\node[vertex] (G-6) at (7.5, 2) [shape = circle, draw] {};
\draw[blue] (-0.5,2) -- (14,2);
\draw[blue] (-0.5,-2) -- (14,-2);
\draw[blue] (-0.5,2) -- (-0.5,-2);
\draw[blue] (14,2) -- (14,-2);
\draw[name path=i] (G--10) .. controls +(-1.2, 1.2) and +(1.2, 1.2) .. (G--7); 
\draw[name path=j] (G--9) .. controls +(-0.5, 0.5) and +(0.5, 0.5) .. (G--8); 
\draw[name path=h] (G-10) .. controls +(-2, -3) and +(3, 3) .. (G--6); 
\draw[name path=b] (G-1) .. controls +(2, -3) and +(-3, 3) .. (G--5); 
\draw[name path=d] (G--4) .. controls +(-0.5, 0.5) and +(0.5, 0.5) .. (G--3); 
\draw[name path=a] (G--2) .. controls +(-0.5, 0.5) and +(0.5, 0.5) .. (G--1); 
\draw[name path=c] (G-2) .. controls +(3, -3) and +(-3, -3) .. (G-9); 
\draw[name path=e] (G-3) .. controls +(2, -2) and +(-2, -2) .. (G-8); 
\draw[name path=f] (G-4) .. controls +(1.2, -1.2) and +(-1.2, -1.2) .. (G-7); 
\draw[name path=g] (G-5) .. controls +(0.5, -0.5) and +(-0.5, -0.5) .. (G-6);
\path[name path=v1] (0.75,-2)--(0.75,2);
\path[name path=v2] (2.25,-2)--(2.25,2);
\path[name path=v3] (3.75,-2)--(3.75,2);
\path[name path=v4] (5.25,-2)--(5.25,2);
\path[name path=v5] (6.75,-2)--(6.75,2);
\path[name path=v6] (8.25,-2)--(8.25,2);
\path[name path=v7] (9.75,-2)--(9.75,2);
\path[name path=v8] (11.25,-2)--(11.25,2);
\path[name path=v9] (12.75,-2)--(12.75,2);

\path[name intersections={of=a and v1,by=a1}];
\path[name intersections={of=b and v1,by=b1}];
\draw[red,densely dashed] (a1)--(b1);

\path[name intersections={of=b and v2,by=b2}];
\path[name intersections={of=c and v2,by=c2}];
\draw[red,densely dashed] (b2)--(c2);

\path[name intersections={of=d and v3,by=d3}];
\path[name intersections={of=b and v3,by=b3}];
\draw[red,densely dashed] (d3)--(b3);

\path[name intersections={of=c and v3,by=c3}];
\path[name intersections={of=e and v3,by=e3}];
\draw[red,densely dashed] (c3)--(e3);

\path[name intersections={of=f and v4,by=f4}];
\path[name intersections={of=e and v4,by=e4}];
\draw[red,densely dashed] (f4)--(e4);

\path[name intersections={of=b and v4,by=b4}];
\path[name intersections={of=c and v4,by=c4}];
\draw[red,densely dashed] (b4)--(c4);

\path[name intersections={of=f and v5,by=f5}];
\path[name intersections={of=g and v5,by=g5}];
\draw[red,densely dashed] (f5)--(g5);

\path[name intersections={of=e and v5,by=e5}];
\path[name intersections={of=c and v5,by=c5}];
\draw[red,densely dashed] (e5)--(c5);

\path[name intersections={of=f and v6,by=f6}];
\path[name intersections={of=e and v6,by=e6}];
\draw[red,densely dashed] (f6)--(e6);

\path[name intersections={of=c and v6,by=c6}];
\path[name intersections={of=h and v6,by=h6}];
\draw[red,densely dashed] (c6)--(h6);

\path[name intersections={of=e and v7,by=e7}];
\path[name intersections={of=c and v7,by=c7}];
\draw[red,densely dashed] (e7)--(c7);

\path[name intersections={of=h and v7,by=h7}];
\path[name intersections={of=i and v7,by=i7}];
\draw[red,densely dashed] (h7)--(i7);

\path[name intersections={of=h and v8,by=h8}];
\path[name intersections={of=c and v8,by=c8}];
\draw[red,densely dashed] (h8)--(c8);

\path[name intersections={of=i and v8,by=i8}];
\path[name intersections={of=j and v8,by=j8}];
\draw[red,densely dashed] (i8)--(j8);

\path[name intersections={of=h and v9,by=h9}];
\path[name intersections={of=i and v9,by=i9}];
\draw[red,densely dashed] (h9)--(i9);
\end{tikzpicture}
\]
Label each connecting segment on $\ell_i$ by $\hat{e}_i$.  Each
numbered region $k$ has an associated word $w_k(d)$ obtained as the
product of its connection labels in order from left to right. Then
\[
w(d) := w_1(d) \cdots w_r(d) \qquad(\text{$r$ is the number of
  numbered regions})
\]
is a reduced word corresponding to the given diagram $d$. In the
example depicted above, the reduced word is
\[
w(d) = (\hat{e}_5)(\hat{e}_4\hat{e}_6)(\hat{e}_3\hat{e}_5\hat{e}_7)
(\hat{e}_2\hat{e}_4\hat{e}_6\hat{e}_8)(\hat{e}_1\hat{e}_3)
(\hat{e}_7\hat{e}_9)(\hat{e}_8) .
\]
Although not needed, it is easy to apply commutation relations to
rewrite this in its Jones normal form, which in this case is
\[
(\hat{e}_5\hat{e}_4\hat{e}_3\hat{e}_2\hat{e}_1)
(\hat{e}_6\hat{e}_5\hat{e}_4\hat{e}_3)(\hat{e}_7\hat{e}_6)
(\hat{e}_8\hat{e}_7)(\hat{e}_9\hat{e}_8).
\]
That the word $w(d)$ constructs the original diagram $d$ is shown in
Figure~\ref{fig1}. In that figure, the individual diagrams are the
$w_j(d)$, each of which corresponds to a product of generators.
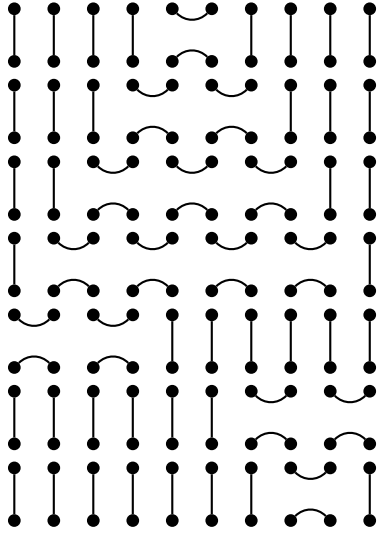
\begin{figure}[ht]
\begin{align*}
\begin{tikzpicture}[scale = 0.35,thick, baseline={(0,-1ex/2)}] 
  \tikzstyle{vertex} = [shape = circle, minimum size = 4pt,
    inner sep=1pt,fill=black] 
\node[vertex] (G--10) at (13.5, -1) [shape = circle, draw] {}; 
\node[vertex] (G-10) at (13.5, 1) [shape = circle, draw] {}; 
\node[vertex] (G--9) at (12.0, -1) [shape = circle, draw] {}; 
\node[vertex] (G-9) at (12.0, 1) [shape = circle, draw] {}; 
\node[vertex] (G--8) at (10.5, -1) [shape = circle, draw] {}; 
\node[vertex] (G-8) at (10.5, 1) [shape = circle, draw] {}; 
\node[vertex] (G--7) at (9.0, -1) [shape = circle, draw] {}; 
\node[vertex] (G-7) at (9.0, 1) [shape = circle, draw] {}; 
\node[vertex] (G--6) at (7.5, -1) [shape = circle, draw] {}; 
\node[vertex] (G--5) at (6.0, -1) [shape = circle, draw] {}; 
\node[vertex] (G--4) at (4.5, -1) [shape = circle, draw] {}; 
\node[vertex] (G-4) at (4.5, 1) [shape = circle, draw] {}; 
\node[vertex] (G--3) at (3.0, -1) [shape = circle, draw] {}; 
\node[vertex] (G-3) at (3.0, 1) [shape = circle, draw] {}; 
\node[vertex] (G--2) at (1.5, -1) [shape = circle, draw] {}; 
\node[vertex] (G-2) at (1.5, 1) [shape = circle, draw] {}; 
\node[vertex] (G--1) at (0.0, -1) [shape = circle, draw] {}; 
\node[vertex] (G-1) at (0.0, 1) [shape = circle, draw] {}; 
\node[vertex] (G-5) at (6.0, 1) [shape = circle, draw] {}; 
\node[vertex] (G-6) at (7.5, 1) [shape = circle, draw] {}; 
\draw[] (G-10) .. controls +(0, -1) and +(0, 1) .. (G--10); 
\draw[] (G-9) .. controls +(0, -1) and +(0, 1) .. (G--9); 
\draw[] (G-8) .. controls +(0, -1) and +(0, 1) .. (G--8); 
\draw[] (G-7) .. controls +(0, -1) and +(0, 1) .. (G--7); 
\draw[] (G--6) .. controls +(-0.5, 0.5) and +(0.5, 0.5) .. (G--5); 
\draw[] (G-4) .. controls +(0, -1) and +(0, 1) .. (G--4); 
\draw[] (G-3) .. controls +(0, -1) and +(0, 1) .. (G--3); 
\draw[] (G-2) .. controls +(0, -1) and +(0, 1) .. (G--2); 
\draw[] (G-1) .. controls +(0, -1) and +(0, 1) .. (G--1); 
\draw[] (G-5) .. controls +(0.5, -0.5) and +(-0.5, -0.5) .. (G-6); 
\end{tikzpicture}
\\
\begin{tikzpicture}[scale = 0.35,thick, baseline={(0,-1ex/2)}] 
\tikzstyle{vertex} = [shape = circle, minimum size = 4pt, inner sep=1pt,fill=black] 
\node[vertex] (G--10) at (13.5, -1) [shape = circle, draw] {}; 
\node[vertex] (G-10) at (13.5, 1) [shape = circle, draw] {}; 
\node[vertex] (G--9) at (12.0, -1) [shape = circle, draw] {}; 
\node[vertex] (G-9) at (12.0, 1) [shape = circle, draw] {}; 
\node[vertex] (G--8) at (10.5, -1) [shape = circle, draw] {}; 
\node[vertex] (G-8) at (10.5, 1) [shape = circle, draw] {}; 
\node[vertex] (G--7) at (9.0, -1) [shape = circle, draw] {}; 
\node[vertex] (G--6) at (7.5, -1) [shape = circle, draw] {}; 
\node[vertex] (G--5) at (6.0, -1) [shape = circle, draw] {}; 
\node[vertex] (G--4) at (4.5, -1) [shape = circle, draw] {}; 
\node[vertex] (G--3) at (3.0, -1) [shape = circle, draw] {}; 
\node[vertex] (G-3) at (3.0, 1) [shape = circle, draw] {}; 
\node[vertex] (G--2) at (1.5, -1) [shape = circle, draw] {}; 
\node[vertex] (G-2) at (1.5, 1) [shape = circle, draw] {}; 
\node[vertex] (G--1) at (0.0, -1) [shape = circle, draw] {}; 
\node[vertex] (G-1) at (0.0, 1) [shape = circle, draw] {}; 
\node[vertex] (G-4) at (4.5, 1) [shape = circle, draw] {}; 
\node[vertex] (G-5) at (6.0, 1) [shape = circle, draw] {}; 
\node[vertex] (G-6) at (7.5, 1) [shape = circle, draw] {}; 
\node[vertex] (G-7) at (9.0, 1) [shape = circle, draw] {}; 
\draw[] (G-10) .. controls +(0, -1) and +(0, 1) .. (G--10); 
\draw[] (G-9) .. controls +(0, -1) and +(0, 1) .. (G--9); 
\draw[] (G-8) .. controls +(0, -1) and +(0, 1) .. (G--8); 
\draw[] (G--7) .. controls +(-0.5, 0.5) and +(0.5, 0.5) .. (G--6); 
\draw[] (G--5) .. controls +(-0.5, 0.5) and +(0.5, 0.5) .. (G--4); 
\draw[] (G-3) .. controls +(0, -1) and +(0, 1) .. (G--3); 
\draw[] (G-2) .. controls +(0, -1) and +(0, 1) .. (G--2); 
\draw[] (G-1) .. controls +(0, -1) and +(0, 1) .. (G--1); 
\draw[] (G-4) .. controls +(0.5, -0.5) and +(-0.5, -0.5) .. (G-5); 
\draw[] (G-6) .. controls +(0.5, -0.5) and +(-0.5, -0.5) .. (G-7); 
\end{tikzpicture}
\\
\begin{tikzpicture}[scale = 0.35,thick, baseline={(0,-1ex/2)}] 
\tikzstyle{vertex} = [shape = circle, minimum size = 4pt, inner sep=1pt,fill=black] 
\node[vertex] (G--10) at (13.5, -1) [shape = circle, draw] {}; 
\node[vertex] (G-10) at (13.5, 1) [shape = circle, draw] {}; 
\node[vertex] (G--9) at (12.0, -1) [shape = circle, draw] {}; 
\node[vertex] (G-9) at (12.0, 1) [shape = circle, draw] {}; 
\node[vertex] (G--8) at (10.5, -1) [shape = circle, draw] {}; 
\node[vertex] (G--7) at (9.0, -1) [shape = circle, draw] {}; 
\node[vertex] (G--6) at (7.5, -1) [shape = circle, draw] {}; 
\node[vertex] (G--5) at (6.0, -1) [shape = circle, draw] {}; 
\node[vertex] (G--4) at (4.5, -1) [shape = circle, draw] {}; 
\node[vertex] (G--3) at (3.0, -1) [shape = circle, draw] {}; 
\node[vertex] (G--2) at (1.5, -1) [shape = circle, draw] {}; 
\node[vertex] (G-2) at (1.5, 1) [shape = circle, draw] {}; 
\node[vertex] (G--1) at (0.0, -1) [shape = circle, draw] {}; 
\node[vertex] (G-1) at (0.0, 1) [shape = circle, draw] {}; 
\node[vertex] (G-3) at (3.0, 1) [shape = circle, draw] {}; 
\node[vertex] (G-4) at (4.5, 1) [shape = circle, draw] {}; 
\node[vertex] (G-5) at (6.0, 1) [shape = circle, draw] {}; 
\node[vertex] (G-6) at (7.5, 1) [shape = circle, draw] {}; 
\node[vertex] (G-7) at (9.0, 1) [shape = circle, draw] {}; 
\node[vertex] (G-8) at (10.5, 1) [shape = circle, draw] {}; 
\draw[] (G-10) .. controls +(0, -1) and +(0, 1) .. (G--10); 
\draw[] (G-9) .. controls +(0, -1) and +(0, 1) .. (G--9); 
\draw[] (G--8) .. controls +(-0.5, 0.5) and +(0.5, 0.5) .. (G--7); 
\draw[] (G--6) .. controls +(-0.5, 0.5) and +(0.5, 0.5) .. (G--5); 
\draw[] (G--4) .. controls +(-0.5, 0.5) and +(0.5, 0.5) .. (G--3); 
\draw[] (G-2) .. controls +(0, -1) and +(0, 1) .. (G--2); 
\draw[] (G-1) .. controls +(0, -1) and +(0, 1) .. (G--1); 
\draw[] (G-3) .. controls +(0.5, -0.5) and +(-0.5, -0.5) .. (G-4); 
\draw[] (G-5) .. controls +(0.5, -0.5) and +(-0.5, -0.5) .. (G-6); 
\draw[] (G-7) .. controls +(0.5, -0.5) and +(-0.5, -0.5) .. (G-8); 
\end{tikzpicture}
\\
\begin{tikzpicture}[scale = 0.35,thick, baseline={(0,-1ex/2)}] 
\tikzstyle{vertex} = [shape = circle, minimum size = 4pt, inner sep=1pt,fill=black] 
\node[vertex] (G--10) at (13.5, -1) [shape = circle, draw] {}; 
\node[vertex] (G-10) at (13.5, 1) [shape = circle, draw] {}; 
\node[vertex] (G--9) at (12.0, -1) [shape = circle, draw] {}; 
\node[vertex] (G--8) at (10.5, -1) [shape = circle, draw] {}; 
\node[vertex] (G--7) at (9.0, -1) [shape = circle, draw] {}; 
\node[vertex] (G--6) at (7.5, -1) [shape = circle, draw] {}; 
\node[vertex] (G--5) at (6.0, -1) [shape = circle, draw] {}; 
\node[vertex] (G--4) at (4.5, -1) [shape = circle, draw] {}; 
\node[vertex] (G--3) at (3.0, -1) [shape = circle, draw] {}; 
\node[vertex] (G--2) at (1.5, -1) [shape = circle, draw] {}; 
\node[vertex] (G--1) at (0.0, -1) [shape = circle, draw] {}; 
\node[vertex] (G-1) at (0.0, 1) [shape = circle, draw] {}; 
\node[vertex] (G-2) at (1.5, 1) [shape = circle, draw] {}; 
\node[vertex] (G-3) at (3.0, 1) [shape = circle, draw] {}; 
\node[vertex] (G-4) at (4.5, 1) [shape = circle, draw] {}; 
\node[vertex] (G-5) at (6.0, 1) [shape = circle, draw] {}; 
\node[vertex] (G-6) at (7.5, 1) [shape = circle, draw] {}; 
\node[vertex] (G-7) at (9.0, 1) [shape = circle, draw] {}; 
\node[vertex] (G-8) at (10.5, 1) [shape = circle, draw] {}; 
\node[vertex] (G-9) at (12.0, 1) [shape = circle, draw] {}; 
\draw[] (G-10) .. controls +(0, -1) and +(0, 1) .. (G--10); 
\draw[] (G--9) .. controls +(-0.5, 0.5) and +(0.5, 0.5) .. (G--8); 
\draw[] (G--7) .. controls +(-0.5, 0.5) and +(0.5, 0.5) .. (G--6); 
\draw[] (G--5) .. controls +(-0.5, 0.5) and +(0.5, 0.5) .. (G--4); 
\draw[] (G--3) .. controls +(-0.5, 0.5) and +(0.5, 0.5) .. (G--2); 
\draw[] (G-1) .. controls +(0, -1) and +(0, 1) .. (G--1); 
\draw[] (G-2) .. controls +(0.5, -0.5) and +(-0.5, -0.5) .. (G-3); 
\draw[] (G-4) .. controls +(0.5, -0.5) and +(-0.5, -0.5) .. (G-5); 
\draw[] (G-6) .. controls +(0.5, -0.5) and +(-0.5, -0.5) .. (G-7); 
\draw[] (G-8) .. controls +(0.5, -0.5) and +(-0.5, -0.5) .. (G-9); 
\end{tikzpicture}
\\
\begin{tikzpicture}[scale = 0.35,thick, baseline={(0,-1ex/2)}] 
\tikzstyle{vertex} = [shape = circle, minimum size = 4pt, inner sep=1pt,fill=black] 
\node[vertex] (G--10) at (13.5, -1) [shape = circle, draw] {}; 
\node[vertex] (G-10) at (13.5, 1) [shape = circle, draw] {}; 
\node[vertex] (G--9) at (12.0, -1) [shape = circle, draw] {}; 
\node[vertex] (G-9) at (12.0, 1) [shape = circle, draw] {}; 
\node[vertex] (G--8) at (10.5, -1) [shape = circle, draw] {}; 
\node[vertex] (G-8) at (10.5, 1) [shape = circle, draw] {}; 
\node[vertex] (G--7) at (9.0, -1) [shape = circle, draw] {}; 
\node[vertex] (G-7) at (9.0, 1) [shape = circle, draw] {}; 
\node[vertex] (G--6) at (7.5, -1) [shape = circle, draw] {}; 
\node[vertex] (G-6) at (7.5, 1) [shape = circle, draw] {}; 
\node[vertex] (G--5) at (6.0, -1) [shape = circle, draw] {}; 
\node[vertex] (G-5) at (6.0, 1) [shape = circle, draw] {}; 
\node[vertex] (G--4) at (4.5, -1) [shape = circle, draw] {}; 
\node[vertex] (G--3) at (3.0, -1) [shape = circle, draw] {}; 
\node[vertex] (G--2) at (1.5, -1) [shape = circle, draw] {}; 
\node[vertex] (G--1) at (0.0, -1) [shape = circle, draw] {}; 
\node[vertex] (G-1) at (0.0, 1) [shape = circle, draw] {}; 
\node[vertex] (G-2) at (1.5, 1) [shape = circle, draw] {}; 
\node[vertex] (G-3) at (3.0, 1) [shape = circle, draw] {}; 
\node[vertex] (G-4) at (4.5, 1) [shape = circle, draw] {}; 
\draw[] (G-10) .. controls +(0, -1) and +(0, 1) .. (G--10); 
\draw[] (G-9) .. controls +(0, -1) and +(0, 1) .. (G--9); 
\draw[] (G-8) .. controls +(0, -1) and +(0, 1) .. (G--8); 
\draw[] (G-7) .. controls +(0, -1) and +(0, 1) .. (G--7); 
\draw[] (G-6) .. controls +(0, -1) and +(0, 1) .. (G--6); 
\draw[] (G-5) .. controls +(0, -1) and +(0, 1) .. (G--5); 
\draw[] (G--4) .. controls +(-0.5, 0.5) and +(0.5, 0.5) .. (G--3); 
\draw[] (G--2) .. controls +(-0.5, 0.5) and +(0.5, 0.5) .. (G--1); 
\draw[] (G-1) .. controls +(0.5, -0.5) and +(-0.5, -0.5) .. (G-2); 
\draw[] (G-3) .. controls +(0.5, -0.5) and +(-0.5, -0.5) .. (G-4); 
\end{tikzpicture}
\\
\begin{tikzpicture}[scale = 0.35,thick, baseline={(0,-1ex/2)}] 
\tikzstyle{vertex} = [shape = circle, minimum size = 4pt, inner sep=1pt,fill=black] 
\node[vertex] (G--10) at (13.5, -1) [shape = circle, draw] {}; 
\node[vertex] (G--9) at (12.0, -1) [shape = circle, draw] {}; 
\node[vertex] (G--8) at (10.5, -1) [shape = circle, draw] {}; 
\node[vertex] (G--7) at (9.0, -1) [shape = circle, draw] {}; 
\node[vertex] (G--6) at (7.5, -1) [shape = circle, draw] {}; 
\node[vertex] (G-6) at (7.5, 1) [shape = circle, draw] {}; 
\node[vertex] (G--5) at (6.0, -1) [shape = circle, draw] {}; 
\node[vertex] (G-5) at (6.0, 1) [shape = circle, draw] {}; 
\node[vertex] (G--4) at (4.5, -1) [shape = circle, draw] {}; 
\node[vertex] (G-4) at (4.5, 1) [shape = circle, draw] {}; 
\node[vertex] (G--3) at (3.0, -1) [shape = circle, draw] {}; 
\node[vertex] (G-3) at (3.0, 1) [shape = circle, draw] {}; 
\node[vertex] (G--2) at (1.5, -1) [shape = circle, draw] {}; 
\node[vertex] (G-2) at (1.5, 1) [shape = circle, draw] {}; 
\node[vertex] (G--1) at (0.0, -1) [shape = circle, draw] {}; 
\node[vertex] (G-1) at (0.0, 1) [shape = circle, draw] {}; 
\node[vertex] (G-7) at (9.0, 1) [shape = circle, draw] {}; 
\node[vertex] (G-8) at (10.5, 1) [shape = circle, draw] {}; 
\node[vertex] (G-9) at (12.0, 1) [shape = circle, draw] {}; 
\node[vertex] (G-10) at (13.5, 1) [shape = circle, draw] {}; 
\draw[] (G--10) .. controls +(-0.5, 0.5) and +(0.5, 0.5) .. (G--9); 
\draw[] (G--8) .. controls +(-0.5, 0.5) and +(0.5, 0.5) .. (G--7); 
\draw[] (G-6) .. controls +(0, -1) and +(0, 1) .. (G--6); 
\draw[] (G-5) .. controls +(0, -1) and +(0, 1) .. (G--5); 
\draw[] (G-4) .. controls +(0, -1) and +(0, 1) .. (G--4); 
\draw[] (G-3) .. controls +(0, -1) and +(0, 1) .. (G--3); 
\draw[] (G-2) .. controls +(0, -1) and +(0, 1) .. (G--2); 
\draw[] (G-1) .. controls +(0, -1) and +(0, 1) .. (G--1); 
\draw[] (G-7) .. controls +(0.5, -0.5) and +(-0.5, -0.5) .. (G-8); 
\draw[] (G-9) .. controls +(0.5, -0.5) and +(-0.5, -0.5) .. (G-10); 
\end{tikzpicture}
\\
\begin{tikzpicture}[scale = 0.35,thick, baseline={(0,-1ex/2)}] 
\tikzstyle{vertex} = [shape = circle, minimum size = 4pt, inner sep=1pt,fill=black] 
\node[vertex] (G--10) at (13.5, -1) [shape = circle, draw] {}; 
\node[vertex] (G-10) at (13.5, 1) [shape = circle, draw] {}; 
\node[vertex] (G--9) at (12.0, -1) [shape = circle, draw] {}; 
\node[vertex] (G--8) at (10.5, -1) [shape = circle, draw] {}; 
\node[vertex] (G--7) at (9.0, -1) [shape = circle, draw] {}; 
\node[vertex] (G-7) at (9.0, 1) [shape = circle, draw] {}; 
\node[vertex] (G--6) at (7.5, -1) [shape = circle, draw] {}; 
\node[vertex] (G-6) at (7.5, 1) [shape = circle, draw] {}; 
\node[vertex] (G--5) at (6.0, -1) [shape = circle, draw] {}; 
\node[vertex] (G-5) at (6.0, 1) [shape = circle, draw] {}; 
\node[vertex] (G--4) at (4.5, -1) [shape = circle, draw] {}; 
\node[vertex] (G-4) at (4.5, 1) [shape = circle, draw] {}; 
\node[vertex] (G--3) at (3.0, -1) [shape = circle, draw] {}; 
\node[vertex] (G-3) at (3.0, 1) [shape = circle, draw] {}; 
\node[vertex] (G--2) at (1.5, -1) [shape = circle, draw] {}; 
\node[vertex] (G-2) at (1.5, 1) [shape = circle, draw] {}; 
\node[vertex] (G--1) at (0.0, -1) [shape = circle, draw] {}; 
\node[vertex] (G-1) at (0.0, 1) [shape = circle, draw] {}; 
\node[vertex] (G-8) at (10.5, 1) [shape = circle, draw] {}; 
\node[vertex] (G-9) at (12.0, 1) [shape = circle, draw] {}; 
\draw[] (G-10) .. controls +(0, -1) and +(0, 1) .. (G--10); 
\draw[] (G--9) .. controls +(-0.5, 0.5) and +(0.5, 0.5) .. (G--8); 
\draw[] (G-7) .. controls +(0, -1) and +(0, 1) .. (G--7); 
\draw[] (G-6) .. controls +(0, -1) and +(0, 1) .. (G--6); 
\draw[] (G-5) .. controls +(0, -1) and +(0, 1) .. (G--5); 
\draw[] (G-4) .. controls +(0, -1) and +(0, 1) .. (G--4); 
\draw[] (G-3) .. controls +(0, -1) and +(0, 1) .. (G--3); 
\draw[] (G-2) .. controls +(0, -1) and +(0, 1) .. (G--2); 
\draw[] (G-1) .. controls +(0, -1) and +(0, 1) .. (G--1); 
\draw[] (G-8) .. controls +(0.5, -0.5) and +(-0.5, -0.5) .. (G-9); 
\end{tikzpicture}
\end{align*}
\caption{Construction of the diagram $d$ from $w(d)$}\label{fig1}
\end{figure}

\begin{thm}[Kauffman \cite{K:90}*{Thm.~4.3}]\label{t:Kauffman}
  Every $n$-diagram $d$ is expressible as a product of the diagrams
  $\hat{e}_i$ ($1 \le i \le n-1$).  If $\Bbbk=\Z[x]$, where $x$ is an
  indeterminate, the map $\sigma$ in equation \eqref{e:morphism} gives
  an isomorphism $\TL_n(x) \cong \D_n(x)$, as $\Z[x]$-algebras.
\end{thm}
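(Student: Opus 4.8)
The plan is to treat the two assertions separately, since they are of quite different character. For the first assertion---that every $n$-diagram $d$ is a product of the $\hat{e}_i$---I would verify that Kauffman's algorithm described above really does reconstruct $d$: that is, that $\sigma(w(d)) = d$ in $\D_n(x)$, where $w(d) = w_1(d)\cdots w_r(d)$ is the word read off from the numbered regions. Concretely, I would stack the layer diagrams corresponding to the factors $w_1(d),\dots,w_r(d)$ (as displayed in Figure~\ref{fig1}) and check two things: first, that the composite contains no closed loops, so that no power of $\delta=x$ is introduced by the multiplication rule \eqref{e:mult-rule}; and second, that the through-strings and arcs of the composite agree with those of $d$. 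The natural way to make this precise is induction on the number $r$ of numbered regions, using the fact that each vertical line $\ell_i$ meets the diagram in an even number of points and that the dashed segments on $\ell_i$ record exactly the local cap/cup pattern forced by planarity between columns $i$ and $i+1$; peeling off one extreme region reduces $d$ to a diagram with one fewer region, to which the inductive hypothesis applies. In particular, this shows that the homomorphism $\sigma$ of \eqref{e:morphism} is surjective.

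For the second assertion I would combine three facts already in hand. By Theorem~\ref{t:Jones} the algebra $\TL_n(x)$ is spanned over $\Z[x]$ by the words in Jones normal form, of which there are $\Cat_{2n,n}$ by \eqref{e:upper-bound}; the $n$-diagrams form a $\Z[x]$-basis of $\D_n(x)$, so $\D_n(x)$ is free of rank $\Cat_{2n,n}$; and by the first assertion $\sigma$ is surjective. Thus $\sigma$ is a surjection from a module generated by $\Cat_{2n,n}$ elements onto a free module of the same finite rank. I would then invoke the standard commutative-algebra fact that such a surjection must be an isomorphism: a surjective endomorphism of a finitely generated module over a commutative ring is injective (the determinant trick, i.e. a Cayley--Hamilton/Nakayama argument). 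Composing $\sigma$ with a surjection $\Z[x]^{\Cat_{2n,n}} \onto \TL_n(x)$ furnished by the spanning set yields a surjective endomorphism of $\Z[x]^{\Cat_{2n,n}}$, which is therefore an isomorphism; it follows that $\sigma$ is injective, hence an isomorphism. As a free bonus, the injectivity of $\sigma$ forces the $\Cat_{2n,n}$ normal-form words to be linearly independent, which completes the argument promised at the end of \S\ref{s:1} that they form a basis of $\TL_n$.

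I expect the main obstacle to be the first assertion: giving an airtight proof that Kauffman's geometric algorithm terminates in the diagram one started from, rather than merely illustrating it on an example. The delicate points are the loop-free claim and the bookkeeping that matches the region-by-region word to the individual arcs of $d$; these are topological in nature and resist a purely formulaic treatment, which is presumably why the authors present the reconstruction pictorially in Figure~\ref{fig1}. By contrast, once surjectivity is granted, the passage to an isomorphism is routine module theory.

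Finally, I would remark that $\Z[x]$ plays the role of the universal ground ring, which explains its appearance in the statement. For any commutative ring $\Bbbk$ and any $\delta \in \Bbbk$, specializing $x \mapsto \delta$ gives $\TL_n(\delta) \cong \TL_n(x)\otimes_{\Z[x]}\Bbbk$ and likewise $\D_n(\delta) \cong \D_n(x)\otimes_{\Z[x]}\Bbbk$, compatibly with $\sigma$; hence the isomorphism over $\Z[x]$ base-changes to an isomorphism $\TL_n(\delta) \cong \D_n(\delta)$ over every $\Bbbk$.
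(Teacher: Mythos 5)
Your proposal is correct, and its second half takes a genuinely different route from the paper's. On the first assertion you and the paper do essentially the same thing: both rest on Kauffman's algorithm, and the paper in fact gives no formal proof of its correctness beyond the illustration (citing \cite{K:90} and \cite{PS}), so your proposed induction on the number of regions is, if anything, more detailed than what is recorded. The divergence is in the injectivity of $\sigma$. The paper argues via degrees in $x$: for a diagram $d$, write $d=\sigma(w)$, factor $w=x^r w'$ with $w'$ reduced, and note that $\sigma(w')=x^{r'}d'$ for some diagram $d'$ by the multiplication rule \eqref{e:mult-rule}; since diagrams are linearly independent in $\D_n(x)$ and $x$ is an indeterminate, $d=x^{r+r'}d'$ forces $r=r'=0$ and $d=d'$. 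Hence every diagram is the image of a normal-form word with \emph{no} $x$-factor, and comparing the two counts in \eqref{e:upper-bound} shows that $\sigma$ restricts to a bijection from normal-form words onto diagrams, so a nontrivial relation among the former would transport to one among the latter. You instead combine the same counting with the Cayley--Hamilton/determinant-trick fact that a surjective endomorphism of a finitely generated module over a commutative ring is injective. Both arguments are sound. Yours is shorter and, notably, nowhere uses that $x$ is an indeterminate: it applies verbatim over any commutative ring $\Bbbk$ and any $\delta\in\Bbbk$, so it proves the Corollary following the theorem directly and makes the specialization step you append (and which the paper genuinely needs) optional. What the paper's degree argument buys in exchange is the sharper conclusion, used in that Corollary, that $\sigma$ carries the set of normal-form words exactly onto the set of $n$-diagrams, with no $x$-powers intervening; your argument recovers this only with one further observation (if some basis element $\sigma(w)$ equaled $x^{r}d$ with $r>0$, then $d$ itself could not lie in the $\Z[x]$-span of the $\sigma(w)$'s, contradicting surjectivity).
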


\begin{proof}
The first claim follows from Kauffman's algorithm, as illustrated
above (cf.~also \cite{K:90}*{Figure 16} or \cite{PS}*{Thm.~26.10}). It
implies that the map $\sigma$ is surjective. Thus, there is some word
$w$ in $\TL_n(x)$ for which $\sigma(w) = d$.  Furthermore, there is a
reduced word $w'$ such that $w = x^r w'$ for some nonnegative integer
$r$. But $\sigma(w') = x^{r'}d'$, where $d'$ is an $n$-diagram and
$r'$ a nonnegative integer. Hence we have
\[
d = \sigma(w) = x^{r+r'} d'.
\]
Thus $r=r'=0$, $w=w'$, and $d=d'$. By Theorem \ref{t:Jones} we may
assume that $w$ is in normal form.  Finally, any non-trivial linear
relation holding among the normal form elements of $\TL_n(x)$ would
induce a corresponding non-trivial relation among the $n$-diagrams in
$\D_n(x)$, which would be contradictory. This shows that $\sigma$ is
injective, thus an isomorphism.
\end{proof}

Now we return to the general case.

\begin{cor}
Over any commutative ring $\Bbbk$, for any $\delta$ in $\Bbbk$, the
natural map $\sigma$ is an isomorphism $\TL_n(\delta) \cong
\D_n(\delta)$ of $\Bbbk$-algebras. This isomorphism sends the set
$\Omega$ of reduced words in Jones normal form to the set
$n$-diagrams. In particular, $\TL_n(\delta)$ is free as a
$\Bbbk$-module with basis $\Omega$, and $\mathrm{rank}_\Bbbk
\TL_n(\delta) = \Cat_{2n,n} = \frac{1}{n+1}\binom{2n}{n}$.
\end{cor}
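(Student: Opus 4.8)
The plan is to reduce the statement over an arbitrary $(\Bbbk,\delta)$ to the universal case over $\Z[x]$ treated in Theorem~\ref{t:Kauffman}, by base change along the ring homomorphism $\phi\colon \Z[x]\to\Bbbk$ determined by $x\mapsto\delta$, which makes $\Bbbk$ into a $\Z[x]$-algebra.

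First I would realize both algebras as specializations of their generic versions. Since $\TL_n(x)$ is presented by the generators $e_1,\dots,e_{n-1}$ and the relations~\eqref{e:TL} with $\delta$ replaced by $x$, and a presentation by generators and relations is preserved under base change, applying $\phi$ to the relations (only $e_i^2 = x e_i$ involves the ground ring) recovers exactly the relations of $\TL_n(\delta)$; hence $\TL_n(\delta)\cong \Bbbk\otimes_{\Z[x]}\TL_n(x)$ as $\Bbbk$-algebras. On the diagram side, $\D_n(x)$ is the free $\Z[x]$-module on the $n$-diagrams with structure constants $x^L$, so tensoring with $\Bbbk$ simply replaces each $x^L$ by $\delta^L$ and yields $\D_n(\delta)\cong \Bbbk\otimes_{\Z[x]}\D_n(x)$. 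Because $\sigma$ is given on generators by $\sigma(e_i)=\hat{e}_i$ regardless of the ground ring, under these identifications the map $\sigma$ for $\Bbbk$ coincides with $\mathrm{id}_\Bbbk\otimes\sigma_x$, where $\sigma_x$ is the corresponding map over $\Z[x]$.

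Next I would use that any additive functor, in particular $\Bbbk\otimes_{\Z[x]}(-)$, preserves isomorphisms: if $\tau_x$ is a two-sided inverse of $\sigma_x$, then $\mathrm{id}_\Bbbk\otimes\tau_x$ inverts $\mathrm{id}_\Bbbk\otimes\sigma_x$. Since Theorem~\ref{t:Kauffman} asserts that $\sigma_x$ is an isomorphism, it follows at once that $\sigma\colon\TL_n(\delta)\to\D_n(\delta)$ is an isomorphism---and, notably, no flatness of $\Bbbk$ over $\Z[x]$ is required. For the basis statement, recall from the proof of Theorem~\ref{t:Kauffman} that $\sigma_x$ restricts to a bijection from the set $\Omega$ of Jones-normal-form words onto the set of $n$-diagrams, so $\Omega$ is a $\Z[x]$-basis of $\TL_n(x)$. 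Since the base change of a basis of a free module is again a basis, $\Omega$ is a $\Bbbk$-basis of $\TL_n(\delta)=\Bbbk\otimes_{\Z[x]}\TL_n(x)$; equivalently, $\sigma$ carries $\Omega$ onto the $n$-diagrams, which form a $\Bbbk$-basis of $\D_n(\delta)$.

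This yields all the assertions: $\sigma$ is an isomorphism, it sends $\Omega$ to the set of $n$-diagrams, and $\TL_n(\delta)$ is free over $\Bbbk$ of rank $|\Omega|=\Cat_{2n,n}=\frac{1}{n+1}\binom{2n}{n}$. The only step carrying genuine content beyond formal bookkeeping is the first one---verifying that the presentation of $\TL_n$ specializes correctly under $x\mapsto\delta$ so that no relations are lost or gained, and that $\sigma$ is natural in the ground ring. Once these naturality statements are in place, every remaining claim is a formal consequence of the universal isomorphism over $\Z[x]$.
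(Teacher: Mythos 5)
Your proposal is correct and follows essentially the same route as the paper: the paper's proof also regards $\Bbbk$ as a $\Z[x]$-algebra via $x\mapsto\delta$ and base-changes the generic isomorphism $\TL_n(x)\cong\D_n(x)$ of Theorem~\ref{t:Kauffman}, invoking ``standard identifications'' to conclude $\TL_n(\delta)\cong\D_n(\delta)$. The only difference is that you spell out those identifications explicitly (preservation of presentations under base change, compatibility of $\sigma$ with specialization, and base change of bases of free modules), which the paper leaves to the reader.
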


\begin{proof}
Regard $\Bbbk$ as a $\Z[x]$-algebra via the specialization map $\Z[x]
\to \Bbbk$ sending $\sum a_i x^i$ to $\sum a_i \delta^i$, for any $a_i \in
\Z$.  Then
\[
\Bbbk \otimes_{\Z[x]} \TL_n(x) \cong \Bbbk \otimes_{\Z[x]} \D_n(x).
\]
In other words, by standard identifications, we have
\[
\TL_n(\delta) \cong \D_n(\delta)
\]
as $\Bbbk$-algebras. This proves the first claim. The other claims
follow.
\end{proof}

Henceforth, we identify $\TL_n(\delta)$ with $\D_n(\delta)$ and $e_i$
with $\hat{e}_i$ for all $i$.

\begin{rmk}
(i) Kauffman works over $\C$ in \cite{K:90}, but his argument works
  the same over $\Z[x]$.

(ii) It follows from the diagrammatic interpretation that
  $\TL_{n-1}(\delta)$ is isomorphic to the subalgebra of
  $\TL_n(\delta)$ generated by $e_1, \dots, e_{n-2}$.

(iii) The diagrammatic interpretation also implies that
  $\TL_n(\delta)$ may be identified with the subalgebra of Brauer's
  centralizer algebra (on $n$ strands, with parameter $\delta$)
  spanned by its planar diagrams. In \cite{Birman-Wenzl}, Birman and
  Wenzl found a presentation of Brauer's algebra that also implies
  this identification.

(iv) See Algorithm~\ref{a:skew} for a very new algorithm that computes
  reduced expressions of $n$-diagrams without any need to apply
  commutation relations.
\end{rmk}

\section{The basic construction}\noindent
The Jones basic construction \cites{Jones:83,Jones:86} was originally
applied to inclusions of von Neumann algebras. It was generalized in
\cite{GHJ}, which we follow here. We work over a field $\Bbbk$ in this
section.  Suppose that $N \subset M$ is a given inclusion of
$\Bbbk$-algebras such that $1_N = 1_M$. Then $M \subset L$, where $L
= \End_N(M)$, is another such inclusion, where $M$ is regarded as a
right $N$-module.  Iterating this idea produces a tower
\begin{equation}
  M_0 \subset M_1 \subset \cdots \subset M_i \subset M_{i+1}
  \subset \cdots
\end{equation}
of $\Bbbk$-algebras, where $M_0=N$, $M_1=M$, and $M_{i+1}
= \End_{M_{i-1}}(M_i)$ for all $i \ge 1$. The \emph{rank}
$\mathrm{rk}(M_k|M_0)$ of $M_k$ over $M_0$ is the smallest number (in
$\N \cup \{\infty\}$) of generators of $M_k$ viewed as a right
$M_0$-module. The \emph{index} $[M:N]$ of $N$ in $M$ is the growth
rate
\[
[M:N] = \limsup_{k\to \infty} \big( \mathrm{rk}(M_k|M_0) \big)^{1/k} . 
\]
If $N \subset M$ is an inclusion of semisimple algebras then
(\cite{GHJ}*{Cor.~2.1.2}) either $[M:N] = 4\cos^2(\pi/q)$ for some
integer $q \ge 3$ or $[M:N] \ge 4$.

Now suppose that $N \subset M$ is an inclusion of finite dimensional
split semisimple $\Bbbk$-algebras. (These are called ``multi-matrix
algebras'' in the terminology of \cite{GHJ}.)  A \emph{trace} on $M$
is a linear map $\tr: M \to \Bbbk$ such that $\tr(xy)=\tr(yx)$ for all
$x,y \in M$. It is \emph{nondegenerate} if the corresponding bilinear
form $(x,y) \mapsto \tr(xy)$ is nondegenerate. Assume that there
exists a nondegenerate trace $\tr$ on $M$ whose restriction to $N$ is
nondegenerate. (This is always true if $\Bbbk$ has characteristic zero.)
Then there is a unique $\Bbbk$-linear map $\E: M \to N$, called a
\emph{conditional expectation}, such that
\[
\begin{alignedat}{2}
  & \E(y)=y && \text{for all } y \in N \\
  & \E(y_1xy_2) = y_1 \E(x) y_2 && \text{for all } x \in M, y_1,y_2 \in N \\
  & \tr(\E(x)) = \tr(x) &\qquad\qquad & \text{for all } x \in M. 
\end{alignedat}
\]
Of course $\E$ is an element of $L = \End_N(M)$.  In this situation,
$L$ is generated by $M$ and $\E$. More precisely, $L$ is generated as a
vector space by all $x_1 \E x_2$, where $x_1,x_2 \in M$.

In general, traces and conditional expectations do not propagate up
the tower.  To obtain such a property it is necessary to consider
Markov traces.  Given $\beta \ne 0$ in $\Bbbk$, a \emph{Markov trace of
modulus} $\beta$ on $N \subset M$ is a nondegenerate trace $\tr$ on
$M$ with nondegenerate restriction to $N$ for which there exists a
(necessarily unique) trace $\Tr$ on $L = \End_N(M)$ such that
\[
\begin{minipage}{1in}
\begin{align*}
  \Tr(x) &= \tr(x) \\
  \beta \Tr(x \E) &= \tr(x) 
\end{align*}
\end{minipage}\qquad\qquad
\text{for all $x \in M$.}
\]
Assuming that a Markov trace $\tr$ of some modulus $0 \ne \beta \in
\Bbbk$ exists on the pair $N \subset M$ of finite dimensional split
semisimple algebras, the authors of \cite{GHJ} show that the trace
propagates up the tower to give Markov traces $\tr_k$ on $M_{k-1}
\subset M_k$ and conditional expectations $\E_k: M_k \to M_{k-1}$, for
each $k\ge 1$. Note that $\E_k$ belongs to $M_{k+1}$. 

\begin{thm}[\cite{GHJ}] \label{t:tower}
Assume that $N \subset M$ is an inclusion of finite dimensional split
semisimple $\Bbbk$-algebras, on which there exists a Markov trace $\tr$
of modulus $\beta$, where $0 \ne \beta \in \Bbbk$. For each $k \ge 1$,
let $\tr_k$, $\E_k$ be as above. Then
\begin{enumerate}
\item $M_k$ is generated by $M_1=M$ and $\E_1, \dots, \E_{k-1}$.
\item The idempotents $\E_1, \dots, \E_{k-1}$ satisfy the relations
  \begin{align*}
    \beta \E_i \E_j \E_i = \E_i \quad &\quad \text{ if } |i-j|=1 \\
    \E_i\E_j=\E_j\E_i &\quad \text{otherwise.}
  \end{align*}
\end{enumerate}
\end{thm}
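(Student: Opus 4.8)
The plan is to derive both assertions from the bimodule and trace properties of the conditional expectations $\E_k$ together with the Markov relation, handling the generation statement by induction up the tower and the relations by reduction to a single key identity. For (1) I would induct on $k$. The cases $k=1$ (where $M_1=M$) and $k=2$ (where $M_2=\End_N(M)$) follow from the fact, recalled just before the theorem, that $\End_N(M)$ is spanned as a vector space by the products $x_1\E_1 x_2$ with $x_1,x_2\in M$, so that $M_2$ is generated as an algebra by $M$ and $\E_1$. For the inductive step I apply the same spanning statement to the inclusion $M_{k-1}\subset M_k$, which carries the propagated Markov trace and hence satisfies the standing hypotheses: this shows $M_{k+1}=\End_{M_{k-1}}(M_k)$ is spanned by the products $x_1\E_k x_2$ with $x_1,x_2\in M_k$, so it is generated as an algebra by $M_k$ and $\E_k$. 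Combining with the inductive hypothesis that $M_k$ is generated by $M$ and $\E_1,\dots,\E_{k-1}$ gives the claim.

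For (2), idempotency of each $\E_k$ is immediate: since $\E_k$ maps $M_k$ into $M_{k-1}$ and restricts to the identity on $M_{k-1}$, one has $\E_k^2=\E_k$ as an operator, i.e.\ as an element of $M_{k+1}$. The commutation relations for $|i-j|\ge 2$ follow from the bimodule property: if $i<j$ then $\E_i\in M_{i+1}\subseteq M_{j-1}$, and since $\E_j$ is an $M_{j-1}$-bimodule endomorphism of $M_j$ it commutes with left multiplication by any element of $M_{j-1}$, in particular with $\E_i$.

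The heart of the matter is the adjacent relation, and the key computation is the identity $\E_{i+1}(\E_i)=\beta^{-1}1$ in $M_i$. I would prove this by pairing against an arbitrary $y\in M_i$: using right $M_i$-linearity of $\E_{i+1}$, then the trace-preservation property $\tr_i\circ\E_{i+1}=\tr_{i+1}$, then cyclicity of the trace, and finally the Markov relation $\beta\,\tr_{i+1}(y\E_i)=\tr_i(y)$, one finds $\tr_i\big(\E_{i+1}(\E_i)\,y\big)=\beta^{-1}\tr_i(y)$ for all $y\in M_i$; nondegeneracy of $\tr_i$ on $M_i$ then forces $\E_{i+1}(\E_i)=\beta^{-1}1$. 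Together with the pull-through identity $\E_k\,x\,\E_k=\E_k(x)\,\E_k$ for $x\in M_k$ (proved directly from the bimodule property, noting $\E_k(x)\in M_{k-1}$), this immediately yields one of the two adjacent relations: taking $x=\E_i$ in the pull-through at level $i+1$ gives $\E_{i+1}\E_i\E_{i+1}=\E_{i+1}(\E_i)\E_{i+1}=\beta^{-1}\E_{i+1}$.

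The main obstacle is the companion relation $\beta\,\E_i\E_{i+1}\E_i=\E_i$, with the smaller index on the outside, because the pull-through at level $i$ cannot be applied to the middle factor $\E_{i+1}\notin M_i$. I would instead verify, as operators on $M_{i+1}$, that $\E_i\cdot\E_{i+1}(\E_i\,m)=\beta^{-1}\E_i\,m$ for all $m\in M_{i+1}$, which is equivalent to the desired relation. By the spanning statement already used in (1), $M_{i+1}$ is spanned by elements $m=x\,\E_i\,y$ with $x,y\in M_i$, so it suffices to check this on such $m$. For these, the pull-through at level $i$ gives $\E_i m=\E_i(x)\E_i y$ with $\E_i(x)\in M_{i-1}$; applying the $M_i$-bimodule map $\E_{i+1}$ and using $\E_{i+1}(\E_i)=\beta^{-1}$ reduces $\E_{i+1}(\E_i m)$ to $\beta^{-1}\E_i(x)\,y$, and multiplying on the left by $\E_i$, using that $\E_i$ commutes with $M_{i-1}\ni\E_i(x)$, returns $\beta^{-1}\E_i(x)\E_i y=\beta^{-1}\E_i m$, as required. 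This case analysis on the spanning set is the only genuinely delicate point; everything else is formal manipulation of the bimodule and trace identities.
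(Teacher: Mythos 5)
Your proof is correct. Note that the paper itself offers no proof of this theorem --- it is stated as a quotation from \cite{GHJ} --- so there is no internal argument to compare against; your derivation is essentially the standard one from that book: the spanning of the basic construction $M_{k+1}=\End_{M_{k-1}}(M_k)$ by elements $x\,\E_k\,y$, the pull-through identity $\E_k x \E_k = \E_k(x)\E_k$, the computation $\E_{i+1}(\E_i)=\beta^{-1}1$ via trace-preservation, the Markov relation, and nondegeneracy of $\tr_i$, and finally the spanning-set verification of the companion relation $\beta\,\E_i\E_{i+1}\E_i=\E_i$, all of which are legitimately based on facts the paper recalls just before the statement.
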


\begin{rmk}
(i) In \cites{Jones:83,Jones:86} the ground field is $\Bbbk = \C$ and
  the inclusion $N \subset M$ is an inclusion of von Neumann algebras.
  See \cites{Jones:09,EK} for surveys of connections with quantum
  topology and mathematical physics.

(ii) The basic construction was initially applied to construct the Jones
  polynomial \cite{Jones:85} in knot theory. Further applications can
  be found in
  \cites{Birman-Wenzl,Wenzl:annals,Wenzl:93,Kadison,Halverson-Ram}.
  
(iii) In the context of Theorem \ref{t:tower}, the authors of
  \cite{GHJ} show that the pair $N \subset M$ is determined, up to
  isomorphism, by an inclusion matrix $\Lambda$ with nonnegative
  integer entries. The matrix $\Lambda$ may be encoded as a graph, the
  Bratteli diagram of the pair, and $[M:N]$ is the square of the
  Euclidean norm of the graph. It follows that $[M:N] \le 4$ if and
  only if the Bratteli diagram of the pair $N \subset M$ is a Coxeter
  graph of type A, D, or E.
\end{rmk}

Theorem \ref{t:tower} focuses attention on the $\Bbbk$-algebra
$\A_n(\beta)$ defined by the generators $1, u_1, \dots, u_{n-1}$
subject to the relations
\begin{equation}\label{e:Jones}
  \begin{aligned}
  u_i^2 = u_i, \quad
  \beta u_i u_j u_i = u_i \text{ if } |i-j|=1, \quad
  u_i u_j &= u_j u_i \text{ if } |i-j|>1 .
\end{aligned}
\end{equation}
We call this algebra the \emph{Jones algebra}.
We now consider the issue of semisimplicity of the Jones algebra,
following \cite{GHJ}.  Let $x$ be an indeterminate and $\Z[x]$ the
ring of polynomials in $x$ with integer coefficients. Define
polynomials $P_n(x)$ in $\Z[x]$ for each integer $n \ge 0$ by the
recursion
\begin{equation}
\begin{gathered}
  P_0(x)=1, \qquad P_1(x)=1,\\ P_{n+1}(x) = P_n(x)-xP_{n-1}(x)
  \quad \text{if $n \ge 1$}.
\end{gathered}
\end{equation}
The analysis in \cite{GHJ} suggests that the $P_n(x)$ are closely
related to the Chebyshev polynomials of the second kind. The precise
relation was made explicit in \cite{Benkart-Moon}.

Say that a trace $\tr_n:\A_n(\beta) \to \Bbbk$ is \emph{normalized} if
it takes the identity to the identity in $\Bbbk$.  Here is the
semisimplicity result. It was originally obtained over $\C$ by Jones,
and extended to arbitrary fields in the cited reference.

\begin{thm}[\cite{GHJ}*{Prop.~2.8.5}]\label{t:ss}
  Suppose that $\Bbbk$ is a field, $0 \ne \beta \in \Bbbk$, and $P_1(\beta^{-1})
  P_2(\beta^{-1}) \cdots P_{n-1}(\beta^{-1}) \ne 0$ in $\Bbbk$.  Then:
  \begin{enumerate}
  \item The Jones algebra $\A_n(\beta)$ is split semisimple over
    $\Bbbk$.
  \item There exists a unique normalized trace $\tr_n:\A_n(\beta) \to
    \Bbbk$ such that for any $1 \le j \le n-1$,
    \[
    \beta \tr_n(wu_j) = \tr_n(w)
    \]
    for all $w$ in the subalgebra generated by $1, u_1, \dots,
    u_{j-1}$. Furthermore, $\tr_n$ is nondegenerate if
    $P_n(\beta^{-1}) \ne 0$.
  \item The natural map $\A_{n-1}(\beta) \to \A_n(\beta)$ is injective
    and $\tr_n$ extends $tr_{n-1}$.
  \end{enumerate}
\end{thm}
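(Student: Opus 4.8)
The plan is to prove the three assertions together by induction on $n$, realizing $\A_n(\beta)$ as an explicit split semisimple matrix algebra built from the Bratteli diagram of the Jones tower. That diagram is the truncated Pascal triangle: its level-$m$ vertices are the integers $p$ with $0\le 2p\le m$, each edge joins $(m,p)$ to $(m+1,p)$ and to $(m+1,p+1)$, and the number of paths from the apex down to $(m,p)$ is the lattice-walk count $\Cat_{m,p}$ of \S\ref{s:1} by the recurrence \eqref{e:recurrance}. I take $V_{n,p}$ to be the $\Bbbk$-space on the paths ending at $(n,p)$, so $\dim_\Bbbk V_{n,p}=\Cat_{n,p}$, and set $B_n=\bigoplus_p \End_\Bbbk(V_{n,p})$, a split semisimple algebra whose Bratteli diagram (over the chain $B_0\subset B_1\subset\cdots$ induced by deleting the last edge of a path) is exactly this triangle. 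The base cases are immediate: $\A_1=\Bbbk$, and $\A_2=\Bbbk[u_1]/(u_1^2-u_1)\cong\Bbbk\times\Bbbk$ with $\tr_2(u_1)=\beta^{-1}$ forced by the Markov relation and Gram determinant a unit times $P_2(\beta^{-1})=1-\beta^{-1}$.

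For the inductive step I would define idempotents $U_1,\dots,U_{n-1}\in B_n$ by the path-model formulas of Jones and Wenzl: each $U_i$ preserves every $V_{n,p}$ and acts there, in the path basis, by a matrix whose nonzero entries are explicit rational expressions in the values $P_1(\beta^{-1}),\dots,P_{n-1}(\beta^{-1})$. The hypothesis that this product is nonzero is exactly what places every such entry in $\Bbbk$. A direct check, local to two adjacent levels of the diagram and amounting to the defining recursion $P_{m+1}=P_m-\beta^{-1}P_{m-1}$, shows that the $U_i$ are idempotent and satisfy the Jones relations \eqref{e:Jones} of modulus $\beta$; when $\beta=4\cos^2(\pi/q)$ the $U_i$ are the conditional expectations of an honest tower, so Theorem~\ref{t:tower} both motivates the relations and corroborates the computation. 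This yields an algebra homomorphism $\Phi:\A_n(\beta)\to B_n$ with $\Phi(u_i)=U_i$.

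The isomorphism and part~(1) now follow by a dimension squeeze. On one side, the Jones normal-form argument of \S\ref{s:1} (Theorem~\ref{t:Jones}) uses only the shape of the relations \eqref{e:Jones} and so applies verbatim to $\A_n(\beta)$, showing it is spanned by at most $\Cat_{2n,n}$ reduced words; thus $\dim_\Bbbk\A_n\le\Cat_{2n,n}$. On the other side, $\dim_\Bbbk B_n=\sum_p\Cat_{n,p}^2=\Cat_{2n,n}$, the last being the ballot identity deduced from \eqref{e:Cnp} and \eqref{e:upper-bound}. Granting that $\Phi$ is surjective --- which I would prove by the usual inductive argument, using $\A_{n-1}\cong B_{n-1}$ together with the irreducibility and pairwise inequivalence of the path modules $V_{n,p}$ to see that the $U_i$ generate every matrix block --- we obtain $\Cat_{2n,n}=\dim_\Bbbk B_n\le\dim_\Bbbk\A_n\le\Cat_{2n,n}$, forcing equality and making $\Phi$ an isomorphism. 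Hence $\A_n(\beta)\cong B_n$ is split semisimple, and the reduced words are in fact a basis. Part~(3) then comes for free: restriction of paths identifies $B_{n-1}$ with a unital subalgebra of $B_n$, so the natural map gives $\A_{n-1}\hookrightarrow\A_n$, and the weight data below are consistent, so $\tr_n$ restricts to $\tr_{n-1}$.

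For part~(2) I would take $\tr_n$ to be the weighted trace on $B_n$ assigning the block $\End_\Bbbk(V_{n,p})$ the weight $w_{n,p}$ coming from the same $P$-data. The Markov identity $\beta\,\tr_n(wu_j)=\tr_n(w)$ for $w$ in the subalgebra on $u_1,\dots,u_{j-1}$ is built into the weights, and uniqueness follows since that identity with $\tr_n(1)=1$ determines $\tr_n$ on the normal-form basis by repeatedly peeling off the maximal-index generator (Theorem~\ref{t:Jones}) and using cyclicity. The form $(x,y)\mapsto\tr_n(xy)$ is block diagonal, and on $\End_\Bbbk(V_{n,p})\cong M_{\Cat_{n,p}}(\Bbbk)$ it equals $w_{n,p}$ times the nondegenerate matrix-trace form; so $\tr_n$ is nondegenerate iff every $w_{n,p}\ne0$. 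Under the standing hypothesis all of these are already nonzero except one, which is a unit multiple of $P_n(\beta^{-1})$, giving the stated criterion. I expect the real work to sit in the inductive step: constructing the operators $U_i$, verifying the Jones relations and the surjectivity of $\Phi$, and pinning down the weights $w_{n,p}$ precisely enough to see that the single new obstruction to nondegeneracy is $P_n(\beta^{-1})$. Everything else is a dimension count or a formal consequence of the Markov relation.
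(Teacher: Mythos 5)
A preliminary point: the paper never proves Theorem~\ref{t:ss} at all --- it is quoted verbatim from \cite{GHJ}*{Prop.~2.8.5}, and the surrounding text only extracts consequences of that proof (the identities \eqref{e:CE-on-gens}--\eqref{e:trace}). So the honest comparison is with the proof in the cited reference, whose machinery the paper summarizes in the preceding section. That proof is an induction riding on the Jones basic construction: assuming $\A_{n-1}(\beta)\subset\A_n(\beta)$ are split semisimple and carry a Markov trace whose restriction is nondegenerate (this is where $P_1(\beta^{-1})\cdots P_{n-1}(\beta^{-1})\ne 0$ is consumed), one identifies the two-sided ideal of $\A_{n+1}(\beta)$ generated by $u_n$ with the fundamental construction $\End_{\A_{n-1}}(\A_n)$, shows the quotient is at most one-dimensional, and reads off the Pascal-triangle Bratteli diagram while propagating the trace up the tower. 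Your route is genuinely different: you build an explicit path-model representation $\Phi:\A_n(\beta)\to B_n=\bigoplus_p\End_\Bbbk(V_{n,p})$ and close the argument with a dimension squeeze, using the Jones normal form bound $\dim_\Bbbk\A_n\le\Cat_{2n,n}$ on one side and the identity $\sum_p\Cat_{n,p}^2=\Cat_{2n,n}$ on the other. This is the Wenzl-style argument, and it buys something GHJ's does not: explicit matrix units and explicit trace weights, which make the nondegeneracy criterion transparent. Your specific claims all check out; in particular the block of $B_n$ indexed by $p$ carries weight $\beta^{-p}P_{n-2p}(\beta^{-1})$ up to a unit, so under the standing hypothesis the unique weight that can vanish is indeed the $p=0$ one, a unit times $P_n(\beta^{-1})$, and your peeling argument (cyclicity plus the Markov identity applied to the maximal-index generator of a normal-form word) does pin down $\tr_n$ uniquely.

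That said, as written your text is a plan rather than a proof: the three steps where the entire technical content lives --- writing down the path-model entries of the $U_i$ and verifying the relations \eqref{e:Jones}, proving surjectivity of $\Phi$, and establishing the weight formula --- are each dispatched with ``a direct check shows'' or ``the usual inductive argument.'' The surjectivity step deserves particular care, because it is not formal: it amounts to showing that the modules $V_{n,p}$ are irreducible and pairwise inequivalent over the subalgebra generated by the $U_i$, and that is precisely where the hypothesis $P_1(\beta^{-1})\cdots P_{n-1}(\beta^{-1})\ne 0$ must be spent a second time (for instance via nondegeneracy of the Gram forms on the $V_{n,p}$, or by re-running the basic-construction induction inside $B_n$). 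If you carry out those three verifications, the argument is complete and is a legitimate alternative to the proof in \cite{GHJ}.
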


The detailed proof of Theorem~\ref{t:ss} in \cite{GHJ} reveals
that the associated conditional expectation $\E_n: \A_n(\beta) \to
\A_{n-1}(\beta)$ satisfies the identities
\begin{equation}\label{e:CE-on-gens}
  \E_n(u_{n-1}) = \beta^{-1} 1\quad\text{and}\quad
  \E_n(u_j) = u_j \text{ for all } 1\le j < n-1.
\end{equation}
The first equality follows from $\E_n u_{n-1} \E_n = \beta^{-1} \E_n$ and
the latter is by definition of conditional expectation. Furthermore,
$\E_n$ satisfies
\begin{equation}\label{e:CE}
  u_{n-1} \E_n(u_{n-1} x) = \beta^{-1} u_{n-1} x, \quad \text{ for all
  } x \in \A_n(\beta).
\end{equation}
Finally, by setting $w=1$ in part (b) of Theorem~\ref{t:ss} we obtain
\begin{equation}\label{e:trace}
  \tr_n(u_j) = \beta^{-1} \text{ for all $1\le j \le n-1$}.
\end{equation}

\begin{rmk}
It may be useful to keep in mind that the conditional expectation
$\E_n$, viewed as an endomorphism of $\A_n(\beta)$, is identified
with the idempotent $u_n$ inside the next algebra $\A_{n+1}$ in the
tower construction.
\end{rmk}

\section{Relation between $\A_n$ and $\TL_n$}\noindent
We continue to assume that $\Bbbk$ is a field in this section.
If $\beta \ne 0$, by setting $e_i = \delta u_i$ for all $i$ we see
that relations \eqref{e:TL} and \eqref{e:Jones} are formally
equivalent if and only if $\beta = \delta^2$. Thus, we have the
following.

\begin{prop}\label{p:TL-iso}
  Suppose that $\Bbbk$ is a field. For any $\delta \ne 0$ in
  $\Bbbk$, there is a $\Bbbk$-algebra isomorphism $\A_n(\delta^2) \cong
  \TL_n(\delta)$ given by $u_i \mapsto \delta^{-1} e_i$ for all $i$.
\end{prop}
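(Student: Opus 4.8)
The plan is to exploit the universal property of algebras presented by generators and relations: to define a $\Bbbk$-algebra homomorphism out of $\A_n(\delta^2)$ it suffices to assign images to the generators $u_1,\dots,u_{n-1}$ and verify that those images satisfy the defining relations \eqref{e:Jones} (with $\beta=\delta^2$), and symmetrically for $\TL_n(\delta)$ using \eqref{e:TL}. I would construct two homomorphisms in opposite directions and then check they are mutually inverse by evaluating on generators. Since $\Bbbk$ is a field and $\delta\ne 0$, the scalar $\delta^{-1}$ is available, which is exactly what the construction needs.

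First I would define $\varphi\colon \A_n(\delta^2)\to\TL_n(\delta)$ on generators by $u_i\mapsto \delta^{-1}e_i$. To see that $\varphi$ is well defined I must check that the elements $\delta^{-1}e_i$ satisfy \eqref{e:Jones} with $\beta=\delta^2$. The idempotent relation reads $(\delta^{-1}e_i)^2=\delta^{-2}e_i^2=\delta^{-2}(\delta e_i)=\delta^{-1}e_i$; the braid-type relation reads $\delta^2(\delta^{-1}e_i)(\delta^{-1}e_j)(\delta^{-1}e_i)=\delta^{-1}e_ie_je_i=\delta^{-1}e_i$ when $|i-j|=1$; and the commutation relation for $|i-j|>1$ is immediate from $e_ie_j=e_je_i$. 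Each of these uses precisely the corresponding relation in \eqref{e:TL}, so $\varphi$ exists.

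Next I would define $\psi\colon\TL_n(\delta)\to\A_n(\delta^2)$ by $e_i\mapsto \delta u_i$, checking the relations \eqref{e:TL} for the images: $(\delta u_i)^2=\delta^2 u_i=\delta(\delta u_i)$; for $|i-j|=1$ one multiplies the relation $\delta^2 u_iu_ju_i=u_i$ by $\delta$ to get $(\delta u_i)(\delta u_j)(\delta u_i)=\delta^3 u_iu_ju_i=\delta u_i$; and the far-commutation relation again transfers directly. Finally, composing on generators gives $\psi\varphi(u_i)=\psi(\delta^{-1}e_i)=\delta^{-1}(\delta u_i)=u_i$ and $\varphi\psi(e_i)=\varphi(\delta u_i)=\delta(\delta^{-1}e_i)=e_i$, so $\psi\varphi$ and $\varphi\psi$ agree with the identity on generating sets and are therefore the identity maps. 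Hence $\varphi$ is an isomorphism with inverse $\psi$.

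There is no serious obstacle here: the entire content is the observation, recorded just before the statement, that the substitution $e_i=\delta u_i$ converts \eqref{e:Jones} into \eqref{e:TL} exactly when $\beta=\delta^2$. The only point that genuinely requires the field hypothesis, rather than an arbitrary commutative ring, is the invertibility of $\delta$, which is needed both to write $\delta^{-1}e_i$ in defining $\varphi$ and to guarantee that $\varphi$ and $\psi$ are honest two-sided inverses.
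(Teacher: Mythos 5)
Your proposal is correct and is essentially the paper's own argument: the paper simply records that the substitution $e_i = \delta u_i$ makes the relations \eqref{e:TL} and \eqref{e:Jones} formally equivalent precisely when $\beta = \delta^2$, and you have just written out the universal-property details (the two homomorphisms and the check that they are mutually inverse) that the paper leaves implicit. No gaps; the role of the hypothesis $\delta \ne 0$ in a field is identified exactly as in the paper.
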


In other words, as long as $\beta \ne 0$, the Jones algebra is an
equivalent form of the Temperley--Lieb algebra in which the generators
have been rescaled to idempotents.

\begin{rmk}
On the other hand, it follows from its defining presentation
\eqref{e:Jones} that for all $n>2$, $\A_n(\beta)$ collapses to the zero
algebra at $\beta=0$, while the dimension of $\TL_n(0)$ is the $n$th
Catalan number. Also, $\A_2(0) \ncong \TL_2(0)$ is clear. So the
Jones algebra gives no information about $\TL_n(0)$.
\end{rmk}

We now consider the implications of Proposition~\ref{p:TL-iso} for
traces and conditional expectations under Kauffman's diagrammatic
interpretation of the Temperley--Lieb algebra. Always assuming that $0
\ne \beta = \delta^2$, we see that the trace $\tr_n$ and conditional
expectation $\E_n$ considered at the end of the previous section carry
over under the isomorphism $\A_n(\delta^2) \cong \TL_n(\delta)$ to give
a trace and conditional expectation on $\TL_n(\delta)$, that we will
denote by the same symbols.  It is convenient to renormalize so that
the trace still takes identity to identity. With that renormalization,
it turns out that for any $n$-diagram $d$,
\begin{equation}\label{e:diag-tr}
  \tr_n(d) = \delta^{-n}\bar{d} \quad\text{and}\quad
  \E_n(d) = \delta^{-1}\bar{d}^{(n)}
\end{equation}
where $\bar{d}$ is the diagram obtained from $d$ by drawing
non-intersecting curves outside the enclosing rectangle from vertex
$i$ to vertex $i'$ for all $i = 1,\dots, n$, and $\bar{d}^{(n)}$ is
the diagram obtained from $d$ by drawing a single such curve from
vertex $n$ to vertex $n'$.  (See \cite{Kauffman-Lins}*{p.~10}.) In
this process, loops are replaced by $\delta$. We can visualize
$\bar{d}$ and $\bar{d}^{(n)}$ by the pictures:
\[
\bar{d} \;=\; 
\begin{tikzpicture}[scale = 0.25,thick, baseline={(0,-1ex/2)}]
  \tikzstyle{vertex} = [shape=circle,minimum size=4pt,inner sep=1pt,fill=black]
  \path[use as bounding box] (0, -1) rectangle (5, 1);
  \draw[thick] (0,-1)--(5,-1)--(5,1)--(0,1)--(0,-1);
  \fill[lightgray] (0,-1) rectangle (5,1);
  \node[vertex] (B-1) at (1, -1) [shape = circle, draw] {};
  \node[vertex] (B-2) at (2, -1) [shape = circle, draw] {};
  \node[vertex] (B-3) at (3, -1) [shape = circle, draw] {};
  \node[vertex] (B-4) at (4, -1) [shape = circle, draw] {};
  \node[vertex] (T-1) at (1, 1) [shape = circle, draw] {};
  \node[vertex] (T-2) at (2, 1) [shape = circle, draw] {};
  \node[vertex] (T-3) at (3, 1) [shape = circle, draw] {};
  \node[vertex] (T-4) at (4, 1) [shape = circle, draw] {};
  \draw[] (B-4) .. controls +(2, -2) and +(2,2) .. (T-4);
  \draw[] (B-3) .. controls +(4, -4) and +(4,4) .. (T-3);
  \draw[] (B-2) .. controls +(6, -6) and +(6,6) .. (T-2);
  \draw[] (B-1) .. controls +(8, -8) and +(8,8) .. (T-1);
\end{tikzpicture}
\qquad\qquad\text{and}\qquad\; \bar{d}^{(n)} \;=\;
\begin{tikzpicture}[scale = 0.25,thick, baseline={(0,-1ex/2)}]
  \tikzstyle{vertex} = [shape = circle, minimum size = 4pt,
    inner sep=1pt,fill=black] 
  \draw[thick] (0,-1)--(5,-1)--(5,1)--(0,1)--(0,-1);
  \fill[lightgray] (0,-1) rectangle (5,1);
  \node[vertex] (B-1) at (1, -1) [shape = circle, draw] {};
  \node[vertex] (B-2) at (2, -1) [shape = circle, draw] {};
  \node[vertex] (B-3) at (3, -1) [shape = circle, draw] {};
  \node[vertex] (B-4) at (4, -1) [shape = circle, draw] {};
  \node[vertex] (T-1) at (1, 1) [shape = circle, draw] {};
  \node[vertex] (T-2) at (2, 1) [shape = circle, draw] {};
  \node[vertex] (T-3) at (3, 1) [shape = circle, draw] {};
  \node[vertex] (T-4) at (4, 1) [shape = circle, draw] {};
  \draw[] (B-4) .. controls +(2, -2) and +(2,2) .. (T-4);
\end{tikzpicture}
\]
respectively.
It follows from equation \eqref{e:diag-tr} that on generators the maps
$\E_n$ and $\tr_n$ satisfy the identities in equations
\eqref{e:CE-on-gens} and \eqref{e:trace}, respectively, with $\beta$
replaced by $\delta$ and $u_i$ replaced by $e_i$ for all~$i$.

Now we consider implications for the semisimplicity of
$\TL_n(\delta)$. Our goal is to recast the hypothesis of
Theorem~\ref{t:ss} in a more palatable form.  Let $q \in \Bbbk$. For a
positive integer $n$, the classical Gaussian integer $\dbracket{n}_q$
is
\[
  \dbracket{n}_q = 1+q+q^2 + \cdots +q^{n-1}.
\]  
If $q \ne 1$, it can be written in the form $\dbracket{n}_q =
(1-q^n)/(1-q)$ but the definition of $\dbracket{n}_q$ makes perfect
sense at $q = 1$, where it evaluates to the integer $n$. It is
customary to set $\dbracket{0}_q = 0$. Let
\[
\dbracket{n}_q^! = \dbracket{1}_q \cdots \dbracket{n-1}_q
\dbracket{n}_q = \textstyle \prod_{k=1}^n \dbracket{k}_q
\]
if $n>0$, and set $\dbracket{0}_q^! = 1$. 

Now choose $q$ in $\Bbbk$ such that $q \ne 0$, $q \ne -1$, and $\beta
= q+2+q^{-1}$. (Replace $\Bbbk$ by a suitable quadratic extension if
necessary.) It follows by a simple induction that
\begin{equation}
P_n(\beta^{-1}) = \frac{1+q+q^2 + \cdots + q^n}{(1+q)^n} =
\frac{\dbracket{n+1}_q}{(1+q)^n}.
\end{equation}
This was observed in Prop.~2.8.3(iv) of \cite{GHJ}.  Then
Theorem~\ref{t:ss} gives the following corollary.

\begin{cor}\label{c:ss}
  Suppose that $q \ne 0$, $q \ne -1$ where $q$ is in the field
  $\Bbbk$. With $\beta = q+2+q^{-1}$, the Jones algebra $\A_n(\beta) =
  \A_n(q+2+q^{-1})$ is split semisimple over $\Bbbk$ whenever
  $\dbracket{n}^!_q \ne 0$.
\end{cor}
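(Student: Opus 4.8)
The plan is to apply Theorem~\ref{t:ss} directly, so the whole task reduces to translating its two hypotheses $\beta \ne 0$ and $P_1(\beta^{-1})\cdots P_{n-1}(\beta^{-1}) \ne 0$ into the single condition $\dbracket{n}^!_q \ne 0$. First I would dispose of the condition $\beta \ne 0$. Writing $\beta = q+2+q^{-1} = (1+q)^2/q$, the standing hypotheses $q \ne 0$ and $q \ne -1$ force $(1+q)^2 \ne 0$ and $q \ne 0$, hence $\beta \ne 0$; in particular $\beta^{-1} = q/(1+q)^2$ is a well-defined element of the field $\Bbbk$.

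Next I would invoke the closed form $P_k(\beta^{-1}) = \dbracket{k+1}_q/(1+q)^k$ recorded just above the corollary. For completeness this is checked by induction on $k$ using the recursion $P_{k+1} = P_k - \beta^{-1}P_{k-1}$: the base cases $k=0,1$ are immediate from $P_0=P_1=1$, and the inductive step rests on the elementary identity $(1+q)\dbracket{k+1}_q - q\,\dbracket{k}_q = \dbracket{k+2}_q$, which follows at once by writing each Gaussian integer as a geometric sum and collecting terms. (The computation is an identity of rational functions in $q$ and so remains valid at $q=1$, where $\dbracket{m}_q$ specializes to the integer $m$.)

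With the closed form in hand, the remaining step is pure bookkeeping. Multiplying over $k = 1, \dots, n-1$ and using $\dbracket{1}_q = 1$ telescopes the numerators into a quantum factorial and collects the denominators into a single power:
\[
\prod_{k=1}^{n-1} P_k(\beta^{-1})
= \prod_{k=1}^{n-1} \frac{\dbracket{k+1}_q}{(1+q)^k}
= \frac{\dbracket{n}^!_q}{(1+q)^{\binom{n}{2}}}.
\]
Because $q \ne -1$, the factor $(1+q)^{\binom{n}{2}}$ is a nonzero element of $\Bbbk$, so the left-hand product vanishes if and only if $\dbracket{n}^!_q = 0$. Thus the hypothesis of Theorem~\ref{t:ss} holds precisely when $\dbracket{n}^!_q \ne 0$, and in that case the theorem yields that $\A_n(\beta)$ is split semisimple over $\Bbbk$, completing the argument.

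Since every step is either a direct citation of Theorem~\ref{t:ss} or an explicit computation, there is no genuine obstacle here; the only point requiring any care is the closed-form evaluation of $P_k(\beta^{-1})$, and even that reduces to the one-line geometric-series identity above. I would emphasize in the write-up that the role of the hypothesis $q \ne -1$ is precisely to keep the denominator $(1+q)^{\binom{n}{2}}$ invertible, so that the nonvanishing of the product is \emph{equivalent} to, and not merely implied by, the nonvanishing of $\dbracket{n}^!_q$.
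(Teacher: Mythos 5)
Your proposal is correct and follows essentially the same route as the paper: the paper likewise deduces the corollary from Theorem~\ref{t:ss} via the closed form $P_k(\beta^{-1}) = \dbracket{k+1}_q/(1+q)^k$ (stated there as following "by a simple induction" and attributed to \cite{GHJ}), with your write-up merely supplying the details of that induction and the telescoping product.
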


If $\beta = q + 2 + q^{-1}$ then $\beta^{1/2} = \pm(q^{1/2} +
q^{-1/2})$, provided that a square root of $q$ exists in $\Bbbk$.
This, in light of Proposition~\ref{p:TL-iso}, gives the following
restatement of Corollary~\ref{c:ss}.

\begin{cor}
  Let $\Bbbk$ be a field containing a square root $q^{1/2}$ of $q$,
  where $q \ne 0$, $q \ne -1$. If $\dbracket{n}^!_q \ne 0$ then
  $\TL_n(\pm(q^{1/2}+q^{-1/2}))$ is split semisimple over $\Bbbk$.
\end{cor}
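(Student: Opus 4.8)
The plan is to transport the split semisimplicity already established for the Jones algebra in Corollary~\ref{c:ss} across the isomorphism of Proposition~\ref{p:TL-iso}, so that the whole argument reduces to checking that the hypotheses line up. Set $\delta = \pm(q^{1/2}+q^{-1/2})$. The first thing I would verify is that $\delta \ne 0$, which is needed to apply Proposition~\ref{p:TL-iso}: factoring gives $\delta = \pm q^{-1/2}(q+1)$, and since $q \ne 0$ and $q \ne -1$ both factors are nonzero.

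Next I would compute the square, $\delta^2 = (q^{1/2}+q^{-1/2})^2 = q + 2 + q^{-1}$, noting that this value does not depend on the choice of sign. Writing $\beta = q+2+q^{-1}$, we therefore have $\beta = \delta^2$ with $\delta \ne 0$.

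With these identifications in hand, the conclusion follows by chaining the two cited results. Since $q \ne 0$, $q \ne -1$, and $\dbracket{n}^!_q \ne 0$ by hypothesis, Corollary~\ref{c:ss} shows that $\A_n(\beta) = \A_n(q+2+q^{-1})$ is split semisimple over $\Bbbk$. Because $\delta \ne 0$, Proposition~\ref{p:TL-iso} supplies a $\Bbbk$-algebra isomorphism $\A_n(\delta^2) \cong \TL_n(\delta)$, that is, $\A_n(\beta) \cong \TL_n(\pm(q^{1/2}+q^{-1/2}))$. As split semisimplicity is an isomorphism invariant of $\Bbbk$-algebras, the target inherits the property, which is the claim.

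There is no substantive obstacle here: the result is a bookkeeping translation, and the only points needing any care are the nonvanishing of $\delta$ (so that Proposition~\ref{p:TL-iso} applies) and the observation that both sign choices yield the same $\beta$. The one conceptual role worth flagging is the square-root hypothesis $q^{1/2} \in \Bbbk$, which is precisely what makes $\delta$ an element of $\Bbbk$, so that $\TL_n(\delta)$ is defined over $\Bbbk$ itself and the word ``split'' is meaningful over $\Bbbk$ rather than over a quadratic extension.
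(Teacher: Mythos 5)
Your proposal is correct and follows exactly the paper's route: the paper obtains this corollary as a restatement of Corollary~\ref{c:ss} via the observation that $\beta = q+2+q^{-1}$ has square root $\pm(q^{1/2}+q^{-1/2})$ together with the isomorphism $\A_n(\delta^2)\cong\TL_n(\delta)$ of Proposition~\ref{p:TL-iso}. Your additional check that $\delta = \pm q^{-1/2}(q+1) \ne 0$ (needed for Proposition~\ref{p:TL-iso} to apply) is a detail the paper leaves implicit, and is a worthwhile one to make explicit.
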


The advent of the theory of quantum groups led to a slightly different
normalization of the classical Gaussian integers, as follows.
First, we set $v = q^{1/2}$ so that $q = v^2$. We will always assume that
$q=v^2$ from now on. Under that assumption, we have
\[
  \dbracket{n}_q = \dbracket{n}_{v^2} = 1 + v^2 + v^4 + \cdots +
  v^{2(n-1)} = v^{n-1} \textstyle \sum_{k=0}^{n-1} v^{-(n-1)+2k}.
\]
For any $n>0$, the \emph{balanced} form $[n]_v$ of the Gaussian integer,
which is also known as the \emph{quantum integer} (or $q$-integer)
corresponding to $n$, is defined by
\[
  [n]_v = v^{-(n-1)} + v^{-(n-1)+2} + \cdots  + v^{n-1}
    = \textstyle \sum_{k=0}^{n-1} v^{-(n-1)+2k}. 
\]
The definition of $[n]_v$ makes sense when $v=1$, in which case it
evaluates to $n$. (We also set $[0]_v = 0$.)  Notice that if $v^2
\ne 1$ then $[n]_v = \frac{v^n - v^{-n}}{v-v^{-1}}$.  We
define $[n]_v^!  = [1]_v \cdots[n-1]_v [n]_v$ and set $[0]^!_v = 1$.
The balanced and classical forms of Gaussian integers are related by
\begin{equation}
  \dbracket{n}_q = v^{n-1} [n]_v  \qquad (\text{for } q=v^2).
\end{equation}
As $[n]_v$ and $\dbracket{n}_q$ are the same up to a power of $v$, the
preceding corollary may be restated in the following form.

\begin{cor}\label{c:ss-crit}
  Let $\Bbbk$ be a field, $0 \ne v \in \Bbbk$, where $0 \ne
  v+v^{-1}$. If $[n]_v^! \ne 0$ then $\TL_n(\pm(v+v^{-1}))$ is split
  semisimple over $\Bbbk$.
\end{cor}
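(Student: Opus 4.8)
The plan is to deduce this directly from the preceding corollary (stated in terms of the classical Gaussian integers $\dbracket{n}_q$) by the substitution $q = v^2$, checking that every hypothesis transfers correctly under the already-established relation $\dbracket{n}_q = v^{n-1}[n]_v$. First I would set $q = v^2$ and verify the two scalar conditions on $q$. Since $v \ne 0$ in the field $\Bbbk$, we have $q = v^2 \ne 0$. For the condition $q \ne -1$, I would multiply the hypothesis $v + v^{-1} \ne 0$ by the nonzero element $v$ to obtain $v^2 + 1 \ne 0$, i.e. $q \ne -1$. Moreover, $v$ is by hypothesis an element of $\Bbbk$ with $v^2 = q$, so $\Bbbk$ automatically contains a square root $q^{1/2} = v$, fulfilling the square-root hypothesis of the preceding corollary.

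Next I would translate the nonvanishing condition on the factorials. Taking the product of the identity $\dbracket{k}_q = v^{k-1}[k]_v$ over $k = 1, \dots, n$ yields
\[
\dbracket{n}_q^! = \prod_{k=1}^n \dbracket{k}_q = \prod_{k=1}^n v^{k-1}[k]_v = v^{\binom{n}{2}}\,[n]_v^!,
\]
so the two factorials differ only by the power $v^{\binom{n}{2}}$ of the unit $v$. Hence $[n]_v^! \ne 0$ if and only if $\dbracket{n}_q^! \ne 0$. Finally, the diagram parameter is unchanged: $q^{1/2} + q^{-1/2} = v + v^{-1}$, so $\TL_n(\pm(q^{1/2}+q^{-1/2})) = \TL_n(\pm(v+v^{-1}))$. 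Applying the preceding corollary then gives the desired conclusion.

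There is no genuine obstacle here: the statement is essentially a cosmetic restatement of the preceding corollary in balanced (symmetric) notation, and the only content is the bookkeeping that the hypotheses $q \ne 0$, $q \ne -1$, $q^{1/2} \in \Bbbk$, and $\dbracket{n}_q^! \ne 0$ are each equivalent to the corresponding hypotheses phrased in terms of $v$. The mildest point requiring care is confirming that $q \ne -1$ follows from $v + v^{-1} \ne 0$, which is immediate once one clears the denominator by multiplying through by $v$.
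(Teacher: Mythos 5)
Your proposal is correct and follows exactly the paper's route: the paper also obtains this corollary by setting $q = v^2$ and invoking the relation $\dbracket{n}_q = v^{n-1}[n]_v$, so that $\dbracket{n}_q^!$ and $[n]_v^!$ differ by a power of the unit $v$ and the preceding corollary (stated for $q$ with $q^{1/2} \in \Bbbk$) restates as the present one. Your write-up simply makes explicit the bookkeeping (that $v+v^{-1}\ne 0$ gives $q \ne -1$, that $v$ itself serves as the required square root, and the exponent $\binom{n}{2}$) which the paper leaves implicit.
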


See \cite{DG:orthog} for a new elementary proof of this result. The
recent paper \cite{AST} gives a very different proof based on tilting
modules. In many of the early references, e.g.,
\cites{GHJ,Martin,Westbury}, semisimplicity criteria were
formulated in a more complicated way than the simple condition in
Corollary~\ref{c:ss-crit}.

\section{$\TL_n$ as a quotient of the Iwahori--Hecke algebra}%
\label{s:Hecke}\noindent
Over the complex field $\C$, the observation that $\TL_n(v+v^{-1})$ is
isomorphic to a quotient of the Iwahori--Hecke algebra goes back (at
least) to Jones \cite{Jones}.  The following result is a slight
extension (with a different normalization) of
\cite{GHJ}*{Prop.~2.11.1}.

\begin{prop}\label{p:TL-alt}
Let $\Bbbk$ be a commutative unital ring with $v \in \Bbbk$ a fixed
invertible element. Set $\gamma_i = e_i - v^{-1}$ for all $i =
1,\dots, n-1$.  The Temperley--Lieb algebra $\TL_n(\delta)$, with
parameter $\delta = v+v^{-1}$, is the algebra defined by the
generators $\gamma_1, \dots, \gamma_{n-1}$ subject to the relations
\begin{enumerate}
\item $(\gamma_i+v^{-1})(\gamma_i-v)=0$.
\item $\gamma_i\gamma_{i+1}\gamma_i = \gamma_{i+1}\gamma_i\gamma_{i+1}$.
\item $\gamma_i \gamma_j = \gamma_j\gamma_i$ if $|i-j|>1$.
\item $v^3\gamma_i\gamma_{i+1}\gamma_i + v^2(\gamma_i\gamma_{i+1} +
  \gamma_{i+1}\gamma_i) + v(\gamma_i+\gamma_{i+1}) + 1 = 0$. 
\end{enumerate}
\end{prop}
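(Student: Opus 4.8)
The plan is to treat this as an invertible change of generators. Since $v$ is an invertible, hence central, element of $\Bbbk$, the assignment $e_i \mapsto \gamma_i + v^{-1}$, with inverse $\gamma_i \mapsto e_i - v^{-1}$, extends by the universal property to a pair of mutually inverse isomorphisms between the free $\Bbbk$-algebras $\Bbbk\langle e_1, \dots, e_{n-1}\rangle$ and $\Bbbk\langle \gamma_1, \dots, \gamma_{n-1}\rangle$. Under this identification it suffices to show that this isomorphism identifies the two-sided ideal generated by the Temperley--Lieb relations \eqref{e:TL} in the $e_i$ with the two-sided ideal generated by relations (a)--(d) in the $\gamma_i$. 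This reduces the proposition to two collections of elementary identities: that each of (a)--(d) holds in $\TL_n(\delta)$ after the substitution $\gamma_i = e_i - v^{-1}$, and conversely that each Temperley--Lieb relation follows from (a)--(d) after the substitution $e_i = \gamma_i + v^{-1}$.

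For the first collection I would simply expand. Relation (a) is immediate, since $\gamma_i + v^{-1} = e_i$ and $\gamma_i - v = e_i - \delta$, so that $(\gamma_i+v^{-1})(\gamma_i-v) = e_i^2 - \delta e_i$, which vanishes by \eqref{e:TL}. Relation (c) is equally direct: $\gamma_i$ and $\gamma_j$ differ from $e_i$, $e_j$ by the central scalar $v^{-1}$, so $[\gamma_i,\gamma_j] = [e_i,e_j]$. For (b) and (d), writing $e = e_i$, $f = e_{i+1}$ and using $e^2 = \delta e$, $f^2 = \delta f$, $efe = e$, $fef = f$ together with $\delta = v+v^{-1}$, one computes
\[
\gamma_i \gamma_{i+1}\gamma_i = -v^{-1}(ef+fe) + v^{-2}(e+f) - v^{-3},
\]
an expression symmetric in $e$ and $f$. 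Since $\gamma_{i+1}\gamma_i\gamma_{i+1}$ is obtained by interchanging $e$ and $f$, relation (b) follows at once. Substituting this formula, together with the expansions of $\gamma_i\gamma_{i+1} + \gamma_{i+1}\gamma_i$ and of $\gamma_i + \gamma_{i+1}$, into the left-hand side of (d) makes every term cancel and yields $0$.

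For the converse, relation (a) gives $e_i^2 = \delta e_i$ and relation (c) gives the far-commutation relations exactly as above. The crux is recovering $e_i e_{i+1} e_i = e_i$. Setting $a = \gamma_i$, $b = \gamma_{i+1}$ and using only (a) in the form $a^2 = (v - v^{-1})a + 1$, a direct expansion of $(a+v^{-1})(b+v^{-1})(a+v^{-1})$ gives
\[
e_i e_{i+1} e_i - e_i = v^{-3}\bigl(v^3 aba + v^2(ab+ba) + v(a+b) + 1\bigr),
\]
whose right-hand side is $v^{-3}$ times the left-hand side of (d). Thus (a) and (d) together force $e_i e_{i+1} e_i = e_i$; the symmetric relation $e_{i+1}e_ie_{i+1} = e_{i+1}$ then follows by the same computation after using the braid relation (b) to equate $aba$ and $bab$. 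This establishes both ideal containments, hence the isomorphism.

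I expect the only genuine work --- and the one place to watch signs and powers of $v$ --- to be the twin bilinear expansions of $\gamma_i\gamma_{i+1}\gamma_i$ and of $(a+v^{-1})(b+v^{-1})(a+v^{-1})$; everything else is bookkeeping. The conceptual point worth flagging is that relation (d) is precisely the Temperley--Lieb relation $e_ie_{i+1}e_i = e_i$ transported through the shift $e_i = \gamma_i + v^{-1}$, while (a)--(c) are the quadratic, braid, and far-commutation relations of Iwahori--Hecke type; this is exactly what exhibits $\TL_n(\delta)$ as a quotient of the Hecke algebra.
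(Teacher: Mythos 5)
Your proof is correct and follows essentially the same route as the paper: both expand the triple products under the shift $e_i = \gamma_i + v^{-1}$, use (a) to reduce squares, and identify relation (d) with the Temperley--Lieb relation $e_ie_{i+1}e_i = e_i$, with (b) emerging from the symmetry of the expansion. Your explicit symmetric formula for $\gamma_i\gamma_{i+1}\gamma_i$ and your spelled-out converse (which the paper leaves to the reader) are only organizational refinements, not a different argument.
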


\begin{proof}
One easily checks that the relation $e_i^2=\delta e_i$ is equivalent
to (a). Thus, we may replace $\gamma_i^2$ by $(v-v^{-1})\gamma_i + 1$
in the expansion
\begin{align*}
e_i&e_{i+1}e_i  \\ &= \gamma_i\gamma_{i+1}\gamma_i +
v^{-1}(\gamma_i\gamma_{i+1} + \gamma_{i+1}\gamma_i + \gamma_i^2) +
v^{-2}(2\gamma_i + \gamma_{i+1}) + v^{-3}
\intertext{to obtain the simplification}
&= \gamma_i\gamma_{i+1}\gamma_i +
v^{-1}(\gamma_i\gamma_{i+1} + \gamma_{i+1}\gamma_i) +
v^{-2}(\gamma_i + \gamma_{i+1}) + v^{-3} + \gamma_i + v^{-1}.
\end{align*}
It follows that the relation $e_ie_{i+1}e_i - e_i = 0$ is equivalent to
\[
\gamma_i\gamma_{i+1}\gamma_i + v^{-1}(\gamma_i\gamma_{i+1} +
\gamma_{i+1}\gamma_i) + v^{-2}(\gamma_i+\gamma_{i+1}) + v^{-3} = 0.
\]
This in turn is equivalent to (d). Interchanging $i$ and $i+1$ in the
argument shows that the relation $e_{i+1}e_ie_{i+1} - e_{i+1}= 0$ is
equivalent to
\[
\gamma_{i+1}\gamma_i\gamma_{i+1} + v^{-1}(\gamma_i\gamma_{i+1} +
\gamma_{i+1}\gamma_i) + v^{-2}(\gamma_i+\gamma_{i+1}) + v^{-3} = 0.
\]
Comparing the last two equivalences shows that (b) holds. Finally, (c)
is clear. On the other hand, if one starts with elements $\gamma_i$
satisfying relations (a)--(d) then by setting $e_i = \gamma_i+v^{-1}$
the defining relations \eqref{e:TL} for $\TL_n(\delta)$ may be
deduced.
\end{proof}

We continue to work over a commutative ring $\Bbbk$ with $1$.
Recall \cite{Jimbo} (see also \cite{Lu:Hecke}) that the Iwahori--Hecke
algebra $\HH_n$ of type A may be defined as the $\Bbbk$-algebra with
$1$ on generators $T_1, \dots, T_{n-1}$ subject to the relations
\begin{equation}\label{e:Hecke}
\begin{gathered}
  (T_i+v^{-1})(T_i-v) = 0\\
  T_iT_jT_i = T_jT_iT_j \text{ if } |i-j|=1\\
  T_iT_j = T_jT_i \text{ if } |i-j|>1. 
\end{gathered}
\end{equation}
The generators $T_i$ are invertible, with
\begin{equation*}
T_i^{-1} = T_i+v^{-1}-v.
\end{equation*}
We immediately have the following consequence of
Proposition~\ref{p:TL-alt}; compare with \cite{GHJ}*{Cor.~2.11.2}.

\begin{cor}\label{c:TL-quo}
Suppose that $\Bbbk$ is a unital commutative ring containing an
invertible element $v$, and that $\delta = v+v^{-1}$.  There exists a
surjective algebra homomorphism
\[
\psi_n: \HH_n \to \TL_n(\delta)
\]
defined by $\psi_n(T_i)=\gamma_i=e_i-v^{-1}$ for $i = 1, \dots, n-1$.
If $n=1$ or $n=2$ it is an isomorphism.  If $n \ge 3$, the kernel of
$\psi_n$ is the two-sided ideal of $\HH_n$ generated by
\[
v^3 T_1T_2T_1 + v^2(T_1T_2+T_2T_1) + v(T_1+T_2) + 1. 
\]
Furthermore, the diagram (in which the horizontal maps are the canonical
inclusions)
\[
\begin{tikzcd}
\HH_n \arrow[r] \arrow[d, "\psi_n"] & \HH_{n+1} \arrow[d, "\psi_{n+1}"] \\
\TL_n(\delta) \arrow[r] & \TL_{n+1}(\delta)
\end{tikzcd}
\]
is commutative.
\end{cor}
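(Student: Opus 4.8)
The plan is to extract $\psi_n$ directly from Proposition~\ref{p:TL-alt} and then determine its kernel by comparing presentations. First, relations (a), (b), (c) of Proposition~\ref{p:TL-alt}, read in the elements $\gamma_i = e_i - v^{-1}$, are verbatim the defining relations \eqref{e:Hecke} of $\HH_n$ (with $T_i$ replaced by $\gamma_i$). The universal property of $\HH_n$ therefore yields a unique $\Bbbk$-algebra homomorphism $\psi_n\colon \HH_n \to \TL_n(\delta)$ with $\psi_n(T_i) = \gamma_i$, and it is surjective since $e_i = \psi_n(T_i + v^{-1})$ and the $e_i$ generate $\TL_n(\delta)$. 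For $n = 1$ both algebras equal $\Bbbk$ and $\psi_1 = \mathrm{id}$. For $n = 2$ both are free of rank $2$, with bases $\{1, T_1\}$ and $\{1, e_1\}$ (the latter by the rank computation of Section~\ref{s:dia}); the matrix of $\psi_2$ in these bases is unitriangular, hence invertible, so $\psi_2$ is an isomorphism.

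Now take $n \ge 3$ and write $R_i = v^3 T_iT_{i+1}T_i + v^2(T_iT_{i+1} + T_{i+1}T_i) + v(T_i + T_{i+1}) + 1$, so the distinguished generator is $R_1$. Applying $\psi_n$ to $R_i$ produces exactly relation (d) of Proposition~\ref{p:TL-alt} for the index $i$, which holds in $\TL_n(\delta)$; thus every $R_i$ lies in $\ker\psi_n$. Conversely, Proposition~\ref{p:TL-alt} presents $\TL_n(\delta)$ by the generators $\gamma_i$ subject to (a)--(d), that is, by the defining relations of $\HH_n$ together with the extra relators $R_1, \dots, R_{n-2}$. Since $\psi_n$ is precisely the canonical map adjoining these relators, $\ker\psi_n$ is the two-sided ideal $(R_1, \dots, R_{n-2})$ of $\HH_n$. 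The corollary thus reduces to showing that this ideal is already generated by $R_1$, i.e.\ that $R_i \in (R_1)$ for every $i$.

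The clean way to see this is to recognize $R_i$ as the parabolic $v$-symmetrizer on three consecutive strands: in terms of the standard basis $\{T_w : w \in \Sym_n\}$ of $\HH_n$ one has $R_i = \sum_{w \in W_i} v^{\ell(w)} T_w$, where $W_i = \langle s_i, s_{i+1}\rangle \cong \Sym_3$ and $\ell$ is the Coxeter length. The subgroups $W_1, \dots, W_{n-2}$ are pairwise conjugate in $\Sym_n$; letting $d_i$ be the order-preserving shift $1,2,3 \mapsto i, i+1, i+2$ (the minimal-length element with $d_i W_1 d_i^{-1} = W_i$), invertibility of $T_{d_i}$ together with the length-additivity $T_{d_i}T_w = T_{d_iw}$ for $w \in W_1$ and the length-preservation $\ell(d_i w d_i^{-1}) = \ell(w)$ give $T_{d_i} R_1 T_{d_i}^{-1} = \sum_{w \in W_1} v^{\ell(w)} T_{d_i w d_i^{-1}} = R_i$. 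Hence $R_i \in (R_1)$, so $\ker\psi_n = (R_1, \dots, R_{n-2}) = (R_1)$, as claimed. I expect the only genuine work to be the verification of these length statements for $d_i$; this is the main obstacle, although it is routine in the theory of the standard basis of $\HH_n$ (alternatively, one can produce the conjugating element elementarily from the braid identity $T_{k+1} = (T_kT_{k+1})\,T_k\,(T_kT_{k+1})^{-1}$, at the cost of a messier computation). Finally, the displayed square commutes because both composites $\HH_n \to \TL_{n+1}(\delta)$ send each $T_i$ to $e_i - v^{-1}$ --- the horizontal inclusions being $T_i \mapsto T_i$ and $e_i \mapsto e_i$ --- and the $T_i$ generate $\HH_n$.
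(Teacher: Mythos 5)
Your proof is correct, and its skeleton matches the paper's: $\psi_n$ and its surjectivity come from Proposition~\ref{p:TL-alt}, the kernel for $n\ge 3$ is identified (by comparing presentations) as the two-sided ideal generated by the relators $R_1,\dots,R_{n-2}$, and one then shows every $R_i$ lies in $(R_1)$ by conjugating by an invertible element. Where you genuinely diverge is in that last, key step. The paper stays completely elementary: from the braid relations it derives $(T_1T_2\cdots T_{n-1})\,T_k\,(T_{n-1}^{-1}\cdots T_2^{-1}T_1^{-1})=T_{k+1}$ for $k=1,\dots,n-2$, so conjugation by the single fixed element $T_1\cdots T_{n-1}$ carries $x_k$ to $x_{k+1}$, and iterating gives $\ker\psi_n=(x_1)$ with no input beyond the defining relations and invertibility of the $T_i$. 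You instead recognize $R_i$ as the parabolic $v$-symmetrizer $\sum_{w\in\langle s_i,s_{i+1}\rangle}v^{\ell(w)}T_w$ and conjugate by $T_{d_i}$ for a distinguished coset representative $d_i$. This buys a direct (non-iterative) conjugation $R_1\mapsto R_i$ and a conceptually illuminating description of the kernel generator, at the cost of invoking Matsumoto's theorem (well-definedness of $T_w$) and the combinatorics of minimal-length coset representatives---machinery the paper avoids. Note that your argument does remain valid over an arbitrary commutative ring with $v$ invertible, since you only use well-definedness of $T_w$ and the rule $T_xT_y=T_{xy}$ when lengths add, not freeness of $\HH_n$. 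One caution about your closing parenthetical: conjugation by $T_kT_{k+1}$ sends $T_k$ to $T_{k+1}$ but sends $T_{k+1}$ to $T_kT_{k+1}T_k^{-1}$, which is not $T_{k+2}$ (it lifts the transposition $(k,k+2)$), so that identity alone does not carry $R_k$ to $R_{k+1}$; the elementary route really does require conjugating by an element shifting both indices at once, such as the paper's $T_1\cdots T_{n-1}$. Since this aside is not part of your main argument, it does not affect correctness.
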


\begin{proof}
The existence of $\psi_n$ follows from the definition of $\HH_n$ and
Proposition~\ref{p:TL-alt}. It is easy to check that $\psi_1$ and
$\psi_2$ are isomorphisms, and that the kernel of $\psi_n$ for $n \ge
3$ is generated by all
\[
x_i = v^3 T_iT_{i+1}T_i + v^2(T_iT_{i+1}+T_{i+1}T_i) + v(T_i+T_{i+1}) + 1
\]
for $i = 1, \dots, n-2$. Each $T_i$ is invertible, with $T_i^{-1} =
T_i+v^{-1}-v$. From the braid relations for the $T_i$ we have
\[
(T_1T_2 \cdots T_{n-1}) T_k (T_{n-1}^{-1} \cdots T_2^{-1}T_1^{-1}) = T_{k+1}
\]
for all $k = 1, \dots, n-2$. (This is no typo; it really is necessary to
conjugate by the fixed element $T_1 \cdots T_{n-1}$ for each $k$.)
Thus,
\[
(T_1T_2 \cdots T_{n-1}) x_k (T_{n-1}^{-1} \cdots T_2^{-1}T_1^{-1}) = x_{k+1}
\]
for all $k = 1, \dots, n-3$. This shows that the kernel is generated
by $x_1$, as required. The final claim is obvious.
\end{proof}

\begin{rmk}\label{r:HH2}
The original version of $\HH_n$ in the literature
(see \cites{KL:79,DJ1,DJ2,DJ3,DJ4,Murphy1,Murphy2})
was defined with the quadratic relation 
\[
(T_i+1)(T_i-q) = 0 \quad\text{where}\quad q=v^2
\]
and with the remaining relations the same. This leads to an isomorphic
algebra, but many formulas look different. To enable comparisons
between different versions, it is convenient to define (see \cites{BW,
  Bigelow}) the two-parameter Iwahori--Hecke algebra $\HH_n(q_1,q_2)$
to be the $\Bbbk$-algebra with $1$ defined by generators $T_1, \dots,
T_{n-1}$ with the defining relations
\begin{equation*}
  \begin{gathered}
    (T_i-q_1)(T_i-q_2) = 0 \\
  T_iT_jT_i = T_jT_iT_j \text{ if } |i-j|=1 \\
  T_iT_j = T_jT_i \text{ if } |i-j|>1 .
\end{gathered}
\end{equation*}
In this notation, the original version of $\HH_n$ is $\HH_n(-1,q)$ and
the version defined in \eqref{e:Hecke} is $H_n(-v^{-1},v)$, where
$q=v^2$. The algebra map defined by $T_i \mapsto v^{-1}T_i$ defines an
isomorphism
\[
\HH_n(-v^{-1},v) \cong \HH_n(-1,v^2).
\]
Assuming that $q_1$ and $q_2$ are invertible in $\Bbbk$ and setting $q
= -q_2/q_1$, one has an algebra isomorphism
\[
\HH_n(-1,q) \cong \HH_n(q_1,q_2)
\]
defined by $T_i \mapsto -q_1^{-1}T_i$ for all $i$. This means that
$\TL_n(v+v^{-1})$ can be constructed as a quotient of any version of
$\HH_n$, provided only that the eigenvalues of the $T_i$ are
invertible in $\Bbbk$ (and $q=v^2$).
\end{rmk}

\section{Skew shapes and $321$-avoiding permutations}\label{s:skew}%
\noindent
The purpose of this section is describe a very new algorithm that
efficiently computes the Jones normal form in Theorem \ref{t:Jones}
(and the dual version in Remark~\ref{r:dual-JNF}) corresponding to a
given $n$-diagram, without the need to apply commutation
relations. The algorithm is due to Chris Bowman and first appeared in
\cite{Bowman-et-al} in a more general context; see also
\cite{Bowman:book}*{Thm.~5.2.3}. The version given here has been mildly
adapted. As usual, we identify partitions $\lambda$ with Young
diagrams (shapes). Recall \cites{Macd,Fulton} that a skew shape
$\lambda \setminus \mu$ for partitions $\lambda$, $\mu$ with $\mu
\subset \lambda$ is defined as their set-theoretic difference.

Fix an origin $(0,0)$ in the euclidean plane $\R^2 = \R \times \R$.
Establish compass directions for the plane in which north points to
the top of the page.  Pick an orthonormal coordinate system for $\R^2$
in which the positive directions are \emph{south} and \emph{east},
respectively, so that for $x>0$, $y>0$ the point $(x,y)$ is located
$x$ units south and $y$ units east of the origin. These choices are
dictated by the usual ``English notation'' for tableaux, which are
regarded as consisting of rows and columns numbered similarly to the
way matrix entries are numbered.

\begin{algo}\label{a:skew}
Let $d$ be a given $n$-diagram. Working from left to right, number its
northern vertices by $1,\dots, n$ and number its southern vertices by
$1', \dots, n'$. For any $k$ in $\{1, \dots, n\}$, define
\[
\xi(k) = 
\begin{cases}
  W & \text{if the vertex $k$ is connected to a
    vertex strictly to its right}\\
  S & \text{otherwise}
\end{cases}
\]
and
\[
\xi(k') = 
\begin{cases}
  N & \text{if the vertex $k'$ is connected to a
    vertex weakly to its right}\\
  E & \text{otherwise.}
\end{cases}
\]
Starting at the vertex $1$ in the upper left corner of $d$ and proceeding
clockwise through the vertices, we obtain the vector
\[
\xi(d) = (\xi(1), \dots, \xi(n), \xi(n'), \dots, \xi(1'))
\]
that records the sequence of compass directions. 
The vector $\xi(d)$ determines a closed polygonal path in $\R \times
\R$ (having vertices in $\Z \times \Z$) as follows:
\begin{itemize}
\item Start at the point $(0,0)$.
\item Following the compass directions in the sequence $\xi(d)$, move
  one unit in the prescribed direction at each step.
\end{itemize}
The path always consists of $2n$ unit length segments.  By discarding
all unit length segments which are traversed twice, we obtain a unique
skew shape $\lambda \setminus \mu$. Fill each unit box in the skew
shape with the number $\omega(i,j) = i-j$ where $(i,j)$ is the
southeastern point of the box.  With this filling, we obtain a labeled
skew shape $(\lambda \setminus \mu, \omega)$ in the sense of
\cite{BJS93}*{p.~363}.
\end{algo}

\begin{example}\label{ex:polygonal}
The $9$-diagram $d$ displayed below
\[
d \; = \;
\begin{tikzpicture}[scale = 0.35,thick, baseline={(0,-1ex/2)}] 
  \tikzstyle{vertex} = [shape = circle, minimum size = 4pt,
    inner sep = 1pt, fill=black] 
\node[vertex] (G--9) at (12.0, -1) [shape = circle, draw] {}; 
\node[vertex] (G--8) at (10.5, -1) [shape = circle, draw] {}; 
\node[vertex] (G--7) at (9.0, -1) [shape = circle, draw] {}; 
\node[vertex] (G-7) at (9.0, 1) [shape = circle, draw] {}; 
\node[vertex] (G--6) at (7.5, -1) [shape = circle, draw] {}; 
\node[vertex] (G-6) at (7.5, 1) [shape = circle, draw] {}; 
\node[vertex] (G--5) at (6.0, -1) [shape = circle, draw] {}; 
\node[vertex] (G-1) at (0.0, 1) [shape = circle, draw] {}; 
\node[vertex] (G--4) at (4.5, -1) [shape = circle, draw] {}; 
\node[vertex] (G--1) at (0.0, -1) [shape = circle, draw] {}; 
\node[vertex] (G--3) at (3.0, -1) [shape = circle, draw] {}; 
\node[vertex] (G--2) at (1.5, -1) [shape = circle, draw] {}; 
\node[vertex] (G-2) at (1.5, 1) [shape = circle, draw] {}; 
\node[vertex] (G-3) at (3.0, 1) [shape = circle, draw] {}; 
\node[vertex] (G-4) at (4.5, 1) [shape = circle, draw] {}; 
\node[vertex] (G-5) at (6.0, 1) [shape = circle, draw] {}; 
\node[vertex] (G-8) at (10.5, 1) [shape = circle, draw] {}; 
\node[vertex] (G-9) at (12.0, 1) [shape = circle, draw] {}; 
\draw[] (G--9) .. controls +(-0.5, 0.5) and +(0.5, 0.5) .. (G--8); 
\draw[] (G-7) .. controls +(0, -1) and +(0, 1) .. (G--7); 
\draw[] (G-6) .. controls +(0, -1) and +(0, 1) .. (G--6); 
\draw[] (G-1) .. controls +(1, -1) and +(-1, 1) .. (G--5); 
\draw[] (G--4) .. controls +(-0.8, 0.8) and +(0.8, 0.8) .. (G--1); 
\draw[] (G--3) .. controls +(-0.5, 0.5) and +(0.5, 0.5) .. (G--2); 
\draw[] (G-2) .. controls +(0.5, -0.5) and +(-0.5, -0.5) .. (G-3); 
\draw[] (G-4) .. controls +(0.5, -0.5) and +(-0.5, -0.5) .. (G-5); 
\draw[] (G-8) .. controls +(0.5, -0.5) and +(-0.5, -0.5) .. (G-9); 
\end{tikzpicture}
\]
is associated by Algorithm \ref{a:skew} to the sequence
\[
\xi(d) = (W,W,S,W,S,S,S,W,S,E,N,N,N,E,E,E,N,N).
\]
The sequence corresponds to the polygonal path traced out in the
figure on the left below (the origin is in its upper right corner)
\[
\begin{minipage}{1.6cm}
\begin{tikzpicture}[scale = 0.4,thick, baseline={(0,-1ex/2)}] 
  \draw[very thick]
(0,0)--(-1,0)--(-2,0)--(-2,-1)--(-3,-1)--(-3,-2)--(-3,-3)--(-3,-4)--(-4,-4);
  \draw[very thick]
(-4,-4)--(-4,-5)--(-3,-5)--(-3,-4)--(-3,-3)--(-3,-2)--(-2,-2)--(-1,-2)--(0,-2);
  \draw[very thick]
(0,-2)--(0,-1)--(0,0);
  \draw[step=1,dashed,gray,very thin] (-4,-5) grid (0,0);
\end{tikzpicture}
\end{minipage}
\qquad \qquad
\begin{minipage}{1.6cm}
\ytableausetup{boxsize=0.4cm}
\begin{ytableau}
  *(lightgray) & *(lightgray) & 2 & 1\\
  *(lightgray) & 4 & 3 & 2\\
  *(lightgray)\\
  *(lightgray)\\
  8
\end{ytableau}
\end{minipage}
\]
and its corresponding labeled skew shape $(\lambda \setminus \mu,
\omega)$ is displayed in the figure on the right above. Here we have
taken $\lambda = (4^2,1^3)$ and $\mu = (2,1^3)$ in the standard
exponential notation for partitions. (Whenever we display skew shapes
$\lambda \setminus \mu$, we will shade the boxes in $\mu$, as we did
above.)
\end{example}

\begin{rmk}
The alert reader will have noticed that the segments that get
discarded (the ones which are traversed twice) in
Algorithm~\ref{a:skew} always correspond to vertical edges (edges
pairing $i$ and $i'$) in the $n$-diagram $d$, which are labeled by $S$
and $N$ respectively in the sequence $\xi(d)$. The algorithm can be
reformulated by using $W$ and $E$ labels instead; this leads to an
equivalent theory. In fact, one can randomly label some of those
twice-traversed vertical edges by $S$ and $N$ and the rest by $W$ and
$E$ without sacrificing anything. For instance, the diagram $d$ in
Example~\ref{ex:polygonal} has the following associated polygonal
paths
\[
\begin{tikzpicture}[scale = 0.4,thick, baseline={(0,-1ex/2)}] 
  \draw[very thick]
(0,0)--(-1,0)--(-2,0)--(-2,-1)--(-3,-1)--(-3,-2)--(-4,-2)--(-5,-2)--(-6,-2);
  \draw[very thick]
(-6,-2)--(-6,-3)--(-5,-3)--(-5,-2)--(-4,-2)--(-3,-2)--(-2,-2)--(-1,-2)--(0,-2);
  \draw[very thick]
(0,-2)--(0,-1)--(0,0);
  \draw[step=1,dashed,gray,very thin] (-6,-3) grid (0,0);
\end{tikzpicture}
\qquad
\begin{tikzpicture}[scale = 0.4,thick, baseline={(0,-1ex/2)}] 
  \draw[very thick]
(0,0)--(-1,0)--(-2,0)--(-2,-1)--(-3,-1)--(-3,-2)--(-4,-2)--(-4,-3)--(-5,-3);
  \draw[very thick]
(-5,-3)--(-5,-4)--(-4,-4)--(-4,-3)--(-4,-2)--(-3,-2)--(-2,-2)--(-1,-2)--(0,-2);
  \draw[very thick]
(0,-2)--(0,-1)--(0,0);
  \draw[step=1,dashed,gray,very thin] (-5,-4) grid (0,0);
\end{tikzpicture}
\qquad
\begin{tikzpicture}[scale = 0.4,thick, baseline={(0,-1ex/2)}] 
  \draw[very thick]
(0,0)--(-1,0)--(-2,0)--(-2,-1)--(-3,-1)--(-3,-2)--(-3,-3)--(-4,-3)--(-5,-3);
  \draw[very thick]
(-5,-3)--(-5,-4)--(-4,-4)--(-4,-3)--(-3,-3)--(-3,-2)--(-2,-2)--(-1,-2)--(0,-2);
  \draw[very thick]
(0,-2)--(0,-1)--(0,0);
  \draw[step=1,dashed,gray,very thin] (-5,-4) grid (0,0);
\end{tikzpicture}
\qquad
\begin{tikzpicture}[scale = 0.4,thick, baseline={(0,-1ex/2)}] 
  \draw[very thick]
(0,0)--(-1,0)--(-2,0)--(-2,-1)--(-3,-1)--(-3,-2)--(-3,-3)--(-3,-4)--(-4,-4);
  \draw[very thick]
(-4,-4)--(-4,-5)--(-3,-5)--(-3,-4)--(-3,-3)--(-3,-2)--(-2,-2)--(-1,-2)--(0,-2);
  \draw[very thick]
(0,-2)--(0,-1)--(0,0);
  \draw[step=1,dashed,gray,very thin] (-4,-5) grid (0,0);
\end{tikzpicture}
\]
depending on the four possible choices of labeling of its two
twice-traversed vertical edges; these choices give different labeled skew
shapes
\[
\ytableausetup{boxsize=1em}
\begin{ytableau}
  *(lightgray) & *(lightgray) & *(lightgray) & *(lightgray) & 2 & 1\\
  *(lightgray) & *(lightgray) & *(lightgray) & 4 & 3 & 2\\
  8
\end{ytableau}\qquad
\begin{ytableau}
  *(lightgray) & *(lightgray) & *(lightgray) & 2 & 1\\
  *(lightgray) & *(lightgray) & 4 & 3 & 2\\
  *(lightgray)\\
  8
\end{ytableau}\qquad
\begin{ytableau}
  *(lightgray) & *(lightgray) & *(lightgray) & 2 & 1\\
  *(lightgray) & *(lightgray) & 4 & 3 & 2\\
  *(lightgray) & *(lightgray)\\
  8
\end{ytableau}\qquad
\begin{ytableau}
  *(lightgray) & *(lightgray) & 2 & 1\\
  *(lightgray) & 4 & 3 & 2\\
  *(lightgray)\\
  *(lightgray)\\
  8
\end{ytableau}
\]
as shown above. Such ambiguities are addressed by Definition
\ref{d:BJS} below (and explain the need for including it). Notice that
all four of the labeled skew shapes have the same row reading sequence
$(2,1,4,3,2,8)$ and the same column reading sequence
$(8,4,2,3,1,2)$. Here, by \emph{row reading sequence} we mean the
sequence of labels read in order across the rows from first to last;
similarly, the \emph{column reading sequence} is defined by the labels
read in order down the columns taken from left to right.
\end{rmk}

Given a labeled skew shape, its \emph{row (resp., column)}
\emph{reading word} is the product $e_{i_1} e_{i_2} \dots e_{i_l}$
corresponding to its row (resp., column) reading sequence $(i_1, i_2,
\dots, i_l)$.

We will identify a given skew shape $\theta = \lambda \setminus \mu$
with a subset of $\Z \times \Z$ by embedding $\theta$ in $\R^2$ as
above, so that corner points of boxes lie in $\Z \times \Z$, and by
identifying each box in $\theta$ with the coordinate pair of its
southeastern (i.e., lower right) corner point. We need the following
definition, based on \cite{BJS93}*{ p.~363}.

\begin{defn}\label{d:BJS}
We say that labeled skew shapes $(\lambda \setminus \mu, \omega)$ and
$(\alpha \setminus \beta, \omega)$ are \emph{BJS-equivalent} if there
exists an order-preserving (we can use the product order on $\Z
\times \Z$) bijection
\[
f: \lambda \setminus \mu \to \alpha \setminus \beta
\]
which preserves labels; that is: for all $(i,j)$ in $\lambda \setminus
\mu$, the condition $f(i,j) = (h,k)$ implies that $i-j = h-k$.
\end{defn}

The row and column reading sequences (and words) of
equivalent skew shapes are the same.  The following is the main result
of this section. The proof given below relies on results in
\cite{BJS93}.

\begin{thm}\label{t:321}
  Let $\Theta$ be the map $d \mapsto (\lambda \setminus \mu, \omega)$
  defined by Algorithm~\ref{a:skew}. Then:
  \begin{enumerate}
  \item $\Theta$ induces a bijection between the set of $n$-diagrams
    and the set of BJS-equivalence classes of labeled skew shapes
    having numbers all less than $n$.
  \item For any $n$-diagram $d$, the row reading word of $\Theta(d)$
    is the Jones normal form of $d$, in the sense of
    Theorem~\ref{t:Jones}, and the column reading word is the dual
    Jones normal form of $d$, in the sense of Remark~\ref{r:dual-JNF}.
  \end{enumerate}
\end{thm}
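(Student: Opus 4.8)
The plan is to recognize the skew shape produced by Algorithm~\ref{a:skew} as (a representative of) the \emph{heap} of a fully commutative permutation, and then to import the combinatorics of reduced words from \cite{BJS93}. The starting point is the standard dictionary between the basis of $\TL_n$ and the symmetric group $\Sym_n$ (with simple transpositions $s_1,\dots,s_{n-1}$ matching $e_1,\dots,e_{n-1}$): a reduced word $e_{i_1}\cdots e_{i_l}$ gives a reduced expression $s_{i_1}\cdots s_{i_l}$ of some $w\in\Sym_n$, and since $\TL_n$ retains the commutations $e_ie_j=e_je_i$ for $|i-j|>1$ while any subword $e_ie_{i\pm1}e_i$ would shorten the word, the $w$ arising this way are exactly the fully commutative (equivalently, $321$-avoiding) elements. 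As all reduced expressions of a fixed fully commutative $w$ differ only by commutations, they are equal in $\TL_n$ and represent a single $n$-diagram; conversely distinct $w$ give distinct diagrams. Thus $n$-diagrams correspond bijectively to fully commutative $w\in\Sym_n$, and each diagram $d$ determines such a $w=w(d)$ whose reduced expressions are precisely the reduced words representing $d$.

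By \cite{BJS93}, the content-labeled skew shapes --- boxes $(i,j)$ filled by $\omega(i,j)=i-j$ --- taken up to the order- and label-preserving bijections of Definition~\ref{d:BJS}, are in bijection with fully commutative permutations; the reduced words of $w$ are exactly the reading words of its shape compatible with the content poset, and in particular the row reading word and the column reading word are reduced words of $w$. Granting the geometric fact that $\Theta(d)$ is such a shape and that its BJS-class corresponds under \cite{BJS93} to $w(d)$ (see the final paragraph), part~(a) follows by composing this correspondence with the dictionary of the first paragraph, the restriction to labels less than $n$ matching exactly the condition $w\in\Sym_n$. The labeling ambiguity noted in the Remark preceding Definition~\ref{d:BJS} --- each vertical through-strand $\{i,i'\}$ may be recorded by $S,N$ or by $W,E$ --- merely exchanges one skew-shape representative of the heap for another, that is, moves within a single BJS-class, so $\Theta$ is well defined into BJS-classes.

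For part~(b), the row reading word of $\Theta(d)$ is, by the previous paragraph, a reduced word representing $d$. It is moreover in Jones normal form: since $\omega(i,j)=i-j$, the labels decrease by $1$ along each row and increase by $1$ down each column, so each nonempty row is a descending run $e_{j_i}e_{j_i-1}\cdots e_{k_i}$, while the staircase profile of $\lambda\setminus\mu$ forces the inequalities $0<j_1<\cdots<j_r<n$, $0<k_1<\cdots<k_r<n$, and $j_i\ge k_i$ of Theorem~\ref{t:Jones} when the rows are read from top to bottom (these amount to the leftmost and rightmost contents of a row strictly increasing as one descends, which holds because $\lambda$ and $\mu$ are partitions). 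By the bijection between $n$-diagrams and words in Jones normal form established in Section~\ref{s:dia}, each $n$-diagram has a unique reduced word in Jones normal form, so this row reading word must be the Jones normal form of $d$. The assertion that the column reading word is the dual Jones normal form of Remark~\ref{r:dual-JNF} follows from the left--right symmetric argument, reading the consecutive ascending columns with the inequalities reversed.

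The real work, and the main obstacle, is the geometric identification assumed above: that the clockwise compass walk of Algorithm~\ref{a:skew} traces the boundary of the content-labeled heap of $w(d)$. One must check that the unit segments traversed twice (and hence discarded) are precisely those coming from vertical through-strands $\{i,i'\}$, that the surviving region is a genuine skew shape $\lambda\setminus\mu$, and that its diagonal filling is $i-j$. The care here is almost entirely in keeping the conventions aligned: the clockwise traversal starting at vertex~$1$, the English-notation coordinates with south and east positive, and the asymmetric rule assigning $W/S$ to the northern vertices and $N/E$ to the southern ones. Once this identification is in place, both parts of the theorem are transcriptions of the cited results of \cite{BJS93} together with the dictionary of the first paragraph.
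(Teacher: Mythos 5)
Your route is the same as the paper's: identify $n$-diagrams with fully commutative ($=321$-avoiding) permutations, invoke \cite{BJS93} for the correspondence between such permutations and labeled skew shapes up to BJS-equivalence, and then observe that the content labeling $\omega(i,j)=i-j$ forces the row reading word into Jones normal form and the column reading word into dual normal form, with uniqueness of the normal form per diagram (from Section~\ref{s:dia}) finishing part~(b). Your part~(b) argument, your handling of the $S/N$ versus $W/E$ ambiguity, and even your deferral of the verification that the compass walk of Algorithm~\ref{a:skew} actually produces the BJS shape of $w(d)$ are all consistent with the paper, which is equally implicit on that last point.

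The genuine gap is in your first paragraph, the ``standard dictionary.'' Well-definedness is fine: by Stembridge, all reduced expressions of a fully commutative $w$ are commutation-equivalent, hence yield a single element of $\TL_n$ and a single diagram. But the two remaining claims --- that every $n$-diagram arises this way (equivalently, that a reduced word in $\TL_n$ corresponds to a Coxeter-reduced expression of a fully commutative element), and that distinct $w$ give \emph{distinct} diagrams --- are asserted, not proved, and neither follows from the commutation observation. The obstruction is that reducedness and distinctness do not transfer for free between $\Sym_n$ and $\TL_n$: the algebra $\TL_n$ imposes relations with no Coxeter analogue ($e_ie_{i\pm1}e_i=e_i$, $e_i^2=\delta e_i$), and conversely the braid move $s_is_{i+1}s_i=s_{i+1}s_is_{i+1}$ is false for the $e_i$, so Tits-type manipulations of words cannot be carried across. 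Injectivity in particular is exactly what the paper's citation of Fan \cites{Fan:95,Fan:97} supplies: the images of $\{T_w : w\ 321\text{-avoiding}\}$ under the quotient map $\psi_n$ of Corollary~\ref{c:TL-quo} form a basis of $\TL_n$. To close the gap you must either cite Fan, as the paper does, or argue by counting: show the map hits every diagram (e.g., check that the Jones normal form word of a diagram is a Coxeter-reduced expression of a $321$-avoiding permutation) and then use that both sets have cardinality $\Cat_{2n,n}$, the $n$th Catalan number, which classically also counts $321$-avoiding permutations in $\Sym_n$.
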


\begin{proof}
Recall that a permutation $\pi$ is $321$-avoiding if it never sends
any $i<j<k$ to $\pi(i) > \pi(j) > \pi(k)$. In other words, $\pi$ is
$321$-avoiding if and only if it has no decreasing subsequence of
length three when written in one-line notation. In
\cite{BJS93}*{Thm.~2.1} it is proved that a permutation is
$321$-avoiding if and only if it is \emph{fully commutative} in the
sense of \cite{Stembridge}. (This means that any reduced expression in
terms of the usual Coxeter generators $s_i$ is obtained from any other
by performing a finite sequence of commutations of the form
$s_is_j=s_js_i$ where $|i-j|>1$; the notion generalizes to any Coxeter
group.)

Now consider the quotient map $\HH_n(-v^{-1},v) \to \TL_n(v+v^{-1})$
in Corollary~\ref{c:TL-quo}.  In his dissertation
\cites{Fan:95,Fan:97}, C.K.~Fan proved a more general result that
implies that the image of the set
\[
\{T_w: w \in \Sym_n, \text{ $w$ is $321$-avoiding}\}
\]
under the above map is a basis. (Our quotient map is a renormalization
of his.) This means that we may index $n$-diagrams by $321$-avoiding
permutations. In fact, the bijection
\[
\{\text{321-avoiding permutations in } \Sym_n\} \to
\{n\text{-diagrams}\}
\]
is given by sending any reduced expression for $w$ in terms of the
Coxeter generators $s_i$ to the corresponding reduced expression in
which the $s_i$ are replaced by $e_i$.

That the map in (a) is a bijection now follows from the bijection
\cite{BJS93}*{\S2} between $321$-avoiding permutations and labeled
skew shapes (under BJS-equivalence). Part (b) follows easily once one
notices that the row reading word of the labeled skew shape
$\Theta(d)$ is always in Jones normal form and the column reading word
is always in dual normal form. These claims follow from the fact that
the numbers in $\Theta(d)$ decrease by one along rows and increase by
one down columns.
\end{proof}

\begin{example}
The Jones normal form of the $9$-diagram in Example~\ref{ex:polygonal}
is $d = (e_2e_1)(e_4e_3e_2)(e_8)$ and its dual normal form is $d =
(e_8)(e_4)(e_2e_3)(e_1e_2)$. These are of course the row and column
reading words, respectively, of the corresponding labeled shew shapes.
\end{example}

\begin{rmk}\label{r:321}
(i) Algorithm~\ref{a:skew} also produces reduced expressions for
  $321$-avoiding permutations, thus giving a new proof of the
  bijection in \cite{BJS93}. One simply draws the permutation diagram
  (in the sense of Brauer algebras) and applies the same method. For
  example, consider the permutation $w$ in $\Sym_9$ given by $w =
  351246798$ in the usual one-line notation. It is depicted by the
  Brauer diagram
\[
\begin{tikzpicture}[scale = 0.35,thick, baseline={(0,-1ex/2)}] 
  \tikzstyle{vertex} = [shape = circle, minimum size = 4pt,
    inner sep = 1pt, fill = black] 
\node[vertex] (G--9) at (12.0, -1) [shape = circle, draw] {}; 
\node[vertex] (G-8) at (10.5, 1) [shape = circle, draw] {}; 
\node[vertex] (G--8) at (10.5, -1) [shape = circle, draw] {}; 
\node[vertex] (G-9) at (12.0, 1) [shape = circle, draw] {}; 
\node[vertex] (G--7) at (9.0, -1) [shape = circle, draw] {}; 
\node[vertex] (G-7) at (9.0, 1) [shape = circle, draw] {}; 
\node[vertex] (G--6) at (7.5, -1) [shape = circle, draw] {}; 
\node[vertex] (G-6) at (7.5, 1) [shape = circle, draw] {}; 
\node[vertex] (G--5) at (6.0, -1) [shape = circle, draw] {}; 
\node[vertex] (G-4) at (4.5, 1) [shape = circle, draw] {}; 
\node[vertex] (G--4) at (4.5, -1) [shape = circle, draw] {}; 
\node[vertex] (G-2) at (1.5, 1) [shape = circle, draw] {}; 
\node[vertex] (G--3) at (3.0, -1) [shape = circle, draw] {}; 
\node[vertex] (G-1) at (0.0, 1) [shape = circle, draw] {}; 
\node[vertex] (G--2) at (1.5, -1) [shape = circle, draw] {}; 
\node[vertex] (G-5) at (6.0, 1) [shape = circle, draw] {}; 
\node[vertex] (G--1) at (0.0, -1) [shape = circle, draw] {}; 
\node[vertex] (G-3) at (3.0, 1) [shape = circle, draw] {}; 
\draw[] (G-8) .. controls +(0.75, -1) and +(-0.75, 1) .. (G--9); 
\draw[] (G-9) .. controls +(-0.75, -1) and +(0.75, 1) .. (G--8); 
\draw[] (G-7) .. controls +(0, -1) and +(0, 1) .. (G--7); 
\draw[] (G-6) .. controls +(0, -1) and +(0, 1) .. (G--6); 
\draw[] (G-4) .. controls +(0.75, -1) and +(-0.75, 1) .. (G--5); 
\draw[] (G-2) .. controls +(1, -1) and +(-1, 1) .. (G--4); 
\draw[] (G-1) .. controls +(1, -1) and +(-1, 1) .. (G--3); 
\draw[] (G-5) .. controls +(-1, -1) and +(1, 1) .. (G--2); 
\draw[] (G-3) .. controls +(-1, -1) and +(1, 1) .. (G--1); 
\end{tikzpicture}
\]
in which $w(i)=j$ is depicted by a strand connecting the $i$th vertex
in the bottom row with the $j$th vertex in the top row. Applying the
algorithm computes the same polygonal path that appears in
Example~\ref{ex:polygonal}.  The row and column reading
words of the corresponding labeled skew shape, written in terms of the
$s_i$ generators, are respectively $(s_2s_1)(s_4s_3s_2)(s_8)$ and
$(s_8)(s_4)(s_2s_3)(s_1s_2)$. Both are reduced expressions
for $w$.

(ii) Let $d$ be an $n$-diagram and $\Theta(d) = (\lambda \setminus
\mu, \omega)$ its labeled skew shape. Let $w \in \Sym_n$ be the
corresponding $321$-avoiding permutation. By \cite{BJS93}*{Cor.~2.1},
the number of reduced expressions for $w$ is the number of standard
tableaux of shape $\lambda \setminus \mu$. This of course is also the
number of reduced expressions for $d$ in terms of the Temperley--Lieb
generators $e_i$.

(iii) A pleasant aspect of the mapping from $n$-diagrams to polygonal
paths is that it distinguishes generators in different $\TL_n$. For
instance, the polygonal path of $e_2$ in $\TL_3$ is different from
that of $e_2$ in $\TL_n$, for any $n>3$. (Indeed, the paths are of
different lengths.)  We display the polygonal paths of $e_2$ in
$\TL_3$ and $\TL_4$ respectively below
\[
\begin{tikzpicture}[scale = 0.4,thick, baseline={(0,-1ex/2)}] 
  \draw[very thick]
(0,0)--(0,-1)--(-1,-1)--(-1,-2)--(0,-2)--(0,-1)--(0,0);
  \draw[step=1,dashed,gray,very thin] (-1,-2) grid (0,0);
\end{tikzpicture}\qquad\qquad
\begin{tikzpicture}[scale = 0.4,thick, baseline={(0,-1ex/2)}] 
  \draw[very thick]
(0,0)--(0,-1)--(-1,-1)--(-1,-2)--(-1,-3)--(-1,-2)--(0,-2)--(0,-1)--(0,0);
  \draw[step=1,dashed,gray,very thin] (-2,-3) grid (0,0);
\end{tikzpicture}
\]
in order to illustrate this point. 
\end{rmk}

\section{Representations of $\TL_n$}\noindent
In this section, $\Bbbk$ is a commutative unital ring and $\delta \in
\Bbbk$, unless stated otherwise.
We fix $n$ and $\delta$ and sometimes write $\TL_n = \TL_n(\delta)$.

We begin with a number of bijections that underlie the combinatorics
of Temperley--Lieb algebras. In Section~\ref{s:dia} we considered
lattice walks to $(n-p,p)$, where $0\le 2p \le n$. Notice that the pair
$(n-p,p)$ in such a walk may be identified with a partition of at most
two parts, which in turn may be identified with its Young diagram.

A $1$-\emph{factor} is a sequence $f = (f_1, \dots, f_n)$ such that
each $f_i = \pm 1$ and the partial sums $f_1 + \cdots + f_i$ are
nonnegative, for all $i$. For each $i$ with $f_i=1$ in a $1$-factor
$f$, let $j$ be the smallest index (if any) for which $i < j \le n$
and $f_i + \cdots + f_j = 0$. Whenever this happens, the indices
$(i,j)$ are said to be \emph{paired}; otherwise the index $i$ is
\emph{unpaired}.

The Bratteli diagram associated to Temperley--Lieb combinatorics is
the infinite graph constructed inductively as follows:
\begin{itemize}
\item Start with the empty partition $\emptyset$ in level zero.
\item For each partition $\lambda=(\lambda_1,\lambda_2)$ in some
  level, draw a vertical edge to the partition
  $(\lambda_1+1,\lambda_2)$ and, if $\lambda_1>\lambda_2$, a diagonal
  edge to the partition $(\lambda_1,\lambda_2+1)$.
\end{itemize}
We illustrate the Bratteli diagram in Figure \ref{Bratteli}.
\begin{figure}[ht]
\begin{center}
\begin{tikzpicture}[xscale=6*\UNIT, yscale=-3*\UNIT]
  \coordinate (0) at (0,0);
  \foreach \x in {0,...,0}{\coordinate (1\x) at (2*\x,2);}
  \foreach \x in {0,...,1}{\coordinate (2\x) at (2*\x,4);}
  \foreach \x in {0,...,1}{\coordinate (3\x) at (2*\x,6);}
  \foreach \x in {0,...,2}{\coordinate (4\x) at (2*\x,8);}
  \foreach \x in {0,...,2}{\coordinate (5\x) at (2*\x,10);}
  \foreach \x in {0,...,3}{\coordinate (6\x) at (2*\x,12);}
  \foreach \x in {0,...,3}{\coordinate (7\x) at (2*\x,14);}

  \draw (0)--(10);

  \draw (10)--(20) (10)--(21);

  \draw (20)--(30) (20)--(31) (21)--(31);

  \draw (30)--(40) (30)--(41) (31)--(41) (31)--(42);

  \draw (40)--(50) (40)--(51) (41)--(51) (41)--(52) (42)--(52);

  \draw (50)--(60) (50)--(61) (51)--(61) (51)--(62) (52)--(62) (52)--(63);

  \draw (60)--(70) (60)--(71) (61)--(71) (61)--(72) (62)--(72)
  (62)--(73) (63)--(73);


  
\def\NULL{\footnotesize$\emptyset$}
\begin{scope}[every node/.style={fill=white}]
  \node at (0) {\NULL};

  \node at (10) {\PART{1}};

  \node at (20) {\PART{2}};
  \node at (21) {\PART{1,1}};

  \node at (30) {\PART{3}};
  \node at (31) {\PART{2,1}};

  \node at (40) {\PART{4}};
  \node at (41) {\PART{3,1}};
  \node at (42) {\PART{2,2}};

  \node at (50) {\PART{5}};
  \node at (51) {\PART{4,1}};
  \node at (52) {\PART{3,2}};

  \node at (60) {\PART{6}};
  \node at (61) {\PART{5,1}};
  \node at (62) {\PART{4,2}};
  \node at (63) {\PART{3,3}};

  \node at (70) {\PART{7}};
  \node at (71) {\PART{6,1}};
  \node at (72) {\PART{5,2}};
  \node at (73) {\PART{4,3}};

%
  
\end{scope}
\end{tikzpicture}
\end{center}
\caption{Bratteli diagram up to level $7$}\label{Bratteli}\label{f:Bratteli}
\end{figure}
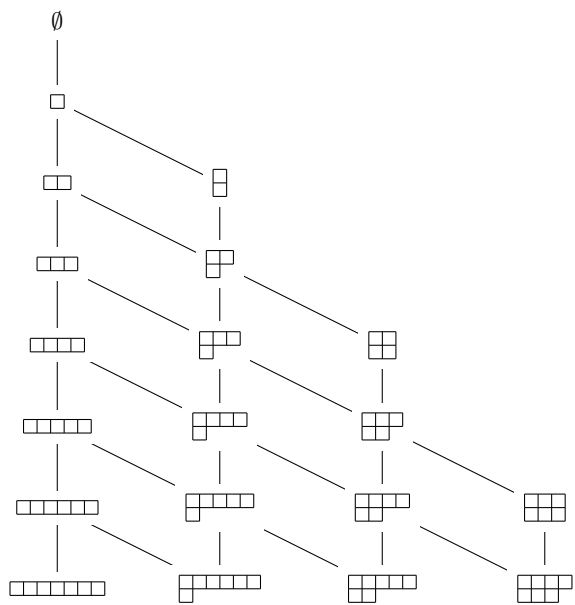

Here are the promised bijections.

\begin{lem}\label{l:bijections}
  For any $n$, $p$ such that $0 \le 2p \le n$, the following sets are
  all in bijective correspondence with one another:
  \begin{enumerate}\renewcommand{\labelenumi}{(\roman{enumi})}
  \item The set of half-diagrams on $n$ vertices with $p$ links.
  \item The set of lattice walks from $(0,0)$ to $(n-p,p)$.
  \item The set of paths in the Bratteli diagram
    from $\emptyset$ to $(n-p,p)$.
  \item The set of standard tableaux of shape $(n-p,p)$.
  \item The set of $1$-factors of length $n$ with $p$ pairings.
  \end{enumerate}
\end{lem}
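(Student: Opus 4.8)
The plan is to treat set (ii), the lattice walks to $(n-p,p)$, as a hub and to exhibit explicit bijections from it to each of the other four sets; since bijectivity is transitive, this establishes the full chain of equivalences. The correspondence (i) $\leftrightarrow$ (ii) between half-diagrams and lattice walks has already been constructed in the proof of Lemma~\ref{l:half-count}, so I would only need to supply (ii) $\leftrightarrow$ (iii), (iii) $\leftrightarrow$ (iv), and (ii) $\leftrightarrow$ (v).

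For (ii) $\leftrightarrow$ (iii), I would observe that a lattice walk is a sequence of $n$ unit steps, each rightward or upward, and I record the point $(a_k,b_k)$ reached after the $k$th step, identifying it with the two-part partition $(a_k,b_k)$; this is a genuine partition because the non-crossing condition forces $b_k \le a_k$. A rightward step $(a,b)\mapsto(a+1,b)$ is exactly the vertical edge of the Bratteli diagram, while an upward step $(a,b)\mapsto(a,b+1)$ is the diagonal edge, which is present precisely when $a>b$ --- matching the requirement $b+1 \le a$ that keeps the walk below the diagonal. Thus the sequence of partitions visited is a Bratteli path from $\emptyset$ to $(n-p,p)$, and the assignment is clearly reversible. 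For (iii) $\leftrightarrow$ (iv), I would note that the Bratteli diagram is Young's lattice restricted to partitions of at most two parts, so a path is a saturated chain $\emptyset = \lambda^{(0)} \subset \lambda^{(1)} \subset \cdots \subset \lambda^{(n)} = (n-p,p)$ in which each step adds a single box. Sending this chain to the tableau that places the integer $k$ in the box $\lambda^{(k)} \setminus \lambda^{(k-1)}$ is the standard bijection between saturated chains in Young's lattice and standard Young tableaux, the row- and column-increasing conditions corresponding exactly to adding boxes in addable positions; restricting to two-row shapes yields (iv).

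For (ii) $\leftrightarrow$ (v), I would encode each rightward step as $+1$ and each upward step as $-1$, reading in order to produce $f=(f_1,\dots,f_n)$. Since the partial sum $f_1 + \cdots + f_k$ equals the height difference $a_k - b_k$, the non-crossing condition $b_k \le a_k$ is precisely nonnegativity of all partial sums, so $f$ is a $1$-factor, with $n-p$ entries $+1$ and $p$ entries $-1$. Reading $+1$ as an open and $-1$ as a close bracket, nonnegativity of partial sums guarantees that every $-1$ is matched with a unique earlier unmatched $+1$; hence the number of pairings equals the number of $-1$ entries, namely $p$, while the remaining $n-2p$ unmatched $+1$ entries account for the defects. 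This map is visibly invertible.

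The only place demanding genuine care --- what I would flag as the main point to verify --- is that the four superficially different constraints all encode the same inequality $b \le a$ at every stage: the walk staying below the diagonal, the existence of a diagonal edge in the Bratteli diagram, the column-strictness of a standard tableau, and the nonnegativity of the partial sums of a $1$-factor. Once these are aligned, each map is either a relabeling or the classical chain-to-tableau correspondence, so checking bijectivity is routine. It is worth remarking that (i) $\leftrightarrow$ (v) can also be seen directly, which some readers may prefer: the links of a half-diagram are exactly the paired indices $(i,j)$ of the associated $1$-factor, with $f_i = +1$ when vertex $i$ opens a link or is a defect and $f_i = -1$ when vertex $i$ closes a link.
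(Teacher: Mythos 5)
Your proposal is correct and takes essentially the same approach as the paper: both treat the lattice walks (ii) as the hub, invoke Lemma~\ref{l:half-count} for (i)$\leftrightarrow$(ii), and identify walk steps with Bratteli edges, tableau entries, and $\pm 1$ entries in the natural way. The only (immaterial) difference is routing --- the paper gets (iv) directly from (ii) by writing the step numbers of a walk into row one (horizontal steps) and row two (vertical steps), and gets (v) directly from (i) by matching links with paired indices, whereas you factor (iv) through (iii) via the chain-to-tableau correspondence and reach (v) from (ii); the composite bijections agree with the paper's.
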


\begin{proof}
The bijection between the sets in (i), (ii) is Lemma
\ref{l:half-count}.  The bijection between the sets in (ii), (iii) is
obtained by matching (horizontal, vertical) segments in a lattice walk
with (vertical, diagonal) edges in a Bratteli path. The bijection
between the sets in (ii), (iv) comes from numbering each unit-length
segment in a lattice walk, in order. Write the numbers into the boxes
of a Young diagram of shape $(n-p,p)$ so that horizontal segments are
recorded in row one, and vertical segments in row two.  For instance,
the tableau
\[
\ytableaushort{12358,467} 
\]
corresponds to the lattice walk appearing in the proof of
Lemma~\ref{l:half-count}.  Note that the numbers are entered in order
in each row from left to right; this always produces a standard
tableau. Finally, a bijection between the sets in (i), (v) is easily
obtained by matching links with paired vertices and defects with
unpaired ones. 
\end{proof}

In this section, we prefer to use the set of half-diagrams from part
(i) of Lemma~\ref{l:bijections}, but that indexing set may be replaced
by any of the others. We note that half-diagrams are called ``planar
involutions'' in \cite{GL:96}. We need the following notation.  Set
\[
\Lambda = \Lambda(n) = \{n, n-2, \dots, n-2l\}
\]
where $l$ is the integer part of $n/2$. Notice that the map $n-2p
\mapsto (n-p,p)$ for $0\le 2p \le n$ defines a bijection between
$\Lambda$ and the set of two-part partitions of $n$. For each $\lambda
\in \Lambda$, let
\[
M(\lambda) = \text{ the set of half-diagrams on $n$ vertices with
  $\lambda$ defects}.
\]
Given any $(s,t) \in M(\lambda)\times M(\lambda)$, let $t^*$ be the
reflection of $t$ across the line containing its vertices. Place $s$
directly above $t^*$. There is one and only one way to
connect the defects in $s$ to the defects in $t^*$ so as to make an
$n$-diagram. Let
\[
C^\lambda_{s,t} = \text{ the $n$-diagram obtained by this process.}
\]
Then the disjoint union $\bigsqcup_{\lambda\in \Lambda}
\{C^\lambda_{s,t} \mid s,t \in M(\lambda)\}$ is the basis consisting
of all $n$-diagrams.  Finally, let
\[
*: \TL_n \to \TL_n
\]
be the linear extension of the map that reflects a given diagram
across its axis of symmetry with respect to the parallel lines
determined by its vertices. Write $d^*$ for the image of a diagram $d$
under this map. Then
\[
d^{**}=d \quad\text{and}\quad (d_1d_2)^* = d_2^*d_1^*
\]
for all $n$-diagrams $d_1$, $d_2$. In other words, the map $*$ is an
algebra anti-involution of $\TL_n$. We have $(C^\lambda_{s,t})^* =
C^\lambda_{t,s}$ for all $s,t \in M(\lambda)$, $\lambda \in \Lambda$.
By Lemma~\ref{l:bijections} and equation~\eqref{e:Cnp}, the
cardinality of $M(\lambda)$ is given by
\begin{equation}\label{e:cardMlam}
  |M(\lambda)| = \Cat_{n,p} = \binom{n}{p} - \binom{n}{p-1} \qquad
  \text{if } n-2p = \lambda
\end{equation}
for each $\lambda$ in $\Lambda$. It is easy to check the following
(see \cite{GL:96}*{Example 1.4}).

\begin{prop}
  Let $\Bbbk$ be a unital commutative ring, $\delta \in \Bbbk$. Then
  the datum $(\Lambda,M,C,*)$ defined above is a cell datum for the
  algebra $\TL_n = \TL_n(\delta)$, in the sense of \cite{GL:96}. In
  other words, $\TL_n$ is cellular, and the basis of $n$-diagrams is a
  cellular basis.
\end{prop}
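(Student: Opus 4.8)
The plan is to check the cellularity axioms (C1)--(C3), with essentially all the work concentrated in (C3). First I would give $\Lambda$ a partial order: since the label $\lambda$ records the number of defects (equivalently, through-strands) of the diagrams in its cell, I take the order induced from the usual linear order on $\Z$, so that $\mu < \lambda$ means $\mu$ has fewer through-strands. With this order $\Lambda$ is a finite (in fact totally ordered) poset. Axiom (C1) is then immediate from the discussion preceding the statement: the assignment $(s,t) \mapsto C^\lambda_{s,t}$ was already shown to give a bijection from $\bigsqcup_{\lambda} M(\lambda) \times M(\lambda)$ onto the set of all $n$-diagrams, which is a $\Bbbk$-basis of $\D_n(\delta) = \TL_n$; hence $C$ is injective with image a basis. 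Axiom (C2) was likewise established already: $*$ is a $\Bbbk$-linear anti-involution with $(C^\lambda_{s,t})^* = C^\lambda_{t,s}$.

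For (C3), by $\Bbbk$-linearity it suffices to take $a = d$ a single $n$-diagram and to analyze the product $d\, C^\lambda_{s,t}$ using the diagram multiplication of $\S\ref{s:dia}$. I would use two facts. First, diagram multiplication never increases the number of through-strands, so $d\,C^\lambda_{s,t}$ has at most $\lambda$ through-strands. Second, the arcs of $C^\lambda_{s,t}$ that join two bottom vertices are precisely the links of $t$; being internal to the diagram, they are not disturbed when $d$ is stacked on top, and they reappear unchanged as the bottom arcs of the product. If the product has fewer than $\lambda$ through-strands, it is a scalar multiple of a diagram with fewer defects, hence lies in $\TL_n(<\lambda)$ and vanishes modulo that ideal. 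If the product has exactly $\lambda$ through-strands, then every defect vertex of $t$ is still the endpoint of a through-strand, so the bottom half-diagram of the product is exactly $t$; thus $d\,C^\lambda_{s,t} = \delta^{\,r}\,C^\lambda_{s',t}$, where the new top half-diagram $s'$ and the loop-exponent $r$ are determined solely by how $d$ recombines the top portion governed by $s$. Setting $r_d(s',s) = \delta^{\,r}$ for this unique $s'$ (and $0$ for all other members of $M(\lambda)$) yields precisely the expansion required by (C3); since neither $s'$ nor $r$ involves $t$, the coefficient is independent of $t$, as the axiom demands.

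The main obstacle is the second case of (C3): the clean justification that, when the through-strand count is preserved, the bottom half $t$ survives intact and the product factors as $\delta^{\,r} C^\lambda_{s',t}$ with $s'$ and $r$ independent of $t$. This is a purely diagrammatic argument --- tracking, across the horizontal slice separating $s$ from $t^*$, which vertices remain endpoints of through-strands after left multiplication by $d$ --- but it must be made carefully, because it is exactly the point where the $t$-independence of the structure constants is used, and that independence is what upgrades the nice diagram basis to a genuine cellular basis.
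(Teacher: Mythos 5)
Your proof is correct, and in fact it supplies an argument that the paper itself omits: the paper's ``proof'' of this proposition is the single sentence ``It is easy to check the following'' together with the citation \cite{GL:96}*{Example 1.4}, so there is no written argument to compare against. Your verification is the standard one, and the point you flag as the main obstacle --- the $t$-independence in the case where the through-strand count is preserved --- is indeed the crux, and your reasoning for it is sound: a loop in the stacked picture is a closed cycle, and both the through strands of $C^\lambda_{s,t}$ and the links of $t^*$ terminate on the free bottom row, so no loop (and no arc of the product's top half) can involve them; hence every loop alternates between bottom links of $d$ and links of $s$, and the product's top half is determined by $d$ and $s$ alone. A clean way to finish, which also ties your argument back to the paper, is to observe that your structure constants are exactly the coefficients of the half-diagram action: writing $d\cdot s = \delta^{N} s'$ as in \eqref{e:half-m-rule}, one has $d\,C^\lambda_{s,t} = \delta^{N} C^\lambda_{s',t}$ whenever $s' \in M(\lambda)$, and $d\,C^\lambda_{s,t} \in \TL_n(<\lambda)$ otherwise; thus $r_d(s',s)$ coincides with the matrix of $d$ acting on $H(\lambda)$ via \eqref{e:quo-act}, which makes the $t$-independence manifest and is precisely why Proposition~\ref{p:Hm} (identifying $H(\lambda)$ with the abstract cell modules) holds. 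Your choice of partial order on $\Lambda$ (fewer defects $=$ smaller) is also the right one, since left multiplication by a diagram can only decrease the number of through strands, so the error terms fall into $\TL_n(<\lambda)$ exactly as axiom (C3) requires.
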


We now construct some $\TL_n$-modules diagrammatically.  If
$h$ is a half-diagram on $n$ vertices and $d$ an $n$-diagram, we stack
$d$ above $h$ and apply the diagrammatic multiplication rule
\eqref{e:mult-rule} to obtain
\begin{equation}\label{e:half-m-rule}
  d h = \delta^N \, h'
\end{equation}
for a unique half-diagram $h'$ (obtained by discarding the loops and
identified vertices and retaining links) and some integer $N \ge 0$
(the number of discarded loops). 
For example,
\[
\begin{tikzpicture}[scale = 0.35,thick, baseline={(0,-1ex/2)}] 
\tikzstyle{vertex} = [shape = circle, minimum size = 4pt,
    inner sep = 1pt, fill=black] 
\node[vertex] (G--6) at (7.5, -1) [shape = circle, draw] {}; 
\node[vertex] (G--5) at (6.0, -1) [shape = circle, draw] {}; 
\node[vertex] (G--4) at (4.5, -1) [shape = circle, draw] {}; 
\node[vertex] (G-6) at (7.5, 1) [shape = circle, draw] {}; 
\node[vertex] (G--3) at (3.0, -1) [shape = circle, draw] {}; 
\node[vertex] (G-1) at (0.0, 1) [shape = circle, draw] {}; 
\node[vertex] (G--2) at (1.5, -1) [shape = circle, draw] {}; 
\node[vertex] (G--1) at (0.0, -1) [shape = circle, draw] {}; 
\node[vertex] (G-2) at (1.5, 1) [shape = circle, draw] {}; 
\node[vertex] (G-5) at (6.0, 1) [shape = circle, draw] {}; 
\node[vertex] (G-3) at (3.0, 1) [shape = circle, draw] {}; 
\node[vertex] (G-4) at (4.5, 1) [shape = circle, draw] {}; 
\draw[] (G--6) .. controls +(-0.5, 0.5) and +(0.5, 0.5) .. (G--5); 
\draw[] (G-6) .. controls +(-1, -1) and +(1, 1) .. (G--4); 
\draw[] (G-1) .. controls +(1, -1) and +(-1, 1) .. (G--3); 
\draw[] (G--2) .. controls +(-0.5, 0.5) and +(0.5, 0.5) .. (G--1); 
\draw[] (G-2) .. controls +(0.9, -0.9) and +(-0.9, -0.9) .. (G-5); 
\draw[] (G-3) .. controls +(0.5, -0.5) and +(-0.5, -0.5) .. (G-4); 
\end{tikzpicture} \quad \times \quad
\begin{minipage}{2.8cm}
\begin{tikzpicture}[scale = 0.35,thick, baseline={(0,-1ex/2)}] 
\tikzstyle{vertex} = [shape = circle, minimum size = 4pt,
    inner sep = 1pt, fill=black] 
\node[vertex] (G-6) at (7.5, 1) [shape = circle, draw] {}; 
\node[vertex] (G-1) at (0.0, 1) [shape = circle, draw] {}; 
\node[vertex] (G-2) at (1.5, 1) [shape = circle, draw] {}; 
\node[vertex] (G-5) at (6.0, 1) [shape = circle, draw] {}; 
\node[vertex] (G-3) at (3.0, 1) [shape = circle, draw] {}; 
\node[vertex] (G-4) at (4.5, 1) [shape = circle, draw] {};
\draw[] (G-6) -- (7.5,0);
\draw[] (G-1) -- (0,0);
\draw[] (G-2) .. controls +(0.9, -0.9) and +(-0.9, -0.9) .. (G-5); 
\draw[] (G-3) .. controls +(0.5, -0.5) and +(-0.5, -0.5) .. (G-4); 
\end{tikzpicture}
\end{minipage} \quad = \quad
\begin{minipage}{2.8cm}
\begin{tikzpicture}[scale = 0.35,thick, baseline={(0,-1ex/2)}] 
\tikzstyle{vertex} = [shape = circle, minimum size = 4pt,
    inner sep = 1pt, fill=black] 
\node[vertex] (G-6) at (7.5, 1) [shape = circle, draw] {}; 
\node[vertex] (G-1) at (0.0, 1) [shape = circle, draw] {}; 
\node[vertex] (G-2) at (1.5, 1) [shape = circle, draw] {}; 
\node[vertex] (G-5) at (6.0, 1) [shape = circle, draw] {}; 
\node[vertex] (G-3) at (3.0, 1) [shape = circle, draw] {}; 
\node[vertex] (G-4) at (4.5, 1) [shape = circle, draw] {};
\draw[] (G-1) .. controls +(1.3, -1.3) and +(-1.3, -1.3) .. (G-6);
\draw[] (G-2) .. controls +(0.9, -0.9) and +(-0.9, -0.9) .. (G-5); 
\draw[] (G-3) .. controls +(0.5, -0.5) and +(-0.5, -0.5) .. (G-4); 
\end{tikzpicture}
\end{minipage} 
\]
as one can see by considering the configuration 
\[
\begin{gathered}
\begin{tikzpicture}[scale = 0.35,thick, baseline={(0,-1ex/2)}] 
\tikzstyle{vertex} = [shape = circle, minimum size = 4pt,
    inner sep = 1pt, fill=black] 
\node[vertex] (G--6) at (7.5, -1) [shape = circle, draw] {}; 
\node[vertex] (G--5) at (6.0, -1) [shape = circle, draw] {}; 
\node[vertex] (G--4) at (4.5, -1) [shape = circle, draw] {}; 
\node[vertex] (G-6) at (7.5, 1) [shape = circle, draw] {}; 
\node[vertex] (G--3) at (3.0, -1) [shape = circle, draw] {}; 
\node[vertex] (G-1) at (0.0, 1) [shape = circle, draw] {}; 
\node[vertex] (G--2) at (1.5, -1) [shape = circle, draw] {}; 
\node[vertex] (G--1) at (0.0, -1) [shape = circle, draw] {}; 
\node[vertex] (G-2) at (1.5, 1) [shape = circle, draw] {}; 
\node[vertex] (G-5) at (6.0, 1) [shape = circle, draw] {}; 
\node[vertex] (G-3) at (3.0, 1) [shape = circle, draw] {}; 
\node[vertex] (G-4) at (4.5, 1) [shape = circle, draw] {}; 
\draw[] (G--6) .. controls +(-0.5, 0.5) and +(0.5, 0.5) .. (G--5); 
\draw[] (G-6) .. controls +(-1, -1) and +(1, 1) .. (G--4); 
\draw[] (G-1) .. controls +(1, -1) and +(-1, 1) .. (G--3); 
\draw[] (G--2) .. controls +(-0.5, 0.5) and +(0.5, 0.5) .. (G--1); 
\draw[] (G-2) .. controls +(0.9, -0.9) and +(-0.9, -0.9) .. (G-5); 
\draw[] (G-3) .. controls +(0.5, -0.5) and +(-0.5, -0.5) .. (G-4); 
\end{tikzpicture}\\
\begin{tikzpicture}[scale = 0.35,thick, baseline={(0,-1ex/2)}] 
\tikzstyle{vertex} = [shape = circle, minimum size = 4pt,
    inner sep = 1pt, fill=black] 
\node[vertex] (G-6) at (7.5, 1) [shape = circle, draw] {}; 
\node[vertex] (G-1) at (0.0, 1) [shape = circle, draw] {}; 
\node[vertex] (G-2) at (1.5, 1) [shape = circle, draw] {}; 
\node[vertex] (G-5) at (6.0, 1) [shape = circle, draw] {}; 
\node[vertex] (G-3) at (3.0, 1) [shape = circle, draw] {}; 
\node[vertex] (G-4) at (4.5, 1) [shape = circle, draw] {};
\draw[] (G-6) -- (7.5,0);
\draw[] (G-1) -- (0,0);
\draw[] (G-2) .. controls +(0.9, -0.9) and +(-0.9, -0.9) .. (G-5); 
\draw[] (G-3) .. controls +(0.5, -0.5) and +(-0.5, -0.5) .. (G-4); 
\end{tikzpicture}
\end{gathered}
\]
obtained by the usual stacking procedure. This example shows,
incidentally, that the action does not always preserve the number of
defects, although the number of defects in $h'$ cannot exceed the
number in $h$.

Let $\hat{H}$ be the $\Bbbk$-linear span of the set
$\bigsqcup_{\lambda \in \Lambda} M(\lambda)$. That is, $\hat{H}$ is
the span of the set of half-diagrams on $n$ vertices. The linear
extension of the action defined in \eqref{e:half-m-rule} makes
$\hat{H}$ into a $\TL_n(\delta)$-module. For each $\lambda$ in
$\Lambda$, let
\[
\hat{H}^{\le \lambda} = \text{ $\Bbbk$-span of } \textstyle
  \bigsqcup_{\mu \le \lambda} M(\mu), \quad \hat{H}^{< \lambda} =
\text{ $\Bbbk$-span of } \textstyle \bigsqcup_{\mu < \lambda}
  M(\mu)
\]
where $\mu$ ranges over $\Lambda$ in both unions.  Since the
$\TL_n$-action cannot increase the number of defects, these spans are
$\TL_n$-submodules of $\hat{H}$. For each $\lambda \in \Lambda$, set
\[
H(\lambda) := \hat{H}^{\le \lambda}/\hat{H}^{< \lambda}. 
\]
A basis for $H(\lambda)$ is $\{h+\hat{H}^{< \lambda} \mid h \in
M(\lambda)\}$.  If we abuse notation by denoting a coset $h+\hat{H}^{<
  \lambda}$ by its chosen representative $h$, for any $h \in
M(\lambda)$, then the action of an $n$-diagram $d$ on $h$ is given by:
\begin{equation}\label{e:quo-act}
d h = 
\begin{cases}
  \delta^N \, h' & \text{ if } h' \in M(\lambda)\\ 0 & \text{ otherwise}
\end{cases}
\end{equation}
with $N$, $h'$ as in \eqref{e:half-m-rule}.
With this convention, the set $M(\lambda)$ is a basis of $H(\lambda)$
and the rule \eqref{e:quo-act} defines its $\TL_n(\delta)$-module
structure.

Graham and Lehrer prove that any cellular algebra has an
associated family of \emph{cell modules}, which may be constructed
abstractly from its chosen cellular basis.

\begin{prop}\label{p:Hm}
  Let $\delta$ be an element of a unital commutative ring $\Bbbk$.
  For any $\lambda \in \Lambda$, the module $H(\lambda)$ is isomorphic
  to the abstract cell module indexed by $\lambda$ in the theory of
  cellular algebras. 
\end{prop}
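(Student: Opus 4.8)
The plan is to exhibit the obvious candidate isomorphism and then verify it is $\TL_n$-equivariant by matching structure constants. Recall that the abstract cell module $W(\lambda)$ attached to $\lambda$ in the Graham--Lehrer theory is the free $\Bbbk$-module on a basis $\{C_s \mid s \in M(\lambda)\}$ equipped with the action $a\, C_s = \sum_{s'\in M(\lambda)} r_a(s',s)\, C_{s'}$, where the scalars $r_a(s',s)\in\Bbbk$ are precisely those appearing in axiom (C3). Since $H(\lambda)$ is free on the basis $M(\lambda)$ as well, I would define the $\Bbbk$-linear map $\phi\colon H(\lambda)\to W(\lambda)$ by $h\mapsto C_h$, which is manifestly a bijection, and reduce the proposition to checking that $\phi$ intertwines the two actions. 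As both actions are $\Bbbk$-linear, it suffices to treat $a = d$ an arbitrary $n$-diagram acting on a basis element $h = s \in M(\lambda)$.

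The key step is to read off the constants $r_d(s',s)$ from the diagram calculus. Fixing $s,t\in M(\lambda)$, the element $C^\lambda_{s,t}$ is the $n$-diagram whose top half-diagram is $s$, whose bottom half-diagram is $t^*$, and which has exactly $\lambda$ through-strands joining the defects of $s$ to those of $t^*$. I would compute the product $d\, C^\lambda_{s,t}$ by the stacking rule and observe that the bottom half $t^*$ is untouched, so that the top portion of $d\, C^\lambda_{s,t}$ is exactly the half-diagram product $d\, s = \delta^N h'$ of \eqref{e:half-m-rule}, and the loops counted by $N$ are exactly those created in the $d$--$s$ interaction. Two cases then arise. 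If $h' =: s'$ still carries $\lambda$ defects, planarity forces its defects to connect in order through the middle to those of $t^*$, giving $d\, C^\lambda_{s,t} = \delta^N\, C^\lambda_{s',t}$, a diagram with $\lambda$ through-strands; if $h'$ has fewer than $\lambda$ defects, then capping off defects of $s$ merely joins pairs of $t^*$-defects, so $d\, C^\lambda_{s,t}$ has fewer than $\lambda$ through-strands and thus lies in the span of the $C^\mu_{u,v}$ with $\mu<\lambda$, i.e.\ in $A(<\lambda)$.

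Comparing these two outcomes with (C3), I would conclude that $r_d(s',s) = \delta^N$ in the first case and $r_d(s',s)=0$ in the second, and in particular that these scalars are independent of $t$, exactly as cellularity demands. But this is word-for-word the action \eqref{e:quo-act} of $d$ on the basis element $s$ of $H(\lambda)$. Hence $\phi(d\cdot h) = d\cdot\phi(h)$ for every $n$-diagram $d$ and every $h\in M(\lambda)$, so $\phi$ is an isomorphism of $\TL_n$-modules.

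I expect the only delicate point---and thus the main obstacle---to be the assertion that in the stacking product $d\, C^\lambda_{s,t}$ the bottom half $t^*$ contributes nothing beyond recording the second cellular index, so that the entire computation collapses to the half-diagram action \eqref{e:half-m-rule} of $d$ on $s$ with the same loop count $N$. Once that reduction is justified, the through-strand bookkeeping in the two cases is routine and matches \eqref{e:quo-act} term by term.
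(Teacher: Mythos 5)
Your argument is correct, and it is precisely the direct construction that the paper alludes to but does not write out: the paper's own ``proof'' consists only of the remark that the result is implicit in Graham--Lehrer \cite{GL:96} and that the reader may prefer to construct the needed isomorphism directly. Your verification supplies those details, and you correctly isolate the two facts that make it work: the loops formed in $d\,C^\lambda_{s,t}$ coincide with the loops formed in the half-diagram product $d\,s$ (through strands terminate on the bottom row, so they cannot close up into circuits), and when $d\,s$ retains all $\lambda$ defects, planarity forces the product to equal $\delta^N C^\lambda_{s',t}$, while otherwise the product has fewer through strands and hence lies in $A(<\lambda)$. This yields coefficients $r_d(s',s)$ that are independent of $t$, exactly as axiom (C3) demands, and they agree term by term with the action \eqref{e:quo-act} on $H(\lambda)$, so your map $\phi$ is indeed a $\TL_n$-module isomorphism.
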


A proof of this fact is implicit in \cite{GL:96}. Alternatively, the
reader may prefer to construct the needed isomorphism directly, which
is not difficult.  Note that the rank of $H(\lambda)$ over $\Bbbk$ is
computed in equation \eqref{e:cardMlam}.

We take a moment to consider some general facts on cellular algebras.
If $A$ is cellular with cell datum $(\Lambda,M,C,*)$, let $W(\lambda)$
be the cell module indexed by $\lambda$ in $\Lambda$. There is an
associated bilinear form $\varphi_\lambda$ on $W(\lambda)$. Let
$\Lambda_0 := \{\lambda \in \Lambda \mid \varphi_\lambda \ne 0\}$.  By
\cite{GL:96}, if the ground ring $\Bbbk$ is a field then the collection
\[
  \{ L(\lambda):= W(\lambda)/\text{rad}(\varphi) \mid \lambda \in
  \Lambda_0 \}
\]
gives a complete set, up to isomorphism, of simple
$A$-modules. Furthermore, each simple module is absolutely simple.
Finally, $A$ is (split) semisimple over a field if and only if all of
its cell modules are simple.

Return now to the consideration of $A=\TL_n$. Combining the final
sentence of the preceding paragraph with Corollary \ref{c:ss-crit}
gives the following.

\begin{prop}
  Let $\Bbbk$ be a field, and suppose that $0 \ne v \in \Bbbk$ such
  that $[n]_v^! \ne 0$. If $\delta = \pm(v+v^{-1}) \ne 0$ then
  $\{H(\lambda)\mid \lambda \in \Lambda\}$ is a complete set of simple
  $\TL_n(\delta)$-modules, up to isomorphism.
\end{prop}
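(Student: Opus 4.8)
The plan is to deduce the statement from three ingredients already in hand: the semisimplicity criterion of Corollary~\ref{c:ss-crit}, the general Graham--Lehrer theory of cell modules recalled above, and the identification $H(\lambda) \cong W(\lambda)$ of Proposition~\ref{p:Hm}. The whole argument is thus a matter of assembling these facts correctly, with no new computation required.

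First I would check that the hypotheses match those of Corollary~\ref{c:ss-crit}. We are given a field $\Bbbk$ and an element $0 \ne v \in \Bbbk$ with $[n]_v^! \ne 0$, and the requirement $\delta = \pm(v+v^{-1}) \ne 0$ is exactly the condition $v+v^{-1} \ne 0$. Hence Corollary~\ref{c:ss-crit} applies and $\TL_n(\delta)$ is split semisimple over $\Bbbk$.

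Next I would feed this into the cellular machinery. Since $\TL_n(\delta)$ is cellular with cell modules $W(\lambda)$ ($\lambda \in \Lambda$), and since a cellular algebra over a field is split semisimple precisely when all of its cell modules are simple, semisimplicity forces every $W(\lambda)$ to be simple. The Graham--Lehrer classification then identifies the simple modules with the $L(\lambda) = W(\lambda)/\operatorname{rad}(\varphi_\lambda)$ for $\lambda \in \Lambda_0$. Combining this with Proposition~\ref{p:Hm}, which gives $H(\lambda) \cong W(\lambda)$, it remains only to see that $\Lambda_0 = \Lambda$, so that every $\lambda \in \Lambda$ contributes a distinct simple module.

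The one point that deserves care is precisely this last equality. I would establish it from the dimension identity $\dim_\Bbbk \TL_n(\delta) = \sum_{\lambda \in \Lambda}(\dim_\Bbbk W(\lambda))^2$, which is forced by the cellular basis being indexed by pairs in $M(\lambda) \times M(\lambda)$, together with the Wedderburn count $\dim_\Bbbk \TL_n(\delta) = \sum_{\lambda \in \Lambda_0}(\dim_\Bbbk L(\lambda))^2$ available from split semisimplicity. Since $\Lambda_0 \subseteq \Lambda$ and $\dim_\Bbbk L(\lambda) \le \dim_\Bbbk W(\lambda)$, comparing the two expressions forces $\Lambda_0 = \Lambda$ together with $L(\lambda) = W(\lambda)$ for all $\lambda$. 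Therefore $\{H(\lambda) \mid \lambda \in \Lambda\} = \{W(\lambda) \mid \lambda \in \Lambda\}$ is a complete set of pairwise non-isomorphic simple $\TL_n(\delta)$-modules, as required. The hard work having already been done in the cited results, this is the only step beyond routine bookkeeping.
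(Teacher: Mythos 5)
Your proposal is correct and takes essentially the same route as the paper: the paper's entire proof consists of combining the Graham--Lehrer fact that a cellular algebra over a field is split semisimple if and only if all its cell modules are simple with the semisimplicity guaranteed by Corollary~\ref{c:ss-crit}, identifying the cell modules with the $H(\lambda)$ via Proposition~\ref{p:Hm}. Your extra dimension-count step showing $\Lambda_0 = \Lambda$ (so that the $H(\lambda)$ are pairwise non-isomorphic and exhaust the simples) is a sound and worthwhile filling-in of a point the paper leaves implicit in its one-sentence appeal to the cited cellular theory.
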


Let $\varphi_\lambda(-,-)$ be the bilinear form associated to
$H(\lambda)$.  We claim that $\varphi_\lambda$ may be computed
diagrammatically.  If $h$ is a half-diagram, let $h^*$ be the result
of reflecting $h$ across the line containing its vertices.  Given $h,
h' \in M(\lambda)$, where $\lambda \in \Lambda$, let $h^*|h'$ be the
configuration obtained by stacking $h^*$ above $h'$. We say that
$h^*|h'$ is \emph{defect preserving} if every defect in one of the
half-diagrams is connected by a path to a defect in the other
half-diagram, after corresponding vertices are identified. Then
$\varphi_\lambda(h,h')$ is given by
\begin{equation}
  \varphi_\lambda(h,h') = 
  \begin{cases}
    \delta^N & \text{if $h^*|h'$ is defect preserving}\\
    0 & \text{otherwise}
  \end{cases}
\end{equation}
where $N$ is the number of loops in $h^*|h'$ (after corresponding
vertices are identified). The form $\varphi_\lambda$ is associative:
$\varphi_\lambda(th,h') = \varphi_\lambda(h,t^*h')$, for any $t$ in
$\TL_n(\delta)$.

\begin{example}
If $n=6$ then we have the following, which illustrate the various
cases that can occur.
\begin{enumerate}\renewcommand{\labelenumi}{(\roman{enumi})}
\item \quad $\varphi_0\big(
\begin{tikzpicture}[scale = 0.35,thick, baseline={(0,1ex)}] 
\tikzstyle{vertex} = [shape = circle, minimum size = 4pt,
    inner sep = 1pt, fill=black] 
\node[vertex] (G-6) at (7.5, 1) [shape = circle, draw] {}; 
\node[vertex] (G-1) at (0.0, 1) [shape = circle, draw] {}; 
\node[vertex] (G-2) at (1.5, 1) [shape = circle, draw] {}; 
\node[vertex] (G-5) at (6.0, 1) [shape = circle, draw] {}; 
\node[vertex] (G-3) at (3.0, 1) [shape = circle, draw] {}; 
\node[vertex] (G-4) at (4.5, 1) [shape = circle, draw] {};
\draw[] (G-5) .. controls +(0.5, -0.5) and +(-0.5, -0.5) .. (G-6);
\draw[] (G-1) .. controls +(0.9, -0.9) and +(-0.9, -0.9) .. (G-4); 
\draw[] (G-2) .. controls +(0.5, -0.5) and +(-0.5, -0.5) .. (G-3); 
\end{tikzpicture}
\; , \;
\begin{tikzpicture}[scale = 0.35,thick, baseline={(0,1ex)}] 
\tikzstyle{vertex} = [shape = circle, minimum size = 4pt,
    inner sep = 1pt, fill=black] 
\node[vertex] (G-6) at (7.5, 1) [shape = circle, draw] {}; 
\node[vertex] (G-1) at (0.0, 1) [shape = circle, draw] {}; 
\node[vertex] (G-2) at (1.5, 1) [shape = circle, draw] {}; 
\node[vertex] (G-5) at (6.0, 1) [shape = circle, draw] {}; 
\node[vertex] (G-3) at (3.0, 1) [shape = circle, draw] {}; 
\node[vertex] (G-4) at (4.5, 1) [shape = circle, draw] {};
\draw[] (G-5) .. controls +(0.5, -0.5) and +(-0.5, -0.5) .. (G-6);
\draw[] (G-3) .. controls +(0.5, -0.5) and +(-0.5, -0.5) .. (G-4); 
\draw[] (G-1) .. controls +(0.5, -0.5) and +(-0.5, -0.5) .. (G-2); 
\end{tikzpicture}
\big) \; = \; \delta^2$.

\item \quad $\varphi_2\big(
\begin{tikzpicture}[scale = 0.35,thick, baseline={(0,1ex)}] 
\tikzstyle{vertex} = [shape = circle, minimum size = 4pt,
    inner sep = 1pt, fill=black] 
\node[vertex] (G-6) at (7.5, 1) [shape = circle, draw] {}; 
\node[vertex] (G-1) at (0.0, 1) [shape = circle, draw] {}; 
\node[vertex] (G-2) at (1.5, 1) [shape = circle, draw] {}; 
\node[vertex] (G-5) at (6.0, 1) [shape = circle, draw] {}; 
\node[vertex] (G-3) at (3.0, 1) [shape = circle, draw] {}; 
\node[vertex] (G-4) at (4.5, 1) [shape = circle, draw] {};
\draw[] (G-1) .. controls +(0.5, -0.5) and +(-0.5, -0.5) .. (G-2);
\draw[] (G-3) -- (3.0,0);
\draw[] (G-6) -- (7.5,0);
\draw[] (G-4) .. controls +(0.5, -0.5) and +(-0.5, -0.5) .. (G-5); 
\end{tikzpicture}
\; , \;
\begin{tikzpicture}[scale = 0.35,thick, baseline={(0,1ex)}] 
\tikzstyle{vertex} = [shape = circle, minimum size = 4pt,
    inner sep = 1pt, fill=black] 
\node[vertex] (G-6) at (7.5, 1) [shape = circle, draw] {}; 
\node[vertex] (G-1) at (0.0, 1) [shape = circle, draw] {}; 
\node[vertex] (G-2) at (1.5, 1) [shape = circle, draw] {}; 
\node[vertex] (G-5) at (6.0, 1) [shape = circle, draw] {}; 
\node[vertex] (G-3) at (3.0, 1) [shape = circle, draw] {}; 
\node[vertex] (G-4) at (4.5, 1) [shape = circle, draw] {};
\draw[] (G-5) -- (6.0,0);
\draw[] (G-6) -- (7.5,0);
\draw[] (G-3) .. controls +(0.5, -0.5) and +(-0.5, -0.5) .. (G-4); 
\draw[] (G-1) .. controls +(0.5, -0.5) and +(-0.5, -0.5) .. (G-2); 
\end{tikzpicture}
\big) \; = \; \delta$.

\item \quad $\varphi_2\big(
\begin{tikzpicture}[scale = 0.35,thick, baseline={(0,1ex)}] 
\tikzstyle{vertex} = [shape = circle, minimum size = 4pt,
    inner sep = 1pt, fill=black] 
\node[vertex] (G-6) at (7.5, 1) [shape = circle, draw] {}; 
\node[vertex] (G-1) at (0.0, 1) [shape = circle, draw] {}; 
\node[vertex] (G-2) at (1.5, 1) [shape = circle, draw] {}; 
\node[vertex] (G-5) at (6.0, 1) [shape = circle, draw] {}; 
\node[vertex] (G-3) at (3.0, 1) [shape = circle, draw] {}; 
\node[vertex] (G-4) at (4.5, 1) [shape = circle, draw] {};
\draw[] (G-1) .. controls +(0.5, -0.5) and +(-0.5, -0.5) .. (G-2);
\draw[] (G-3) -- (3.0,0);
\draw[] (G-4) -- (4.5,0);
\draw[] (G-5) .. controls +(0.5, -0.5) and +(-0.5, -0.5) .. (G-6); 
\end{tikzpicture}
\; , \;
\begin{tikzpicture}[scale = 0.35,thick, baseline={(0,1ex)}] 
\tikzstyle{vertex} = [shape = circle, minimum size = 4pt,
    inner sep = 1pt, fill=black] 
\node[vertex] (G-6) at (7.5, 1) [shape = circle, draw] {}; 
\node[vertex] (G-1) at (0.0, 1) [shape = circle, draw] {}; 
\node[vertex] (G-2) at (1.5, 1) [shape = circle, draw] {}; 
\node[vertex] (G-5) at (6.0, 1) [shape = circle, draw] {}; 
\node[vertex] (G-3) at (3.0, 1) [shape = circle, draw] {}; 
\node[vertex] (G-4) at (4.5, 1) [shape = circle, draw] {};
\draw[] (G-5) -- (6.0,0);
\draw[] (G-6) -- (7.5,0);
\draw[] (G-3) .. controls +(0.5, -0.5) and +(-0.5, -0.5) .. (G-4); 
\draw[] (G-1) .. controls +(0.5, -0.5) and +(-0.5, -0.5) .. (G-2); 
\end{tikzpicture}
\big) \; = \; 0$.
\end{enumerate}
One sees this by looking respectively at the three stack
configurations $h^*|h'$
\[
\begin{gathered}
\begin{tikzpicture}[scale = 0.35,thick, baseline={(0,1ex)}] 
\tikzstyle{vertex} = [shape = circle, minimum size = 4pt,
    inner sep = 1pt, fill=black] 
\node[vertex] (G-6) at (7.5, 0) [shape = circle, draw] {}; 
\node[vertex] (G-1) at (0.0, 0) [shape = circle, draw] {}; 
\node[vertex] (G-2) at (1.5, 0) [shape = circle, draw] {}; 
\node[vertex] (G-5) at (6.0, 0) [shape = circle, draw] {}; 
\node[vertex] (G-3) at (3.0, 0) [shape = circle, draw] {}; 
\node[vertex] (G-4) at (4.5, 0) [shape = circle, draw] {};
\draw[] (G-6) .. controls +(-0.5, 0.5) and +(0.5, 0.5) .. (G-5);
\draw[] (G-4) .. controls +(-0.9, 0.9) and +(0.9, 0.9) .. (G-1); 
\draw[] (G-3) .. controls +(-0.5, 0.5) and +(0.5, 0.5) .. (G-2); 
\end{tikzpicture}
  \\
\begin{tikzpicture}[scale = 0.35,thick, baseline={(0,-1ex)}] 
\tikzstyle{vertex} = [shape = circle, minimum size = 4pt,
    inner sep = 1pt, fill=black] 
\node[vertex] (G-6) at (7.5, 0) [shape = circle, draw] {}; 
\node[vertex] (G-1) at (0.0, 0) [shape = circle, draw] {}; 
\node[vertex] (G-2) at (1.5, 0) [shape = circle, draw] {}; 
\node[vertex] (G-5) at (6.0, 0) [shape = circle, draw] {}; 
\node[vertex] (G-3) at (3.0, 0) [shape = circle, draw] {}; 
\node[vertex] (G-4) at (4.5, 0) [shape = circle, draw] {};
\draw[] (G-5) .. controls +(0.5, -0.5) and +(-0.5, -0.5) .. (G-6);
\draw[] (G-3) .. controls +(0.5, -0.5) and +(-0.5, -0.5) .. (G-4); 
\draw[] (G-1) .. controls +(0.5, -0.5) and +(-0.5, -0.5) .. (G-2); 
\end{tikzpicture}
\end{gathered}
\qquad \quad
\begin{gathered}
\begin{tikzpicture}[scale = 0.35,thick, baseline={(0,1ex)}] 
\tikzstyle{vertex} = [shape = circle, minimum size = 4pt,
    inner sep = 1pt, fill=black] 
\node[vertex] (G-6) at (7.5, 0) [shape = circle, draw] {}; 
\node[vertex] (G-1) at (0.0, 0) [shape = circle, draw] {}; 
\node[vertex] (G-2) at (1.5, 0) [shape = circle, draw] {}; 
\node[vertex] (G-5) at (6.0, 0) [shape = circle, draw] {}; 
\node[vertex] (G-3) at (3.0, 0) [shape = circle, draw] {}; 
\node[vertex] (G-4) at (4.5, 0) [shape = circle, draw] {};
\draw[] (G-2) .. controls +(-0.5, 0.5) and +(0.5, 0.5) .. (G-1);
\draw[] (G-5) .. controls +(-0.5, 0.5) and +(0.5, 0.5) .. (G-4); 
\draw[] (G-3) -- (3.0,1);
\draw[] (G-6) -- (7.5,1);
\end{tikzpicture}
  \\
\begin{tikzpicture}[scale = 0.35,thick, baseline={(0,-1ex)}] 
\tikzstyle{vertex} = [shape = circle, minimum size = 4pt,
    inner sep = 1pt, fill=black] 
\node[vertex] (G-6) at (7.5, 0) [shape = circle, draw] {}; 
\node[vertex] (G-1) at (0.0, 0) [shape = circle, draw] {}; 
\node[vertex] (G-2) at (1.5, 0) [shape = circle, draw] {}; 
\node[vertex] (G-5) at (6.0, 0) [shape = circle, draw] {}; 
\node[vertex] (G-3) at (3.0, 0) [shape = circle, draw] {}; 
\node[vertex] (G-4) at (4.5, 0) [shape = circle, draw] {};
\draw[] (G-5) -- (6.0,-1);
\draw[] (G-6) -- (7.5,-1);
\draw[] (G-3) .. controls +(0.5, -0.5) and +(-0.5, -0.5) .. (G-4); 
\draw[] (G-1) .. controls +(0.5, -0.5) and +(-0.5, -0.5) .. (G-2); 
\end{tikzpicture}
\end{gathered}
\qquad \quad
\begin{gathered}
\begin{tikzpicture}[scale = 0.35,thick, baseline={(0,1ex)}] 
\tikzstyle{vertex} = [shape = circle, minimum size = 4pt,
    inner sep = 1pt, fill=black] 
\node[vertex] (G-6) at (7.5, 0) [shape = circle, draw] {}; 
\node[vertex] (G-1) at (0.0, 0) [shape = circle, draw] {}; 
\node[vertex] (G-2) at (1.5, 0) [shape = circle, draw] {}; 
\node[vertex] (G-5) at (6.0, 0) [shape = circle, draw] {}; 
\node[vertex] (G-3) at (3.0, 0) [shape = circle, draw] {}; 
\node[vertex] (G-4) at (4.5, 0) [shape = circle, draw] {};
\draw[] (G-6) .. controls +(-0.5, 0.5) and +(0.5, 0.5) .. (G-5);
\draw[] (G-4) -- (4.5,1);
\draw[] (G-3) -- (3.0,1);
\draw[] (G-2) .. controls +(-0.5, 0.5) and +(0.5, 0.5) .. (G-1); 
\end{tikzpicture}
  \\
\begin{tikzpicture}[scale = 0.35,thick, baseline={(0,-1ex)}] 
\tikzstyle{vertex} = [shape = circle, minimum size = 4pt,
    inner sep = 1pt, fill=black] 
\node[vertex] (G-6) at (7.5, 0) [shape = circle, draw] {}; 
\node[vertex] (G-1) at (0.0, 0) [shape = circle, draw] {}; 
\node[vertex] (G-2) at (1.5, 0) [shape = circle, draw] {}; 
\node[vertex] (G-5) at (6.0, 0) [shape = circle, draw] {}; 
\node[vertex] (G-3) at (3.0, 0) [shape = circle, draw] {}; 
\node[vertex] (G-4) at (4.5, 0) [shape = circle, draw] {};
\draw[] (G-5) -- (6.0,-1);
\draw[] (G-6) -- (7.5,-1);
\draw[] (G-3) .. controls +(0.5, -0.5) and +(-0.5, -0.5) .. (G-4); 
\draw[] (G-1) .. controls +(0.5, -0.5) and +(-0.5, -0.5) .. (G-2); 
\end{tikzpicture}
\end{gathered}
\]
depicted above. Notice that the first two configurations are defect
preserving, but the third is not.
\end{example}

If $n>0$ is even, the bilinear form $\varphi_0$ satisfies a special
property: $\varphi_0(h,h')$ is a positive power of $\delta$, for any
$h,h' \in M(0)$. Hence, if $\delta=0$ then $\varphi_0=0$. Further
analysis reveals the following.

\begin{thm}[\cite{GL:96}*{Cor.~(6.8)}]
  Let $\delta \in \Bbbk$ where $\Bbbk$ is a field. Then
  \[
  \Lambda_0 =
  \begin{cases}
    \Lambda \setminus \{0\} & \text{ if $n>0$ is even and
      $\delta=0$}, \\
    \Lambda & \text{ otherwise}
  \end{cases}
  \]
  gives the indexing set for the isomorphism classes of simple
  $\TL_n(\delta)$-modules.
\end{thm}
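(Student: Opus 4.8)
The plan is to reduce the statement to a computation of the set $\Lambda_0 = \{\lambda \in \Lambda \mid \varphi_\lambda \ne 0\}$, since the general theory of Graham--Lehrer recalled above already guarantees that $\{L(\lambda) \mid \lambda \in \Lambda_0\}$ is a complete irredundant list of simple $\TL_n(\delta)$-modules (here $H(\lambda) \cong W(\lambda)$ by Proposition~\ref{p:Hm}). Thus it suffices to decide, for each $\lambda = n-2p \in \Lambda$, whether the diagrammatic form $\varphi_\lambda$ on $H(\lambda)$ vanishes identically, and I would organize the argument according to the number of defects $n-2p$, i.e.\ according to whether $\lambda = 0$.

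First I would dispose of every $\lambda \ne 0$, equivalently every $\lambda \in \Lambda$ having at least one defect ($n-2p \ge 1$), by exhibiting a single pair $h, h' \in M(\lambda)$ with $\varphi_\lambda(h,h') = 1$. Take $h$ to be the half-diagram with links $\{1,2\}, \{3,4\}, \dots, \{2p-1,2p\}$ and defects at $2p+1, \dots, n$, and take $h'$ to be the \emph{shifted} half-diagram with links $\{2,3\}, \{4,5\}, \dots, \{2p,2p+1\}$ and defects at $1, 2p+2, \dots, n$; both are legal non-crossing half-diagrams precisely because $2p+1 \le n$. Tracing the strands of the stacked configuration $h^*|h'$, the top edges $\{2k-1,2k\}$ and bottom edges $\{2k,2k+1\}$ interleave into the single path $1{-}2{-}3{-}\cdots{-}(2p+1)$, producing one long propagating strand from the bottom defect at vertex $1$ to the top defect at vertex $2p+1$, while each remaining vertex $2p+2, \dots, n$ carries a vertical through-strand. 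Hence $h^*|h'$ has no loops and every defect is joined to a defect, so $\varphi_\lambda(h,h') = \delta^0 = 1 \ne 0$ regardless of $\delta$. This places $\lambda$ in $\Lambda_0$ for every $\lambda \ne 0$, and in particular for all $\lambda$ when $n$ is odd, since then $0 \notin \Lambda$.

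It remains to analyze $\lambda = 0$, which occurs only when $n > 0$ is even, so that $p = n/2$ and the elements of $M(0)$ are non-crossing perfect matchings with no defects. Here the defect-preserving condition is vacuous, and since neither half-diagram has a free end, the stack $h^*|h'$ is a disjoint union of closed loops for every $h,h' \in M(0)$; thus $\varphi_0(h,h') = \delta^N$ with $N \ge 1$, exactly as noted in the observation preceding the theorem. Consequently $\varphi_0 \equiv 0$ when $\delta = 0$, whereas choosing $h = h'$ gives $\varphi_0(h,h) = \delta^{n/2} \ne 0$ when $\delta \ne 0$. Therefore $0 \in \Lambda_0$ if and only if $\delta \ne 0$, while every other $\lambda$ always lies in $\Lambda_0$; combining this with the previous paragraph yields precisely the asserted description of $\Lambda_0$, and hence of the indexing set for the simple modules.

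I expect the only genuine work to be the strand-tracing verification in the second paragraph, namely confirming that the shifted pair $(h,h')$ produces no closed loops while preserving all defects. This is transparent from the alternating pattern: following a strand through the interleaved top and bottom links never returns it to its starting vertex, so no loop can close, and the two endpoints of the resulting path are genuine defects. Everything else is bookkeeping: the $\lambda = 0$ case is essentially handed to us by the loop-counting remark preceding the theorem, and the passage from $\Lambda_0$ to the classification of simple $\TL_n(\delta)$-modules is the cited result of \cite{GL:96}.
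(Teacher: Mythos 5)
Your proposal is correct, and it is genuinely more than what the paper does: the paper does not prove this theorem at all, but attributes it to Graham--Lehrer (\cite{GL:96}*{Cor.~(6.8)}), supplying only the easy observation that for $n>0$ even every value $\varphi_0(h,h')$ is a \emph{positive} power of $\delta$ (so $\varphi_0\equiv 0$ when $\delta=0$) and then saying ``further analysis reveals the following.'' You fill in exactly the ``further analysis'': the reduction to computing $\Lambda_0$ via the cellular machinery and Proposition~\ref{p:Hm} matches the paper's framework, but the non-vanishing statements are yours. The key step is your shifted pair $h$ (links $\{1,2\},\dots,\{2p-1,2p\}$) and $h'$ (links $\{2,3\},\dots,\{2p,2p+1\}$), whose stack $h^*|h'$ traces out a single path $1$--$2$--$\cdots$--$(2p+1)$ with vertical strands elsewhere, giving $\varphi_\lambda(h,h')=\delta^0=1$ for every $\lambda\ne 0$; this is the right trick, and it is essential, since the naive diagonal value $\varphi_\lambda(h,h)=\delta^p$ tells you nothing when $\delta=0$. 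Together with the loop-count argument at $\lambda=0$ (at least one loop always, exactly $n/2$ loops on the diagonal), this pins down $\Lambda_0$ in all cases. What your route buys is a self-contained, field-independent, purely diagrammatic proof inside the survey's own formalism; what the paper's route buys is brevity and a pointer to the original source, at the cost of leaving the substantive computation to the reader. One small caveat: your argument, like the paper's exposition, takes on faith that the diagrammatically defined form $\varphi_\lambda$ agrees with the abstract cellular form of $H(\lambda)$ (the paper asserts this without proof just before the theorem), so strictly speaking that identification is the one remaining ingredient inherited from \cite{GL:96} rather than verified.
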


In particular, this shows that $\TL_n(0)$ is not semisimple over a
field whenever $n>0$ is even. The representation theory of $\TL_n(0)$
over $\C$ is of great interest in mathematical physics.

Corollary~\ref{c:ss-crit} gave a sufficient condition for
semisimplicity of $\TL_n(\delta)$ in the case when $\delta \ne 0$. The
more precise classication result is as follows.

\begin{thm}\label{t:ss-gen}
  Let $\Bbbk$ be a field, and fix $0 \ne v \in \Bbbk$. Set $\delta =
  \pm(v+v^{-1})$. Then:
  \begin{enumerate}
  \item If $\delta \ne 0$ then $\TL_n(\delta)$ is semisimple
    if and only if $[n]^!_v \ne 0$ in $\Bbbk$.
  \item If $\delta = 0$ then $\TL_n(0)$ is semisimple if and
    only if
    \[
    \begin{cases}
      \text{$n$ is odd} & \text{ if $\Bbbk$ has characteristic $0$}\\
      \text{$n \in \{1,3, \dots, 2p-1\}$} & \text{ if
        $\Bbbk$ has characteristic $p>0$}.
    \end{cases}
    \]
  \end{enumerate}
\end{thm}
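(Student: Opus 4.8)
The plan is to reduce the question to the non-degeneracy of the cell forms $\varphi_\lambda$ and then to evaluate their Gram determinants. By the Graham--Lehrer theory recalled above, $\TL_n(\delta)$ is split semisimple over the field $\Bbbk$ precisely when every cell module $H(\lambda)$, $\lambda\in\Lambda$, is simple, equivalently when every form $\varphi_\lambda$ is nondegenerate. Since $M(\lambda)$ is an explicit basis and $\varphi_\lambda$ is given by the diagrammatic rule displayed above, this amounts to deciding, for each $\lambda$, whether the Gram matrix $G_\lambda=\big(\varphi_\lambda(h,h')\big)_{h,h'\in M(\lambda)}$ is invertible. To compute $\det G_\lambda$ I would use the recursion furnished by the restriction $\TL_{n-1}\subset\TL_n$ and the Bratteli branching $\lambda\mapsto\{\lambda-1,\lambda+1\}$ of Figure~\ref{f:Bratteli} (deleting the last vertex sends a half-diagram with $\lambda$ defects to one with $\lambda\mp 1$), together with the conditional expectation of \eqref{e:diag-tr} to pass between adjacent levels. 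A standard calculation then yields the product formula
\[
  \det G_\lambda \;\doteq\; \prod_{j=1}^{p}\left(\frac{[\lambda+j+1]_v}{[j]_v}\right)^{\dim H(\lambda+2j)},
  \qquad p=\tfrac{n-\lambda}{2},
\]
where $\doteq$ denotes equality up to a unit of $\Bbbk$ and the apparent denominators cancel, so the right-hand side is a genuine element of $\Bbbk$; the base cases $\det G_n=1$ and $\det G_{n-2}=[n]_v$ fix the normalization. The whole theorem then reduces to reading off the zeros of these determinants.

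For part (a) we have $[2]_v=\delta\neq 0$, so sufficiency ($[n]_v^!\neq 0\Rightarrow$ semisimple) is exactly Corollary~\ref{c:ss-crit}. For necessity, assume $[n]_v^!=0$ and let $e$ be least with $[e]_v=0$, so $3\leq e\leq n$; writing $q=v^2$, this means $\Phi_e(q)=0$ (the case $q=1$, where $[m]_v=m$, is similar). I would then locate a $\lambda\in\Lambda$ for which the total multiplicity of $\Phi_e(q)$ in the product above is strictly positive: choosing $\lambda$ of the correct parity with $\lambda+2=e$ or $\lambda+3=e$ makes a numerator factor vanish, and when no denominator $[j]_v$ vanishes (i.e.\ $p<e$) one gets $\det G_\lambda=0$ outright; for larger $n$ one rewrites the product in cyclotomic factors $\Phi_d(q)$ and checks that the vanishing numerator survives. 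Either way some $\varphi_\lambda$ degenerates, so $\TL_n(\delta)$ is not semisimple.

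For part (b), $\delta=0$ forces $v^2=-1$, i.e.\ $q=-1$, so $[m]_v=0$ exactly for even $m$ (when $\operatorname{char}\Bbbk\neq 2$), and every Gram entry $\varphi_\lambda(h,h')=\delta^N$ lies in $\{0,1\}$, making each $\det G_\lambda$ an integer. If $n$ is even then $0\in\Lambda$ and $\varphi_0=0$, so $H(0)$ is not simple and $\TL_n(0)$ is not semisimple (consistent with $\Lambda_0=\Lambda\setminus\{0\}$). If $n$ is odd then every $\lambda\in\Lambda$ is odd, and specializing the product formula at $q=-1$ (the simple zeros $q+1$ in numerator and denominator cancel in pairs) leaves
\[
  \det G_\lambda \;\doteq\; \prod_{\substack{1\leq j\leq p\\ j\ \mathrm{even}}}\left(\frac{\lambda+j+1}{j}\right)^{\dim H(\lambda+2j)},
\]
a positive integer. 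Hence in characteristic $0$ every form is nondegenerate and $\TL_n(0)$ is semisimple for all odd $n$, while in characteristic $p$ the form degenerates iff $p$ divides this integer. A Kummer-type divisibility count over all odd $\lambda\leq n$ shows that the first divisibility appears exactly at $n=2p+1$, so $\TL_n(0)$ is semisimple iff $n\leq 2p-1$, i.e.\ $n\in\{1,3,\dots,2p-1\}$; the case $\operatorname{char}\Bbbk=2$ (where $v=1$, $q=1$, $[m]_v=m$) is handled identically.

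The engine throughout is the Gram determinant formula, and I expect the main obstacle to be arithmetic rather than structural. For the necessity half of (a) one must manage the cyclotomic cancellations so that the offending numerator is not killed by a vanishing denominator, and for (b) one must carry out the characteristic-$p$ divisibility analysis precisely enough to prove that no $\det G_\lambda$ is divisible by $p$ when $n\leq 2p-1$ while some $\det G_\lambda$ is when $n=2p+1$. It is exactly this threshold that produces the set $\{1,3,\dots,2p-1\}$, and getting the valuation bookkeeping right is the crux of the argument.
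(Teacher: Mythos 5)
First, a point of orientation: the paper does not actually prove this theorem. It is stated as a survey result, with part (a) attributed to \cite{Westbury}, part (b) to \cite{Martin} (see also \cite{Ridout-StAubin}), and a unified modern proof via Schur--Weyl duality and tilting modules cited as \cite{AST}*{Prop.~5.1}. Your strategy --- reduce semisimplicity to nondegeneracy of all cell forms $\varphi_\lambda$ via \cite{GL:96}, then evaluate Gram determinants --- is precisely the classical route of the cited references, not a new one; and to your credit, the product formula you quote for $\det G_\lambda$, its base cases, and the reduction of the sufficiency half of (a) to Corollary~\ref{c:ss-crit} are all correct.

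The difficulty is that the proposal has genuine gaps at exactly the load-bearing points, and you acknowledge as much. (i) The Gram determinant formula is asserted (``a standard calculation then yields''), not derived; the inductive computation via restriction along $\TL_{n-1}\subset\TL_n$ and the conditional expectation is the principal technical labor in \cite{Westbury} and \cite{Ridout-StAubin}, so it cannot be treated as a citation-free black box in a self-contained proof. (ii) In the necessity half of (a), the cyclotomic cancellation analysis is deferred (``one \dots checks that the vanishing numerator survives''), but this is the actual content: whenever the minimal $e$ with $[e]_v=0$ satisfies $p\ge e$ --- which is unavoidable for $n$ large or of opposite parity to $e$ --- vanishing denominators $[j]_v$ really do cancel vanishing numerators, and one must use the exponents $\dim H(\lambda+2j)$ to exhibit a $\lambda$ with strictly positive order of vanishing. (A cleaner repair: check directly that the one-link determinant $[e]_v$ vanishes for $\TL_e$ and the two-link determinant vanishes for $\TL_{e+1}$, then propagate non-semisimplicity upward in steps of two via the corner isomorphism $\TL_{m-2}\cong f\,\TL_m\,f$ with $f=\delta^{-1}e_{m-1}$ idempotent, which is legitimate here precisely because $\delta\ne 0$.) (iii) In part (b), ``the first divisibility appears exactly at $n=2p+1$'' does not deliver the stated ``if and only if'': you need some determinant to vanish mod $p$ for \emph{every} odd $n\ge 2p+1$, and the corner-algebra monotonicity just described is unavailable at $\delta=0$, since $e_{n-1}$ is then nilpotent rather than idempotent up to scalar. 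Concretely, at $n=2p+3$ the two-link module gives $\det G_{n-4}\doteq (n-1)/2=p+1$, prime to $p$, so one must pass to the three-link module (whose determinant is $\doteq p^{\dim H(n-2)}$); a different $\lambda$ is needed depending on $n$, and that case analysis is exactly the ``valuation bookkeeping'' you defer. In short: right skeleton, correct key formula, but the crux of both necessity arguments is named rather than carried out.
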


Part (a) goes back to \cite{Westbury}, while part (b) was proved in
\cite{Martin}; see also \cite{Ridout-StAubin}.  An easy but somewhat
more sophisticated recent proof (depending on Schur--Weyl duality and
the theory of tilting modules) covering all cases is given in
\cite{AST}*{Prop.~5.1}.

\begin{rmk}
The condition $[n]^!_v \ne 0$ in part (a) of Theorem~\ref{t:ss-gen} is
satisfied if and only if either:
\begin{enumerate}
\item [(i)] $v^2 \ne 1$ and if $v^2$ is an $r$th root of unity then $r>n$, or

\item [(ii)] $v^2 = 1$ and the characteristic of $\Bbbk$ is strictly greater
than $n$.
\end{enumerate}
%
\end{rmk}

When $\TL_n(\delta)$ isn't semisimple, its representations over a
field have been understood for a long time. The blocks are known, and
the structure of the indecomposable projective modules are known as
well \cites{GHJ,Martin,GW}; see also \cite{Ridout-StAubin}.

\section{Schur--Weyl duality}\label{s:SWD}\noindent
Let $\Bbbk$ be a field in this section. Fix $0 \ne v$ in $\Bbbk$.  We
consider tensor space $V^{\otimes n}$, where $V=V(1)$ is the
$2$-dimensional ``vector'' representation of $\UU(\gl_2)$. By
restriction, $V^{\otimes n}$ is a representation of $\UU(\fsl_2)$. In
this section, we will show that if $\delta = \pm(v+v^{-1})$ then
$\TL_n(\delta) \cong \End_\UU(V^{\otimes n})$ for $\UU = \UU(\gl_2)$
or $\UU(\fsl_2)$. More generally, we show that $V^{\otimes n}$
satisfies Schur--Weyl duality with respect to the commuting actions of
$\UU$ and $\TL_n(\delta)$, where again $\UU = \UU(\gl_2)$ or
$\UU(\fsl_2)$.  The discussion breaks into cases, depending whether
$v$ is or is not a root of unity; we consider the latter case first.

\begin{rmk}
The fact that $\TL_n(\delta)$ is obtained as the centralizer of either
$\UU(\gl_2)$ or $\UU(\fsl_2)$ is a special feature of the vector
representation.  In other closely related situations, for instance if
$V$ is replaced by $V \oplus \Bbbk$, as in \cites{BH:Motzkin,DG:PTL},
the algebras $\End_{\UU(\fsl_2)}((V \oplus \Bbbk)^{\otimes n})$ and
$\End_{\UU(\gl_2)}((V \oplus \Bbbk)^{\otimes n})$ are very different
(indeed, they usually have different dimensions) even if $v$ is not a
root of unity. The former centralizer is the Motzkin algebra of
\cite{BH:Motzkin} while the latter is the partial Temperley--Lieb
algebra of \cite{DG:PTL}.
\end{rmk}

We now recall the definition of $\UU(\gl_2)$, assuming that $v$ is not
a root of unity in $\Bbbk$.
This is the associative $\Bbbk$-algebra with $1$ generated by symbols
$E$, $F$, $K_i^{\pm 1}$ ($i=1,2$) subject to the defining relations
\begin{subequations}\label{e:UU-gl}
\begin{alignat}{-1}
  & K_1K_2 = K_2K_1, && K_iK_i^{-1} = 1 = K_i^{-1}K_i \quad(i=1,2)\\
  & K_1 E K_1^{-1} = v E, && K_2 E K_2^{-1} = v^{-1} E \\
  & K_1 F K_1^{-1} = v^{-1} F, && K_2 F K_2^{-1} = v F \\
  & EF - FE = \frac{K-K^{-1}}{v-v^{-1}},\quad && \text{where $K:= K_1K_2^{-1}$}.
  \label{e:UU-gl-d}
\end{alignat}
\end{subequations}
The algebra $\UU(\gl_2)$ is a Hopf algebra with counit $\epsilon:
\UU(\gl_2) \to \Bbbk$ and coproduct $\Delta: \UU(\gl_2) \to \UU(\gl_2)
\otimes \UU(\gl_2)$ given on generators by
\begin{subequations}\label{e:coprod}
\begin{gather}
  \Delta(E) = E \otimes 1 + K \otimes E, \quad
  \Delta(F) = F \otimes K^{-1} + 1 \otimes F\\
  \Delta(K_i) = K_i \otimes K_i \quad (i=1,2)\\
    \epsilon(E) = \epsilon(F) = 0, \quad \epsilon(K_i) = 1 \quad(i=1,2).
\end{gather}
\end{subequations}
We omit the definition of the antipode as it isn't needed here.


The algebra $\UU(\fsl_2)$ is the subalgebra of $\UU(\gl_2)$ generated
by $E$, $F$, and $K^{\pm1}$. 
Its generators satisfy the defining relations
%
%
$K K^{-1} = 1 = K^{-1} K$, $K E K^{-1} = v^2 E$, $K F K^{-1} = v^{-2}
F$ along with relation~\eqref{e:UU-gl-d}. By restriction,
$\UU(\fsl_2)$ inherits a Hopf algebra structure from that on
$\UU(\gl_2)$.

Our conventions are the same as in \cite{Lusztig:89},
which slightly modified the original definitions of
e.g.~\cites{Drinfeld,Jimbo:85}.  By letting $v$ be an element of the
ground field instead of taking it to be an indeterminate, we are
following the approach given in the first few chapters of Jantzen's
book \cite{Jantzen}; the books \cites{Lusztig,Kassel} are also useful
general references.

From now on, $\UU = \UU(\gl_2)$ or $\UU(\fsl_2)$. By a $\UU$-module we
always mean a type $\mathbf{1}$ $\UU$-module; see
\cite{Jantzen}*{5.3}.  A vector $m$ in a $\UU$-module $M$ is said to
be a weight vector of weight $w \in \Z$ if $K m = v^w m$. Let $V(n)$
be the simple $\UU$-module of dimension $n+1$, with standard basis
$x_i$ ($i=0,\dots, n$) of weight vectors such that
\begin{equation}
  K x_i = v^{n-2i} x_i,\quad
  F x_i = x_{i+1} \text{ if $i<n$},\quad \text{and} \quad
  Fx_n = 0.
\end{equation}
See \cite{Jantzen}*{2.6} or \cite{Kassel}*{Thm.~VI.3.5} for further
details. In particular, $V := V(1)$ is the natural (or ``vector'')
module. We make $\UU$ act on $V^{\otimes n}$ by means of the iterated
coproduct $\Delta^{(n)}: \UU \to \UU^{\otimes n}$, defined inductively
by
\[
\Delta^{(2)} = \Delta, \qquad \Delta^{(k+1)} = (\Delta \otimes
1^{\otimes (k-1)}) \Delta^{(k)} \text{ if } k \ge 2.
\]
Thus, $V^{\otimes n}$ is a $\UU$-module. Since $v$ is not a root of
unity, it is a semisimple $\UU$-module. (See e.g.~\cite{Jantzen}*{5.17
  and 6.26} or \cite{Kassel}*{Thm.~VII.2.2}.)

In order to define an action of $\TL_n(\delta)$ on $V^{\otimes n}$,
consider the linear map $\xi: V \otimes V \to V\otimes V$ defined on
basis elements by
\[
x_{0,0} \mapsto 0, \quad x_{0,1} \mapsto v^{-1} x_{0,1} - x_{1,0}, \quad
x_{1,0} \mapsto - x_{0,1} + v x_{1,0}, \quad x_{1,1} \mapsto 0
\]
where we write $x_{i,j} := x_i \otimes x_j$ to simplify notation. Then
$\xi^2 = (v+v^{-1}) \xi$, so $(\pm \xi)^2 = \pm(v+v^{-1}) (\pm \xi)$.
If $\delta = \pm(v+v^{-1})$, let $e_i$ in $\TL_n = \TL_n(\delta)$ act
on $V^{\otimes n}$ as the linear map
\begin{equation}\label{e:e_i}
  e_i = 1^{\otimes i-1} \otimes (\pm \xi) \otimes 1^{\otimes
    n-i-1}
\end{equation}
with $\pm \xi$ operating on the copy of $V \otimes V$ embedded in
tensor positions $i,i+1$. It turns out that $\xi$ is a $\UU$-module
endomorphism of $V \otimes V$, so it follows that $e_i$ is a
$\UU$-module endomorphism of $V^{\otimes n}$.

\begin{lem}\label{l:TL-action}
The $e_i$ ($i=1,\dots, n-1$ defined by \eqref{e:e_i} satisfy the
defining relations of $\TL_n(\delta)$, with $\delta= \pm(v+v^{-1})$.
So we have defined an action of $\TL_n(\delta)$ on $V^{\otimes n}$,
and this action commutes with the action of $\UU$. 
\end{lem}

\begin{proof}
The proof is by a tedious yet elementary calculation, which we omit.
(It suffices to check one of the sign choices for $\delta$, thanks to
the isomorphism $\TL_n(\delta) \cong \TL_n(-\delta)$.)  That the
action commutes with the action of $\UU$ follows from the fact that
the $e_i$ are $\UU$-module morphisms.
\end{proof}

\begin{rmk}\label{r:orthog}
%
(i) The operator $\xi$ may be derived from the $R$-matrix formalism,
  following Jimbo \cite{Jimbo}.

(ii) The standard inner product on $V$ (defined by $\bil{x_i}{x_j} =
\delta_{i,j}$) extends to $V \otimes V$ by setting
$\bil{x_{i,j}}{x_{k,l}} = \delta_{i,k}\delta_{j,l}$.  The orthogonal
projection of $V\otimes V$ onto the line spanned by $z_0:= x_{0,1} - v
x_{1,0}$ is given by the matrix
\[
P = \frac{1}{v+v^{-1}} \,
\begin{bmatrix}
  0&0&0&0\\
  0 & v^{-1} & -1 & 0\\
  0 & -1 & v & 0\\
  0&0&0&0
\end{bmatrix}
\]
with respect to the ordered basis $x_{0,0}$, $x_{0,1}$, $x_{1,0}$,
$x_{1,1}$.  (Note that $v+v^{-1} \ne 0$, since $v$ is not a root of
unity.) So $\xi = (v+v^{-1}) P$, where we identify matrices with the
linear maps they define, as usual. The image of $\xi$ is $\Bbbk z_0
\cong V(0)$ and its kernel is the $\Bbbk$-span of $z_1:= x_{0,0}$,
$z_2:= x_{0,1} + v x_{1,0}$, and $z_3:= x_{1,1}$. This span is
isomorphic to $V(2)$, so we have $V \otimes V \cong V(0) \oplus V(2)$.
The direct summands are orthogonal.
\end{rmk}

If $v$ is transcendental, the next result may be deduced from 
\cite{Jimbo} with some work, using the results of Section
\ref{s:Hecke} and the notation of Remark~\ref{r:HH2}. In Jimbo's
result, the algebra $\HH_n(v^{-1},-v)$ acts (usually non-faithfully)
on $V^{\otimes n}$, and by passing to the corresponding quotient by
the kernel of that action one obtains a faithful action of
$\TL_n(\delta)$.  As \cite{Jimbo} did not include a proof, we will
sketch a proof in a slightly more general context.  First, we recall
the notation
\[
\Lambda(n) = \{n, n-2, \dots, n-2l\}, \quad\text{where } l = \lfloor n/2
\rfloor
\]
from the preceding section.

\begin{thm}[Schur--Weyl duality if $v$ is not a root of unity]\label{t:SWD}
Let $\Bbbk$ be a field, and $0 \ne v \in \Bbbk$. Assume that $v$ is
not a root of unity. Let $\delta = \pm(v+v^{-1})$. Then the above
action of $\TL_n(\delta)$ on $V^{\otimes n}$ commutes with the action
of $\UU = \UU(\gl_2)$ or $\UU(\fsl_2)$, and these commuting actions
induce algebra surjections
\[
\UU \to \End_{\TL_n(\delta)}(V^{\otimes n}), \quad
\TL_n(\delta) \to \End_{\UU}(V^{\otimes n})
\]
the second of which is actually an isomorphism.  Furthermore,
\[
V^{\otimes n} \cong \textstyle \bigoplus_{k \in \Lambda(n)} V(k) \otimes H(k)
\]
is a decomposition into simple $\UU \otimes \TL_n(\delta)$-modules.
\end{thm}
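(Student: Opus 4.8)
The plan is to obtain everything from three ingredients: the quantum Clebsch--Gordan rule, a dimension count against the cellular structure of $\TL_n(\delta)$, and the classical double centralizer theorem. Together these reduce the statement to a single identification of multiplicity spaces, which is where the real work lies.

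First I would record that the two actions commute. By construction each $e_i$ equals $\pm(v+v^{-1})$ times $1^{\otimes i-1} \otimes u \otimes 1^{\otimes n-i-1}$, where $u$ is the projection of $V \otimes V$ onto the summand $V(0)$ of \eqref{e:VV-decomp}; since that is a decomposition of $\UU$-modules, $u$ and hence each $e_i$ is a $\UU$-endomorphism, so $\rho(\TL_n(\delta)) \subseteq \End_\UU(V^{\otimes n})$. Write $W = V^{\otimes n}$, let $\mathcal{A} \subseteq \End_\Bbbk(W)$ be the image of $\UU$, and $\mathcal{B} = \End_\UU(W) = \End_{\mathcal{A}}(W)$. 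Because $v$ is not a root of unity, $W$ is a semisimple $\UU$-module, so $\mathcal{A}$ is semisimple and the double centralizer theorem gives $\mathcal{A} = \End_{\mathcal{B}}(W)$; thus the map $\UU \to \End_{\TL_n}(W)$ is surjective as soon as $\rho(\TL_n) = \mathcal{B}$.

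To see the dimensions match, I would iterate \eqref{e:VV-decomp}: the multiplicity of $V(k)$ in $W$ is the number of Bratteli paths (Figure~\ref{f:Bratteli}) from $\emptyset$ to the two-part partition indexed by $k$, which by Lemma~\ref{l:bijections} equals $\dim H(k)$. Hence
\[
\dim_\Bbbk \mathcal{B} = \sum_{k \in \Lambda(n)} (\dim H(k))^2 = \sum_{\lambda \in \Lambda} |M(\lambda)|^2 = \dim_\Bbbk \TL_n(\delta),
\]
using $\dim H(\lambda) = |M(\lambda)|$, the fact that the $n$-diagrams $C^\lambda_{s,t}$ form a basis, and $\Lambda = \Lambda(n)$. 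As $v$ is not a root of unity we also have $[n]_v^! \ne 0$, so $\TL_n(\delta)$ is split semisimple by Corollary~\ref{c:ss-crit}, with simple modules the cell modules $\{H(\lambda) : \lambda \in \Lambda\}$. Since $\rho(\TL_n) \subseteq \mathcal{B}$ with equal dimensions, $\rho$ is injective if and only if it is surjective, and either one yields $\rho(\TL_n) = \mathcal{B} = \End_\UU(W)$ together with the isomorphism claimed for the second map.

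What remains — the main obstacle — is the bimodule decomposition. The double centralizer theorem already supplies a $\UU \otimes \mathcal{B}$-decomposition $W \cong \bigoplus_{k \in \Lambda(n)} V(k) \otimes N_k$ with $N_k = \Hom_\UU(V(k), W)$ of dimension $\dim H(k)$, so it suffices to prove $N_k \cong H(k)$ as a $\TL_n$-module; this both gives the asserted decomposition and shows every simple $H(\lambda)$ occurs in $W$, supplying the faithfulness that closes the dimension argument above. I would identify $N_k$ with the highest-weight space $\{w \in W : Ew = 0,\ Kw = v^k w\}$ and exhibit an explicit basis indexed by $M(k)$: to a half-diagram with $k = n-2p$ defects assign the tensor inserting the singlet $z_0 = x_0 \otimes x_1 - v x_1 \otimes x_0$ across each (necessarily non-crossing) link and $x_0$ at each defect. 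One checks these are highest-weight vectors and linearly independent, hence a basis by the count above, and that each $e_i$ acts on them precisely by the diagrammatic rule \eqref{e:quo-act} defining $H(k)$; the only local computations required are $u(z_0) = z_0$ and the reconnection of a cap with a cup, both read off from the matrix $\dot u$. Verifying that singlet insertion intertwines the $e_i$-action with the cell-module action is the one genuinely laborious point; everything preceding it is formal.
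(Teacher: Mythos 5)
Your proposal is correct, and in one important respect it is more complete than the paper's own argument, which is explicitly only a sketch. Both proofs share the same skeleton: commutativity of the two actions, the Clebsch--Gordan/Bratteli count showing that the multiplicity of $V(k)$ in $V^{\otimes n}$ equals $\dim H(k)$ (via Lemma~\ref{l:bijections}), the resulting equality $\dim_\Bbbk \End_\UU(V^{\otimes n}) = \dim_\Bbbk \TL_n(\delta)$, and Jacobson density applied to the semisimple module $V^{\otimes n}$ to get surjectivity of $\UU \to \End_{\TL_n(\delta)}(V^{\otimes n})$. The divergence is at the step you call the main obstacle. The paper asserts that equality of dimensions makes the map \eqref{e:induced-map} an isomorphism; as you correctly note, equal dimensions give nothing without injectivity or surjectivity of $\rho$, and the paper's proof never addresses faithfulness of the $\TL_n(\delta)$-action. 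Your singlet-insertion construction --- realizing each cell module $H(\lambda)$ inside $V^{\otimes n}$ as the highest-weight space, so that every simple module of the split semisimple algebra $\TL_n(\delta)$ (Corollary~\ref{c:ss-crit}) occurs in tensor space and hence no block can lie in $\ker\rho$ --- supplies exactly the missing injectivity, and simultaneously identifies the multiplicity spaces, yielding the bimodule decomposition that the paper dispatches as ``a standard consequence of semisimplicity.'' Note also the reversal of logical order: the paper deduces $\mathrm{Max}(\lambda) \cong H(\lambda)$ as a corollary \emph{of} the theorem, whereas you prove this identification first and derive the theorem from it; this is essentially the route of \cite{DG:orthog} that the paper mentions in a later remark. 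The cost of your route is the laborious local verification that singlet insertion intertwines the $e_i$-action with the diagrammatic rule \eqref{e:quo-act} (including care with the sign choice in $\delta = \pm(v+v^{-1})$, handled by the isomorphism $\TL_n(\delta) \cong \TL_n(-\delta)$); the benefit is a genuinely complete proof where the paper's has an unjustified inference.
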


\begin{proof}
By Lemma~\ref{l:TL-action}, the actions of $\UU$ and $\TL_n(\delta)$
commute. The action of $\TL_n(\delta)$ induces a representation
(algebra morphism)
\begin{equation}\label{e:induced-map}
  \TL_n(\delta) \to \End_\UU(V^{\otimes n}).
\end{equation}
We claim that the dimensions of $\End_\UU(V^{\otimes n})$ and
$\TL_n(\delta)$ are equal, and thus \eqref{e:induced-map} is an
isomorphism.  Let $m_{n,\lambda}:= [V^{\otimes n}: V(\lambda)]$ be the
composition factor multiplicity of $V(\lambda)$ in $V^{\otimes n}$, as
$\UU$-modules. Then it suffices to show that
\[
m_{n,\lambda} = \dim_\Bbbk H(\lambda)
\quad \text{for each $\lambda \in \Lambda(n)$}.
\]
Applying the quantum Clebsch--Gordon rule (see \cite{Jimbo:85} or
\cite{Kassel}*{\S VII.7}), for any $\lambda$ in $\Lambda(k-1)$ we have
\[
V(\lambda)\otimes V = V(\lambda)\otimes V(1) \cong
\begin{cases}
  V(\lambda+1) \oplus V(\lambda-1) & \text{ if } \lambda > 0\\
  V(\lambda+1) = V(1) & \text{ if } \lambda=0.
\end{cases}
\]
It follows that the multiplicity $m_{n,\lambda}$ is equal to the
number of paths from $\emptyset$ to $(n-p,p)$ in the Bratteli diagram
(see Figure~\ref{f:Bratteli}), where $\lambda = n-2p$.  By
Lemma~\ref{l:bijections}, $m_{n,\lambda}$ is the number of
half-diagrams on $n$ nodes. This implies the claim.

The final step is to apply Jacobson's density theorem
\cites{Jacobson,Lang} to conclude that $V^{\otimes n}$
satisfies the double-centralizer property as a $\UU$-module. In other
words, the natural map
\[
\UU \to \End_{\TL_n(\delta)}(V^{\otimes n}) \quad \text{is surjective}.
\]
Here we identify $\TL_n(\delta)$ with $\End_\UU(V^{\otimes n})$ by
means of the isomorphism in \eqref{e:induced-map}.  This completes the
proof of both surjectivity statements in Theorem~\ref{t:SWD}. The
final claim in Theorem~\ref{t:SWD} is a standard consequence of the
semisimplicity of $V^{\otimes n}$, so the proof is complete.
\end{proof}

\begin{rmk}
If $[n]_v^! \ne 0$ and $\delta = \pm(v+v^{-1})\ne 0$, the paper
\cite{DG:orthog} replaces $\UU$ by the quantum Schur algebra
$S_v(2,n)$ of \cite{DJ3} in homogeneous degree $n$ (which acts
faithfully) and constructs an orthogonal basis of maximal vectors
(with respect to the induced inner product on tensor space). This
gives an orthogonal decomposition of tensor space $V^{\otimes n}$ as
$S_v(2,n)$-modules.  This leads to a combinatorial proof of a
slightly stronger version of Theorem~\ref{t:SWD} in which the
hypothesis that $v$ is not a root of unity is weakened to the
condition $[n]_v^! \ne 0$.
\end{rmk}

Recall that a nonzero vector in a $\UU$-module is \emph{maximal} if it
is killed by the generator $E$.  We have the following consequence of
Theorem~\ref{t:SWD}, which realizes the simple $\TL_n(\delta)$-modules
in tensor space.

\begin{cor}
  Suppose that $0 \ne v \in \Bbbk$ is not a root of unity.  Let
  $\mathrm{Max}(\lambda)$ be the space of all maximal vectors of
  weight $\lambda$ in $V^{\otimes n}$, for $\lambda \in
  \Lambda(n)$. Then $\mathrm{Max}(\lambda) \cong H(\lambda)$, as
  $\TL_n(\delta)$-modules, where $\delta = \pm(v+v^{-1})$.
\end{cor}

\begin{proof}
The action of any $e_i$ preserves the weight of a weight vector, so
$\mathrm{Max}(\lambda)$ is a $\TL_n(\delta)$-submodule.  Each nonzero
vector in $\mathrm{Max}(\lambda)$ generates a copy of $V(\lambda)$ in
$V^{\otimes n}$, as $\UU$-modules, so $\Hom_\UU(V^{\otimes n},
V(\lambda)) \cong \mathrm{Max}(\lambda)$, as $\TL_n(\delta)$-modules.
But also $\Hom_\UU(V^{\otimes n}, V(\lambda)) \cong H(\lambda)$
follows from the last statement in Theorem~\ref{t:SWD}.
\end{proof}

\begin{rmk}
Takeuchi \cite{Takeuchi} introduced a two-parameter version
$\UU_{q_1,q_2}(\gl_m)$ of the quantized enveloping algebra of
$\gl_m$. Under suitable hypotheses, Benkart and Witherspoon \cite{BW}
extended Jimbo's Schur--Weyl duality to commuting actions of
$\UU_{q_1,q_2}(\gl_m)$ and $\HH_n(q_1,q_2)$ on tensor space, where here
$\HH_n(q_1,q_2)$ is the two-parameter version on the Iwahori--Hecke
algebra defined in Remark~\ref{r:HH2}.
\end{rmk}

By excluding roots of unity, Theorem~\ref{t:SWD} does not apply to the
important case $\delta=0$, which corresponds to $v = \pm \sqrt{-1}$ in
$\Bbbk$.  However, Jimbo's Schur--Weyl duality was generalized to
arbitrary $v \ne 0$ in \cite{DPS:98}; see also \cite{Martin:92}. Thus,
the first statement in Theorem~\ref{t:SWD} holds more generally.  For
the sake of completeness, we explain how to derive this more general
statement from \cites{DPS:98,Haerterich}. For this it is necessary to
work with Lusztig's divided power form of the quantized enveloping
algebras, which we now define, following \cite{Jantzen}*{Chap.~11}.
Let $t$ be an indeterminate, $\ZZ = \Z[t,t^{-1}]$ the ring of integral
Laurent polynomials. The field of fractions of $\ZZ$ is $\Q(t)$.  Let
$\UU_{\Q(t)}(\gl_2)$ be the algebra over $\Q(t)$ defined by the
generators and relations \eqref{e:UU-gl} but with $v$ replaced by $t$.
Then $\UU_{\Q(t)}(\fsl_2)$ is the $\Q(t)$-subalgebra generated by $E$,
$F$ and $K$.  Let $\UU_\ZZ(\gl_2)$ (resp., $\UU_\ZZ(\fsl_2)$) be the
$\ZZ$-subalgebra of $\UU_{\Q(t)}(\gl_2)$ generated by the quantum
divided powers
\[
E^{(j)} := E^j/[j]^!_t, \quad F^{(j)} := F^j/[j]^!_t \qquad \text{(for
  $j \ge 0$)}
\]
and the $K_i^{\pm 1}$ for $i=1,2$ (resp., $K^{\pm1}$).  For any
commutative ring $\Bbbk$, fix an invertible element $v$ in
$\Bbbk$ and make $\Bbbk$ into an $\ZZ$-algebra via the ring morphism
sending $t^{\pm1} \mapsto v^{\pm1}$. Following Lusztig,
we define
\[
\UU(\gl_2) = \UU_\ZZ(\gl_2) \otimes_\ZZ \Bbbk \quad\text{and}\quad
\UU(\fsl_2) = \UU_\ZZ(\fsl_2) \otimes_\ZZ \Bbbk .
\]
These algebras are the divided power forms mentioned above. By a
standard abuse of notation, we identify any generator $g$ of $\UU_\ZZ$
with its image $g \otimes 1$ in $\UU$, for either version of $\UU$.
The Lusztig form makes sense at roots of unity, for instance if $v=1$.
The algebras $\UU_\ZZ(\gl_2)$, $\UU_\ZZ(\fsl_2)$ are Hopf algebras
under the natural restriction of the Hopf algebra maps, so
$\UU(\gl_2)$ and $\UU(\fsl_2)$ are also Hopf algebras.

If $v$ is not a root of unity then it
is well known that the versions of $\UU = \UU(\gl_2)$ or $\UU(\fsl_2)$
defined at the beginning of this section are isomorphic to their
Lusztig divided power forms, so the notation is unambiguous.

Now consider $V_{\Q(t)}(m)$, the simple highest weight module for
$\UU_{\Q(t)}$ of highest weight $m$.  Let $V_\ZZ(m)$ be the $\ZZ$-span
of the standard basis $\{x_i\}_{i=0}^m$ of $V_{\Q(t)}(m)$.  One easily
checks that this is an admissible lattice in $V_{\Q(t)}(m)$, that is, a
$\UU_\ZZ$-submodule which is free over $\ZZ$ and which is the direct
sum of its weight spaces, such that $V_{\Q(t)}(m) \cong V_{\ZZ}(m)
\otimes_\ZZ \Q(t)$. Then
\[
V(m) := V_{\ZZ}(m) \otimes_\ZZ \Bbbk
\]
is a $\UU$-module, where $\UU = \UU(\gl_2)$ or $\UU(\fsl_2)$. It is the
quantized Weyl module of highest weight $m$. In general it is no longer a
simple module.

From now on $\Bbbk$ is a field. For $\delta = \pm(v+v^{-1})$, the
action of $\TL_n(\delta)$ defined by \eqref{e:e_i} still makes sense
even if $v$ is a root of unity. If $v+v^{-1} \ne 0$ then $\xi$ is an
orthogonal projection, and $V \otimes V \cong V(0) \oplus V(2)$, and
both Weyl modules $V(0)$, $V(2)$ are simple, as in Remark
\ref{r:orthog}(ii).  However, if $v+v^{-1}=0$ then $V(2)$ is no longer
simple, as it has a simple submodule $\Bbbk z_2 \cong V(0)$ (notation
of Remark \ref{r:orthog}(ii)) and $V \otimes V$ is an indecomposable
tilting module \cite{Andersen} of highest weight $2$. In this case the
map $\xi$ in \eqref{e:e_i} satisfies $\xi^2=0$, the kernel of $\xi$ is
isomorphic to $V(2)$, and its image is isomorphic to $V(0)$.

\begin{thm}[Schur--Weyl duality, general case]
  Let $0 \ne v \in \Bbbk$ where $\Bbbk$ is a field.  Set $\delta =
  \pm(v+v^{-1})$. Let $V = V(1)$. Then for $\UU = \UU(\gl_2)$ or
  $\UU(\fsl_2)$, the commuting actions of $\UU$ and $\TL_n(\delta)$ on
  $V^{\otimes n}$ satisfy Schur--Weyl duality; that is, each of the
  induced algebra maps
  \[
  \UU \to \End_{\TL_n(\delta)}(V^{\otimes n}), \quad
  \TL_n(\delta) \to \End_{\UU}(V^{\otimes n})
  \]
  is surjective. Moreover, the latter map is an isomorphism.  
\end{thm}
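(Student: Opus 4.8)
The plan is to reduce the general statement to the generic case already proved in Theorem~\ref{t:SWD} by a base-change argument, exploiting the fact that $V_\xi^{\otimes n}$ is a tilting module. That the two actions commute is immediate: the operators $e_i$ are defined over $\ZZ$ and commute with $\UU_\ZZ$ on $V_\ZZ^{\otimes n}$, so after applying $-\otimes_\ZZ \Bbbk$ they commute with $\UU_\xi$ on $V_\xi^{\otimes n}$, and both maps in the statement are well defined.

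The structural input is that $V_\xi = V_\xi(1)$, being the two-dimensional natural module, is at once a Weyl module and a dual Weyl module, hence a tilting module for $\UU_\xi$; as tensor products of tilting modules are tilting, $V_\xi^{\otimes n}$ is tilting, equivalently a tilting module over the quantum Schur algebra $S_\xi(2,n)$ of \cite{DJ3} (the image of $\UU_\xi$ in $\End_\Bbbk(V_\xi^{\otimes n})$). The point of working with tilting modules is that the relevant $\operatorname{Ext}^1$ between Weyl and dual Weyl modules vanishes over $\ZZ$, which forces $\End_{\UU_\ZZ}(V_\ZZ^{\otimes n})$ to be free over $\ZZ$ and makes the formation of the endomorphism algebra \emph{compatible with base change}:
\[
\End_{\UU_\ZZ}(V_\ZZ^{\otimes n}) \otimes_\ZZ \Bbbk \;\cong\; \End_{\UU_\xi}(V_\xi^{\otimes n}).
\]
Over the fraction field $\Q(t)$ the parameter $t$ is not a root of unity, so Theorem~\ref{t:SWD} identifies the generic endomorphism algebra with $\TL_{n,\ZZ}\otimes_\ZZ\Q(t)$; hence $\End_{\UU_\ZZ}(V_\ZZ^{\otimes n})$ is free of rank $\Cat_{2n,n}$, and the displayed isomorphism shows $\dim_\Bbbk \End_{\UU_\xi}(V_\xi^{\otimes n}) = \Cat_{2n,n}$, with no jump at the special fibre.

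To obtain the isomorphism $\TL_n(\delta) \to \End_{\UU_\xi}(V_\xi^{\otimes n})$ I would examine the integral map $\iota\colon \TL_{n,\ZZ} \to \End_{\UU_\ZZ}(V_\ZZ^{\otimes n})$. It is injective, since its source is torsion-free and it becomes an isomorphism after $-\otimes_\ZZ\Q(t)$; by the integral quantum Schur--Weyl reciprocity of \cite{DPS:98} (the precise incarnation of the tilting formalism in this setting) it is in fact an isomorphism of free $\ZZ$-modules, both of rank $\Cat_{2n,n}$. Specializing $\iota$ along $\ZZ \to \Bbbk$ and invoking the displayed base-change isomorphism then yields that $\TL_n(\delta) \to \End_{\UU_\xi}(V_\xi^{\otimes n})$ is an isomorphism, which is the second assertion. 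Equivalently, since the two algebras already have equal dimension $\Cat_{2n,n}$, it suffices to know that this map is surjective.

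For the first map it remains to establish the double-centralizer property, which in the non-semisimple case does not follow from Jacobson density. Here one uses that the indecomposable tilting summands occurring in $V_\xi^{\otimes n}$ are precisely those indexed by the weights in $\Lambda(n)$, so that $V_\xi^{\otimes n}$ is a \emph{full} tilting module for the quasi-hereditary algebra $S_\xi(2,n)$; the double-centralizer theorem for full tilting modules then gives $S_\xi(2,n) = \End_{\TL_n(\delta)}(V_\xi^{\otimes n})$, i.e.\ $\UU_\xi \to \End_{\TL_n(\delta)}(V_\xi^{\otimes n})$ is surjective. The hardest part throughout is the tilting input itself: the base-change compatibility of $\End$ and the full-tilting double centralizer both rest on the good-filtration formalism (vanishing of $\operatorname{Ext}^1$ between Weyl and dual Weyl modules), and it is exactly this homological rigidity that replaces the semisimplicity used in the generic argument.
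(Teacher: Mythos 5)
There is a genuine gap, and it sits in your proof of the first surjection. The claim that the indecomposable tilting summands of $V_\xi^{\otimes n}$ are precisely the $T(\lambda)$ with $\lambda \in \Lambda(n)$ --- so that tensor space is a \emph{full} tilting module for $S_\xi(2,n)$ --- is false in general, and it fails exactly in the cases this theorem is meant to add to Theorem~\ref{t:SWD}. Take $n=2$ and $\xi = \sqrt{-1}$, so $\delta = \xi+\xi^{-1} = 0$. Then $\End_{\UU_\xi}(V_\xi^{\otimes 2}) \cong \TL_2(0) \cong \Bbbk[e_1]/(e_1^2)$ is a local algebra, so $V_\xi^{\otimes 2}$ is indecomposable: it equals $T(2)$, and the one-dimensional tilting module $T(0)$ is \emph{not} a direct summand. (This is the tilting-theoretic face of the fact that $\TL_n(0)$ is non-semisimple for even $n>0$.) Consequently the double-centralizer theorem for full tilting modules cannot be invoked; the double centralizer property does hold, but it requires an argument valid for non-full tilting modules --- which is precisely what the quantum Weyl reciprocity of \cite{DPS:98} supplies, and what the paper cites at this point. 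A related circularity: identifying ``the image of $\UU_\xi$ in $\End_\Bbbk(V_\xi^{\otimes n})$'' with the $q$-Schur algebra $S_\xi(2,n)$ of \cite{DJ3} is itself a theorem of Schur--Weyl type, not a definition you may use for free.

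Your treatment of the second map is much closer to correct: the integral form, the base-change isomorphism $\End_{\UU_\ZZ}(V_\ZZ^{\otimes n}) \otimes_\ZZ \Bbbk \cong \End_{\UU_\xi}(V_\xi^{\otimes n})$ via the good-filtration formalism, and the comparison with the generic case of Theorem~\ref{t:SWD} are all sound. But the crucial step is deferred to a reference that does not contain it: \cite{DPS:98} concerns Hecke algebras and quantum $\gl_n$ and says nothing about Temperley--Lieb algebras, so ``integral quantum Schur--Weyl reciprocity'' does not give that $\iota\colon \TL_{n,\ZZ} \to \End_{\UU_\ZZ}(V_\ZZ^{\otimes n})$ is an isomorphism; nor does injectivity plus equality of ranks, since over $\ZZ = \Z[t,t^{-1}]$ an injection of free modules of equal finite rank need not be onto (multiplication by $t-1$ on $\ZZ$). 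What is missing is exactly the passage from Hecke to Temperley--Lieb at the specialized parameter, i.e.\ that the kernel of the $\HH_n$-action on $V_\xi^{\otimes n}$ equals the kernel of $\psi_n$ from Corollary~\ref{c:TL-quo}. The paper closes this with \cite{Haerterich}: the dimension of the image of the Hecke action is independent of the specialization $t \mapsto \xi$, hence equals $\Cat_{2n,n} = \dim_\Bbbk \TL_n(\delta)$, identifying that image with $\TL_n(\delta)$. Alternatively, within your framework: the kernel generator $x_1$ of Corollary~\ref{c:TL-quo} acts as zero on $V_\ZZ^{\otimes n}$ (it does so after $\otimes_\ZZ \Q(t)$ by the generic theorem, and $V_\ZZ^{\otimes n}$ is $\ZZ$-free), so the specialized Hecke action factors through $\TL_n(\delta)$; combining this with the field-level surjectivity of the Hecke map from \cite{DPS:98} and your dimension count $\dim_\Bbbk \End_{\UU_\xi}(V_\xi^{\otimes n}) = \Cat_{2n,n}$ yields the isomorphism. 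With that repair the second assertion is proved; the first still needs a double-centralizer input that does not assume fullness.
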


\begin{proof}[Proof sketch]
By a special case of \cite{DPS:98}, there are commuting actions of
$\UU(\gl_2)$ and $\HH_n(-1,v^2)$ on $V^{\otimes n}$ which satisfy
Schur--Weyl duality, where we employ the two-parameter notation as in
Remark~\ref{r:HH2}. So the induced algebra maps
\[
\UU(\gl_2) \to \End_{\HH_n(-1,\,v^2)}(V^{\otimes n}), \quad
\HH_n(-1,v^2) \to \End_{\UU(\gl_2)}(V^{\otimes n})
\]
are both surjective. We need to argue that $\UU(\gl_2)$ may be
replaced by $\UU(\fsl_2)$ and $\HH_n(-1,v^2)$ by $\TL_n(\delta)$
without altering the statement. The first such replacement is trivial,
and follows from the fact that the images of $\UU(\fsl_2)$ and
$\UU(\gl_2)$ in $\End_\Bbbk(V^{\otimes n})$ are the same, because the
two algebras differ by a generator that acts as scalars on $V^{\otimes
  n}$.

The other replacement is not so trivial. First, argue that
$\HH_n(-1,v^2)$ may be replaced by $\HH_n(-v^{-1},v)$; see Remark
\ref{r:HH2}. Then apply \cite{Haerterich} to see that the dimensions
of the kernel and image of the Hecke algebra action is invariant
regardless of the choice of field or specialization $t \mapsto v$.
It follows that the image of that action is isomorphic to
$\TL_n(\delta)$. The result now follows by Corollary~\ref{c:TL-quo}.
\end{proof}

\begin{rmk}
If we let $\Bbbk=\C$ and take $v = \pm \sqrt{-1}$ then we obtain a
version of Schur--Weyl duality that applies to $\TL_n(0)$, showing in
particular that $\TL_n(0) \cong \End_{\UU}(V^{\otimes n})$. The same
remark applies more generally in any field containing a square root of
$-1$.
\end{rmk}

\subsection*{Acknowledgments}
The authors are grateful to Fred Goodman and the anonymous referees
for helpful comments and suggestions.

\begin{bibdiv}
  \begin{biblist}

\bib{Andersen}{article}{
   author={Andersen, Henning Haahr},
   title={Tensor products of quantized tilting modules},
   journal={Comm. Math. Phys.},
   volume={149},
   date={1992},
   number={1},
   pages={149--159},
}

\bib{AST}{article}{
   author={Andersen, Henning Haahr},
   author={Stroppel, Catharina},
   author={Tubbenhauer, Daniel},
   title={Cellular structures using $U_q$-tilting modules},
   journal={Pacific J. Math.},
   volume={292},
   date={2018},
   number={1},
   pages={21--59},
}

\bib{Baxter}{book}{
   author={Baxter, Rodney J.},
   title={Exactly solved models in statistical mechanics},
   publisher={Academic Press, Inc. [Harcourt Brace Jovanovich, Publishers],
   London},
   date={1982},
}


\bib{BH:Motzkin}{article}{
   author={Benkart, Georgia},
   author={Halverson, Tom},
   title={Motzkin algebras},
   journal={European J. Combin.},
   volume={36},
   date={2014},
   pages={473--502},
}

\bib{Benkart-Moon}{article}{
   author={Benkart, Georgia},
   author={Moon, Dongho},
   title={Tensor product representations of Temperley--Lieb algebras and
   Chebyshev polynomials},
   conference={
      title={Representations of algebras and related topics},
   },
   book={
      series={Fields Inst. Commun.},
      volume={45},
      publisher={Amer. Math. Soc., Providence, RI},
   },
   date={2005},
   pages={57--80},
}

\bib{BW}{article}{
  author={Benkart, Georgia}, author={Witherspoon, Sarah},
  title={Representations of two-parameter quantum groups and
    Schur--Weyl duality},
  conference={ title={Hopf algebras}, },
  book={
    series={Lecture Notes in Pure and Appl. Math.},
    volume={237},
    publisher={Dekker, New York},
  },
  date={2004},
  pages={65--92},
}

\bib{BFK}{article}{
   author={Bernstein, Joseph},
   author={Frenkel, Igor},
   author={Khovanov, Mikhail},
   title={A categorification of the Temperley-Lieb algebra and Schur
   quotients of $U(\germ{sl}_2)$ via projective and Zuckerman functors},
   journal={Selecta Math. (N.S.)},
   volume={5},
   date={1999},
   number={2},
   pages={199--241},
}

\bib{Bigelow}{article}{
  author={Bigelow, Stephen},
  title={Braid groups and Iwahori-Hecke algebras},
  conference={ title={Problems on mapping class groups and related topics},
  },
  book={
    series={Proc. Sympos. Pure Math.},
    volume={74},
    publisher={Amer. Math. Soc., Providence, RI}, },
  date={2006},
  pages={285--299},
}

\bib{BJS93}{article}{
   author={Billey, Sara C.},
   author={Jockusch, William},
   author={Stanley, Richard P.},
   title={Some combinatorial properties of Schubert polynomials},
   journal={J. Algebraic Combin.},
   volume={2},
   date={1993},
   number={4},
   pages={345--374},
}

\bib{Birman-Wenzl}{article}{
   author={Birman, Joan S.},
   author={Wenzl, Hans},
   title={Braids, link polynomials and a new algebra},
   journal={Trans. Amer. Math. Soc.},
   volume={313},
   date={1989},
   number={1},
   pages={249--273},
}

\bib{Bowman:book}{book}{
   author={Bowman, Chris},
   title={Diagrammatic algebra},
   series={Universitext},
   publisher={Springer, Cham},
   date={2025},
}

\bib{Bowman-et-al}{article}{
   author={Bowman, Chris},
   author={De Visscher, Maud},
   author={Farrell, Naimh},
   author={Hazi, Amit},
   author={Norton, Emily},
   title={Oriented Temperley--Lieb algebras and combinatorial
     Kazhdan--Lusztig theory},
   note={Published online by Cambridge Univ. Press: 14 January 2025},
   journal={Canad. J. Math.},
   date={2025},
   pages={1--43},
}

\bib{Brauer}{article}{
   author={Brauer, Richard},
   title={On algebras which are connected with the semisimple continuous
   groups},
   journal={Ann. of Math. (2)},
   volume={38},
   date={1937},
   number={4},
   pages={857--872},
}

\bib{DJ1}{article}{
   author={Dipper, Richard},
   author={James, Gordon},
   title={Representations of Hecke algebras of general linear groups},
   journal={Proc. London Math. Soc. (3)},
   volume={52},
   date={1986},
   number={1},
   pages={20--52},
}

\bib{DJ2}{article}{
   author={Dipper, Richard},
   author={James, Gordon},
   title={Blocks and idempotents of Hecke algebras of general linear groups},
   journal={Proc. London Math. Soc. (3)},
   volume={54},
   date={1987},
   number={1},
   pages={57--82},
}

\bib{DJ3}{article}{
   author={Dipper, Richard},
   author={James, Gordon},
   title={The $q$-Schur algebra},
   journal={Proc. London Math. Soc. (3)},
   volume={59},
   date={1989},
   number={1},
   pages={23--50},
}

\bib{DJ4}{article}{
   author={Dipper, Richard},
   author={James, Gordon},
   title={$q$-tensor space and $q$-Weyl modules},
   journal={Trans. Amer. Math. Soc.},
   volume={327},
   date={1991},
   number={1},
   pages={251--282},
}




\bib{DG:PTL}{article}{
   author={Doty, Stephen},
   author={Giaquinto, Anthony},
   title={The partial Temperley--Lieb algebra and its representations},
   journal={J. Comb. Algebra},
   volume={7},
   number={3/4},
   pages={401--439},  
   year={2023},
}

\bib{DG:orthog}{article}{
   author={Doty, Stephen},
   author={Giaquinto, Anthony},
   title={An orthogonal realization of representations of the
     Temperley--Lieb algebra},
   journal={J. Algebra},
   volume={655},
   pages={294--332},
   year={2024},
}

\bib{Drinfeld}{article}{
   author={Drinfel\cprime d, V. G.},
   title={Hopf algebras and the quantum Yang-Baxter equation},
   journal={Soviet Math. Dokl.},
   volume={32},
   date={1985},
   number={1},
   pages={254--258},
}


\bib{DPS:98}{article}{
   author={Du, Jie},
   author={Parshall, Brian},
   author={Scott, Leonard},
   title={Quantum Weyl reciprocity and tilting modules},
   journal={Comm. Math. Phys.},
   volume={195},
   date={1998},
   number={2},
   pages={321--352},
}

\bib{EK}{article}{
   author={Evans, David E.},
   author={Kawahigashi, Yasuyuki},
   title={Subfactors and mathematical physics},
   journal={Bull. Amer. Math. Soc. (N.S.)},
   volume={60},
   number={4},
   date={2023},
   pages={459--482},
}

\bib{Fan:95}{book}{
   author={Fan, C. Kenneth},
   title={A Hecke algebra quotient and properties of commutative elements of
   a Weyl group},
   note={Thesis (Ph.D.)--Massachusetts Institute of Technology},
   publisher={ProQuest LLC, Ann Arbor, MI},
   date={1995},
}

\bib{Fan:97}{article}{
   author={Fan, C. Kenneth},
   title={Structure of a Hecke algebra quotient},
   journal={J. Amer. Math. Soc.},
   volume={10},
   date={1997},
   number={1},
   pages={139--167},
}

\bib{FKS}{article}{
   author={Frenkel, Igor},
   author={Khovanov, Mikhail},
   author={Stroppel, Catharina},
   title={A categorification of finite-dimensional irreducible
   representations of quantum $\germ{sl}_2$ and their tensor products},
   journal={Selecta Math. (N.S.)},
   volume={12},
   date={2006},
   number={3-4},
   pages={379--431},
}


\bib{Fulton}{book}{
   author={Fulton, William},
   title={Young tableaux},
   series={London Mathematical Society Student Texts},
   volume={35},
   publisher={Cambridge University Press, Cambridge},
   date={1997},
}

\bib{GL:96}{article}{
   author={Graham, J. J.},
   author={Lehrer, G. I.},
   title={Cellular algebras},
   journal={Invent. Math.},
   volume={123},
   date={1996},
   number={1},
   pages={1--34},
}

\bib{GHJ}{book}{
   author={Goodman, Frederick M.},
   author={de la Harpe, Pierre},
   author={Jones, Vaughan F. R.},
   title={Coxeter graphs and towers of algebras},
   series={Mathematical Sciences Research Institute Publications},
   volume={14},
   publisher={Springer-Verlag, New York},
   date={1989},
}


\bib{GW}{article}{
   author={Goodman, Frederick M.},
   author={Wenzl, Hans},
   title={The Temperley-Lieb algebra at roots of unity},
   journal={Pacific J. Math.},
   volume={161},
   date={1993},
   number={2},
   pages={307--334},
}


\bib{Halverson-Ram}{article}{
   author={Halverson, Tom},
   author={Ram, Arun},
   title={Characters of algebras containing a Jones basic construction: the
   Temperley-Lieb, Okada, Brauer, and Birman-Wenzl algebras},
   journal={Adv. Math.},
   volume={116},
   date={1995},
   number={2},
   pages={263--321},
}

\bib{Haerterich}{article}{
   author={H\"{a}rterich, Martin},
   title={Murphy bases of generalized Temperley--Lieb algebras},
   journal={Arch. Math. (Basel)},
   volume={72},
   date={1999},
   number={5},
   pages={337--345},
}

\bib{Jacobson}{book}{
   author={Jacobson, Nathan},
   title={Basic algebra. II},
   publisher={W. H. Freeman and Co., San Francisco, Calif.},
   date={1980},
}

\bib{Jantzen}{book}{
   author={Jantzen, Jens Carsten},
   title={Lectures on quantum groups},
   series={Graduate Studies in Mathematics},
   volume={6},
   publisher={American Mathematical Society, Providence, RI},
   date={1996},
}

\bib{Jimbo:85}{article}{
   author={Jimbo, Michio},
   title={A $q$-difference analogue of $U({\germ g})$ and the Yang-Baxter
   equation},
   journal={Lett. Math. Phys.},
   volume={10},
   date={1985},
   number={1},
   pages={63--69},
}

\bib{Jimbo}{article}{
   author={Jimbo, Michio},
   title={A $q$-analogue of $U({\germ g}{\germ l}(N+1))$, Hecke algebra,
     and the Yang--Baxter equation},
   journal={Lett. Math. Phys.},
   volume={11},
   date={1986},
   number={3},
   pages={247--252},
}

\bib{Jones:83}{article}{
   author={Jones, V. F. R.},
   title={Index for subfactors},
   journal={Invent. Math.},
   volume={72},
   date={1983},
   number={1},
   pages={1--25},
}

\bib{Jones:85}{article}{
   author={Jones, V. F. R.},
   title={A polynomial invariant for knots via von Neumann algebras},
   journal={Bull. Amer. Math. Soc. (N.S.)},
   volume={12},
   date={1985},
   number={1},
   pages={103--111},
}

\bib{Jones:86}{article}{
   author={Jones, V. F. R.},
   title={Braid groups, Hecke algebras and type ${\rm II}_1$ factors},
   conference={
      title={Geometric methods in operator algebras},
      address={Kyoto},
      date={1983},
   },
   book={
      series={Pitman Res. Notes Math. Ser.},
      volume={123},
      publisher={Longman Sci. Tech., Harlow},
   },
   date={1986},
   pages={242--273},
}

\bib{Jones}{article}{
   author={Jones, V. F. R.},
   title={Hecke algebra representations of braid groups and link
   polynomials},
   journal={Ann. of Math. (2)},
   volume={126},
   date={1987},
   number={2},
   pages={335--388},
}

\bib{Jones:91}{book}{
   author={Jones, Vaughan F. R.},
   title={Subfactors and knots},
   series={CBMS Regional Conference Series in Mathematics},
   volume={80},
   publisher={
     American Mathematical Society, Providence, RI},
   date={1991},
}

\bib{Jones:09}{article}{
   author={Jones, Vaughan},
   title={On the origin and development of subfactors and quantum topology},
   journal={Bull. Amer. Math. Soc. (N.S.)},
   volume={46},
   date={2009},
   number={2},
   pages={309--326},
}

\bib{Kadison}{article}{
   author={Kadison, Lars},
   title={Algebraic aspects of the Jones basic construction},
   journal={C. R. Math. Rep. Acad. Sci. Canada},
   volume={15},
   date={1993},
   number={5},
   pages={223--228},
}
		

\bib{Kassel}{book}{
   author={Kassel, Christian},
   title={Quantum groups},
   series={Graduate Texts in Mathematics},
   volume={155},
   publisher={Springer-Verlag, New York},
   date={1995},
}

\bib{K:87}{article}{
   author={Kauffman, Louis H.},
   title={State models and the Jones polynomial},
   journal={Topology},
   volume={26},
   date={1987},
   number={3},
   pages={395--407},
}

\bib{K:88}{article}{
   author={Kauffman, Louis H.},
   title={Statistical mechanics and the Jones polynomial},
   conference={
      title={Braids},
      address={Santa Cruz, CA},
      date={1986},
   },
   book={
      series={Contemp. Math.},
      volume={78},
      publisher={Amer. Math. Soc., Providence, RI},
   },
   date={1988},
   pages={263--297},
}

\bib{K:90}{article}{
   author={Kauffman, Louis H.},
   title={An invariant of regular isotopy},
   journal={Trans. Amer. Math. Soc.},
   volume={318},
   date={1990},
   number={2},
   pages={417--471},
}

\bib{Kauffman}{book}{
   author={Kauffman, Louis H.},
   title={Knots and physics},
   series={Series on Knots and Everything},
   volume={53},
   edition={4},
   publisher={World Scientific Publishing Co. Pte. Ltd., Hackensack, NJ},
   date={2013},
}


\bib{Kauffman-Lins}{book}{
   author={Kauffman, Louis H.},
   author={Lins, S\'{o}stenes L.},
   title={Temperley--Lieb recoupling theory and invariants of $3$-manifolds},
   series={Annals of Mathematics Studies},
   volume={134},
   publisher={Princeton University Press, Princeton, NJ},
   date={1994},
}

\bib{KL:79}{article}{
   author={Kazhdan, David},
   author={Lusztig, George},
   title={Representations of Coxeter groups and Hecke algebras},
   journal={Invent. Math.},
   volume={53},
   date={1979},
   number={2},
   pages={165--184},
}

\bib{Lang}{book}{
   author={Lang, Serge},
   title={Algebra},
   series={Graduate Texts in Mathematics},
   volume={211},
   edition={3},
   publisher={Springer-Verlag, New York},
   date={2002},
}

\bib{Lusztig:89}{article}{
   author={Lusztig, G.},
   title={Modular representations and quantum groups},
   conference={
      title={Classical groups and related topics},
      address={Beijing},
      date={1987},
   },
   book={
      series={Contemp. Math.},
      volume={82},
      publisher={Amer. Math. Soc., Providence, RI},
   },
   date={1989},
   pages={59--77},
}


\bib{Lusztig}{book}{
   author={Lusztig, George},
   title={Introduction to quantum groups},
   series={Progress in Mathematics},
   volume={110},
   publisher={Birkh\"{a}user Boston, Inc., Boston, MA},
   date={1993},
}

\bib{Lu:Hecke}{book}{
   author={Lusztig, G.},
   title={Hecke algebras with unequal parameters},
   series={CRM Monograph Series},
   volume={18},
   publisher={American Mathematical Society, Providence, RI},
   date={2003},
}

\bib{Macd}{book}{
   author={Macdonald, I. G.},
   title={Symmetric functions and Hall polynomials},
   series={Oxford Mathematical Monographs},
   edition={2},
   publisher={The Clarendon Press, Oxford University Press, New York},
   date={1995},
}

\bib{Martin}{book}{
   author={Martin, Paul Purdon},
   title={Potts models and related problems in statistical mechanics},
   series={Series on Advances in Statistical Mechanics},
   volume={5},
   publisher={World Scientific Publishing Co., Inc., Teaneck, NJ},
   date={1991},
}

\bib{Martin:92}{article}{
   author={Martin, Paul Purdon},
   title={On Schur-Weyl duality, $A_n$ Hecke algebras and quantum ${\rm
   sl}(N)$ on $\bigotimes^{n+1}{\bf C}^N$},
   conference={
      title={Infinite analysis, Part A, B},
      address={Kyoto},
      date={1991},
   },
   book={
      series={Adv. Ser. Math. Phys.},
      volume={16},
      publisher={World Sci. Publ., River Edge, NJ},
   },
   date={1992},
   pages={645--673},
}


\bib{Murphy1}{article}{
   author={Murphy, G. E.},
   title={On the representation theory of the symmetric groups and
   associated Hecke algebras},
   journal={J. Algebra},
   volume={152},
   date={1992},
   number={2},
   pages={492--513},
}

\bib{Murphy2}{article}{
   author={Murphy, G. E.},
   title={The representations of Hecke algebras of type $A_n$},
   journal={J. Algebra},
   volume={173},
   date={1995},
   number={1},
   pages={97--121},
}



\bib{PS}{book}{
   author={Prasolov, V. V.},
   author={Sossinsky, A. B.},
   title={Knots, links, braids and 3-manifolds},
   series={Translations of Mathematical Monographs},
   volume={154},
   publisher={American Mathematical Society, Providence, RI},
   date={1997},
}

\bib{Ridout-StAubin}{article}{
   author={Ridout, David},
   author={Saint-Aubin, Yvan},
   title={Standard modules, induction and the structure of the
   Temperley-Lieb algebra},
   journal={Adv. Theor. Math. Phys.},
   volume={18},
   date={2014},
   number={5},
   pages={957--1041},
}


\bib{Stembridge}{article}{
   author={Stembridge, John R.},
   title={On the fully commutative elements of Coxeter groups},
   journal={J. Algebraic Combin.},
   volume={5},
   date={1996},
   number={4},
   pages={353--385},
}

\bib{Stroppel}{article}{
   author={Stroppel, Catharina},
   title={Categorification of the Temperley-Lieb category, tangles, and
   cobordisms via projective functors},
   journal={Duke Math. J.},
   volume={126},
   date={2005},
   number={3},
   pages={547--596},
}

\bib{Takeuchi}{article}{
   author={Takeuchi, Mitsuhiro},
   title={A two-parameter quantization of ${\rm GL}(n)$ (summary)},
   journal={Proc. Japan Acad. Ser. A Math. Sci.},
   volume={66},
   date={1990},
   number={5},
   pages={112--114},
}

\bib{TL}{article}{
   author={Temperley, H. N. V.},
   author={Lieb, E. H.},
   title={Relations between the ``percolation'' and ``colouring'' problem
   and other graph-theoretical problems associated with regular planar
   lattices: some exact results for the ``percolation'' problem},
   journal={Proc. Roy. Soc. London Ser. A},
   volume={322},
   date={1971},
   number={1549},
   pages={251--280},
}


\bib{Wenzl:annals}{article}{
   author={Wenzl, Hans},
   title={On the structure of Brauer's centralizer algebras},
   journal={Ann. of Math. (2)},
   volume={128},
   date={1988},
   number={1},
   pages={173--193},
}

\bib{Wenzl:93}{article}{
   author={Wenzl, Hans},
   title={Subfactors and invariants of $3$-manifolds},
   conference={
      title={Operator algebras, mathematical physics, and low-dimensional
      topology},
      address={Istanbul},
      date={1991},
   },
   book={
      series={Res. Notes Math.},
      volume={5},
      publisher={A K Peters, Wellesley, MA},
   },
   date={1993},
   pages={301--324},
}

\bib{Westbury}{article}{
   author={Westbury, B. W.},
   title={The representation theory of the Temperley-Lieb algebras},
   journal={Math. Z.},
   volume={219},
   date={1995},
   number={4},
   pages={539--565},
}


\end{biblist}
\end{bibdiv}

\end{document}